\newtheorem{theorem}{Theorem}[section]
\newtheorem{lemma}[theorem]{Lemma}
\theoremstyle{definition}
\newtheorem{definition}[theorem]{Definition}
\newtheorem{proposition}[theorem]{Proposition}
\newtheorem{corollary}[theorem]{Corollary}
\newtheorem{remark}[theorem]{Remark}
\numberwithin{equation}{section}
\newcommand{\spin}{\mathrm{Spin}}
\newcommand{\trace}{\mathrm{tr}}
\newcommand{\ricci}{\mathrm{Ric}}
\title{Non-Shrinking Ricci Solitons of cohomogeneity one from the quaternionic Hopf fibration}
\author{Hanci Chi}
\address{Department of Foundational Mathematics\\ Xi'an Jiaotong-Liverpool University\\ Suzhou 215123\\ China}
\email{hanci.chi@xjtlu.edu.cn}
\begin{document}
\maketitle
\begin{abstract}
We establish the existence of two 3-parameter families of non-Einstein, non-shrinking Ricci solitons: one on $\mathbb{H}^{m+1}$ and one on $\mathbb{HP}^{m+1}\backslash\{*\}$. Each family includes a continuous 1-parameter subfamily of asymptotically paraboloidal (non-collapsed) steady Ricci solitons, with the Jensen sphere as the base. Additionally, we extend this result by proving the existence of a 2-parameter family on $\mathbb{O}^2$, which contains a 1-parameter subfamily of asymptotically paraboloidal steady Ricci solitons based on the Bourguignon--Karcher sphere.
\end{abstract}

\let\thefootnote\relax\footnote{2020 Mathematics Subject Classification: 53E20 (primary) 53C25 (secondary).

Keywords: Ricci soliton, cohomogeneity one metric.

The author is supported by NSFC (No. 12071489, No. 12301078), the Natural Science Foundation of Jiangsu Province (BK-20220282), and XJTLU Research Development Funding (RDF-21-02-083).}

\section{Introduction}
A Riemannian manifold $(M,g)$ is a \emph{Ricci soliton} if for a vector field $X$ and a constant $\epsilon$, the equation
\begin{equation}
\label{eqn_RS}
\ricci+\frac{1}{2}\mathcal{L}_Xg+\frac{\epsilon}{2}g=0
\end{equation}
is satisfied. A Ricci soliton is a self-similar solution to the Ricci flow. Specifically, a Ricci soliton homothetically expands ($\epsilon>0$), shrinks ($\epsilon<0$) or remains steady ($\epsilon=0$) under the Ricci flow. If $X=\mathrm{grad}(f)$ for some potential function $f$, the Ricci soliton is called a \emph{gradient Ricci soliton}.
An Einstein manifold is automatically a Ricci soliton with a constant potential function. Ricci solitons are candidates for blow-ups of singularities of the Ricci flow. 

Many known Ricci solitons are of \emph{cohomogeneity one}, i.e., a Lie group acts isometrically on $(M,g)$ such that the principal orbit is of codimension one. 
Under such a symmetry condition, the Ricci soliton equation is reduced to a set of ODEs. The initial value problem for cohomogeneity one Einstein equation was systematically studied in \cite{eschenburg_initial_2000} and was later generalized to the cohomogeneity one Ricci soliton equation in \cite{buzano_initial_2011}. A general theory for the global existence problem is yet to be established. 

This paper investigates the existence of non-shrinking, non-Einstein Ricci solitons of cohomogeneity one. Previous examples include the Bryant soliton \cite{bryant_ricci_1992} on $\mathbb{R}^3$, which is rotationally symmetric and easily generalizes to $\mathbb{R}^n$. The Bryant soliton is extended to non-shrinking Ricci solitons on trivial vector bundles over multiple warped products in \cite{ivey_new_1994}, \cite{dancer_non-kahler_2009}, \cite{dancer_new_2009}, \cite{buzano_family_2015}, \cite{buzano_non-kahler_2015} and \cite{nienhaus_new_2023}. The existence of K\"ahler Ricci solitons on the canonical line bundle over $\mathbb{CP}^n$ ($n\geq 1$) was established in \cite{koiso1990rotationally}, \cite{Cao_Elliptic_1996} and \cite{feldman_rotationally_2003}. Such an example was further generalized in \cite{dancer2011ricci} to the total space of certain complex vector bundles over a product of Fano manifolds. Without the K\"ahler condition, it was shown independently in  \cite{stolarski_steady_2015}, \cite{appleton_family_2017}, and \cite{wink_cohomogeneity_2017} that a broader family of Ricci solitons exist on complex line bundles over $\mathbb{CP}^n$. It was further proved in \cite{appleton_family_2017} that some of these complex line bundles admit a non-collapsed steady soliton.

In \cite{wink_complete_2021} and \cite{wink_cohomogeneity_2017}, examples of non-shrinking Ricci solitons were found on cohomogeneity one vector bundles, whose principal orbits are Hopf fibrations. These solitons are of two summands, meaning their collapsing spheres and base spaces are irreducible. This paper generalizes the Wink solitons by considering the quaternionic Hopf fibration with three summands. In particular, the group triple under consideration is
\begin{equation}
\label{eqn_group triple}
(\mathsf{K},\mathsf{H},\mathsf{G})=(Sp(m)\triangle U(1),Sp(m)Sp(1)U(1),Sp(m+1)U(1)).
\end{equation}
The isotropy representation of $\mathsf{G}/\mathsf{K}$ splits into three irreducible summands, leading to a broader family of non-shrinking cohomogeneity one Ricci solitons on $\mathbb{HP}^{m+1}\backslash\{*\}$ and $\mathbb{H}^m$. Furthermore, this paper extends the existence results from \cite{chi2021einstein}, where the cohomogeneity one Einstein equation associated to \eqref{eqn_group triple} was examined. While the Einstein case exhibits various asymptotic behaviors, including asymptotically conical (AC), locally asymptotically conical (ALC), and asymptotically hyperbolic (AH), the newly discovered non-Einstein steady Ricci solitons are either asymptotically paraboloidal (non-collapsed) or asymptotically cigar-paraboloidal, as defined below.

\begin{definition}
Let $(M,g_M)$ be a Riemannian manifold of dimension $n+1$. Let $(N,g_N)$ be an $n$-dimensional Riemannian manifold and $(P(N),dt^2 +tg_N)$ be the metric paraboloid with base $N$. Let $\bullet$ denote the tip of the paraboloid. The manifold $M$ is \emph{asymptotically paraboloidal} (AP) if for some $p\in M$, we have $\lim\limits_{l\to\infty}((M, p), \frac{1}{l}g_M) = ((P(N),\bullet),dt^2+t g_N)$ in the pointed Gromov–Hausdorff sense.
\end{definition}

\begin{definition}
Let $(M,g_M)$ be a Riemannian manifold of dimension $n+2$. Let $(N,g_N)$ be an $n$-dimensional Riemannian manifold and $(P(N),dt^2 +tg_N)$ be the metric paraboloid with base $N$. Let $\bullet$ denote the tip of the paraboloid. The manifold $M$ is \emph{asymptotically cigar-paraboloidal} (ACP) if for some $p\in M$, we have $\lim\limits_{l\to\infty}((M, p), \frac{1}{l}g_M) = ((\hat{P}(N),\bullet),dt^2+Ads^2+t g_N)$ in the pointed Gromov–Hausdorff sense, where $\hat{P}(N)$ is a circle bundle over $P(N)$ and $A$ is a constant.
\end{definition}

Our main theorems are the following.
\begin{theorem}
\label{thm_1}
There exists a continuous 3-parameter family of complete $Sp(m+1)U(1)$-invariant Ricci solitons $\{\zeta(s_1,s_2,s_3,s_4)\mid (s_1,s_2,s_3,s_4)\in \mathbb{S}^3, s_1,s_4>0,s_2,s_3\geq 0\}$ on $\mathbb{HP}^{m+1}\backslash\{*\}$.
\begin{enumerate}
\item
Each $\zeta(s_1,s_2,0,s_4)$ is a non-Einstein steady Ricci soliton. The soliton is AP if $s_2=0$, with the base of the limit paraboloid being the Jensen $\mathbb{S}^{4m+3}$. The soliton is ACP if $s_2>0$, with the base of the limit paraboloid being the non-K\"ahler $\mathbb{CP}^{2m+1}$.
\item
Each $\zeta(s_1,s_2,s_3,s_4)$ with $s_3>0$ is an AC non-Einstein expanding Ricci soliton.
\end{enumerate}
\end{theorem}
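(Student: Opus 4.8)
The plan is to reduce the soliton equation \eqref{eqn_RS} to a first-order autonomous ODE system and to study that system as a flow on a compactified phase space. Writing the $Sp(m+1)U(1)$-invariant metric as $g=dt^2+f_1^2 g_1+f_2^2 g_2+f_3^2 g_3$ along the three irreducible isotropy summands of $\mathsf{G}/\mathsf{K}$ (of dimensions $1$, $2$, and $4m$, corresponding to the Hopf circle, the $\mathbb{S}^2$, and the $\mathbb{HP}^m$ base), and taking $X=\mathrm{grad}(\phi)$ for a radial potential $\phi=\phi(t)$, equation \eqref{eqn_RS} becomes a system in $(f_1,f_2,f_3,\dot f_1,\dot f_2,\dot f_3,\dot\phi)$ whose coefficients are the already-computed Ricci components for this triple. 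The gradient structure supplies the conservation law $R+|\mathrm{grad}(\phi)|^2+\epsilon\phi=\mathrm{const}$, which I would use both to eliminate one degree of freedom and to build the monotone quantities needed below.

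First I would solve the initial value problem at the singular orbit $\mathsf{G}/\mathsf{H}\cong\mathbb{HP}^m$. Smoothness of the metric as the normal $\mathbb{S}^3$ collapses forces the standard boundary conditions on the $f_i$ and their derivatives; the residual smooth initial data, after quotienting by the overall scaling symmetry of \eqref{eqn_RS}, is exactly parametrized by $(s_1,s_2,s_3,s_4)\in\mathbb{S}^3$. The constraints $s_1,s_4>0$ enforce non-degeneracy of the initial data, while $s_2,s_3\geq 0$ permit the degenerate limits that distinguish the asymptotic types: $s_2$ governs whether the Hopf circle persists and $s_3$ governs the sign of $\epsilon$. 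Local existence and smooth dependence on these parameters follow from the cohomogeneity one initial value theory of \cite{buzano_initial_2011}.

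The heart of the argument is global existence on $[0,\infty)$ together with the identification of the asymptotics. Passing to normalized variables (e.g.\ $X_i=\dot f_i/f_i$ together with curvature- and potential-normalized ratios) compactifies the system, and each soliton becomes a trajectory emanating from the source critical point attached to $\mathsf{G}/\mathsf{H}$. For the steady case $s_3=0$ (so $\epsilon=0$) I would construct a flow-invariant region trapping the trajectory, together with a monotone function forcing every $f_i$ to stay positive and grow, hence completeness; the trajectory then limits onto the degenerate sink whose center-manifold data yields the sub-linear rate $f_i^2\sim t$. When $s_2=0$ all collapsing directions share this rate, producing the AP limit with Jensen $\mathbb{S}^{4m+3}$ base; when $s_2>0$ the Hopf direction instead stabilizes, $f_{\mathrm{Hopf}}^2\to A$, yielding the ACP limit with non-K\"ahler $\mathbb{CP}^{2m+1}$ base. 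For the expanding case $s_3>0$ (so $\epsilon>0$) the positive constant dominates the balance at infinity, and I would instead show convergence to the hyperbolic conical sink, reading the AC cone metric $dt^2+t^2 g_N$ off its eigendata.

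The hard part will be this global existence and convergence step: I must rule out finite-time collapse $f_i\to 0$ or blow-up of any normalized variable, and then show that every trajectory in the prescribed parameter range actually reaches the intended attractor rather than stalling at an intermediate saddle. This demands precise control of the sign of the flow inside the compactified region---to produce the trapping sets and monotone functions---together with a linearization and center-manifold analysis at the (partly degenerate) sinks to upgrade qualitative convergence into the quantitative AP/ACP/AC rates asserted in the theorem.
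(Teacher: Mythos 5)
Your outline follows the same architecture as the paper (reduce to the cohomogeneity one ODE, use the initial value theory at the singular orbit $\mathbb{HP}^m$ to get the $\mathbb{S}^3$ of parameters, pass to normalized variables to get a polynomial flow, trap trajectories in a compact invariant set, then read off asymptotics), but as written it defers precisely the steps that constitute the proof. The global existence argument is not a routine verification: the paper's trapping region $\mathcal{A}$ in \eqref{eqn_setA} is cut out by the specific inequalities $Y_1\leq 1$, $X_1\leq X_2$, $Y_2\geq Y_3$, $2(Y_2-Y_3)+X_3-X_2\geq 0$, $X_i\geq 0$, and its invariance hinges on the non-obvious estimate of Proposition \ref{prop_technical_1} (which uses $Q\leq 0$ together with $Y_1\leq 1$ and $Y_2\geq Y_3$ to bound $Y_2$). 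Your proposal names no candidate inequalities and no monotone quantities, so the claim that "finite-time collapse or blow-up" can be ruled out is unsubstantiated. Likewise, identifying the parameters ($s_4>0$ is what forces $Q<0$, $H<1$, i.e.\ non-Einstein; $s_3$ is the direction transverse to $\{W=0\}$; $s_1>0$ is needed for $Y_3>0$) requires the explicit eigenvector computation \eqref{eqn_unstable_eigen_zeta}, which you assume rather than produce.

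The second, more structural concern is your plan to extract the AP/ACP rates from "center-manifold data" at the limiting critical point. The trajectories converge to $(0,0,0,\mu,0,0,0)$, which lies on a whole curve of degenerate (non-hyperbolic) critical points parametrized by $\mu$, so there is no isolated sink whose eigendata determines the asymptotics; a center-manifold analysis there would have to contend with this degeneracy and would not by itself decide which value of $\mu$ and which value of $\nu=\lim b/c$ a given trajectory selects. The paper instead proves existence of $\mu$ by monotonicity of $Y_1$, existence of $\nu$ via the B\"ohm functional when $\mu\neq 0$ and via the ratios $X_i/\tilde R_i$ together with a model ODE comparison (Proposition \ref{prop_model_ode}) when $\mu=0$, pins down the admissible pairs $(\mu^2,\nu^2)$ by L'H\^opital-type identities, excludes the unwanted values by Proposition \ref{prop_Y3/Y2<2 means ACP}, and establishes the ACP conclusion by an Appleton-type integral estimate showing $a$ stays bounded (Proposition \ref{prop_mu=0 is cigar paraboloid}). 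None of these mechanisms appears in your proposal, and without them the dichotomy "AP with Jensen base if $s_2=0$, ACP with non-K\"ahler $\mathbb{CP}^{2m+1}$ base if $s_2>0$" is asserted rather than proved.
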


\begin{theorem}
\label{thm_2}
There exists a continuous 3-parameter family of complete $Sp(m+1)U(1)$-invariant Ricci solitons $\{\gamma(s_1,s_2,s_3,s_4)\mid (s_1,s_2,s_3,s_4)\in \mathbb{S}^3, s_1,s_2,s_3\geq 0,s_4> 0\}$ on $\mathbb{H}^{m}$.
\begin{enumerate}
\item
Each $\gamma(s_1,s_2,0,s_4)$ is a non-Einstein steady Ricci soliton. The soliton is AP if $s_1> 0$ and $s_2=0$, with the base of the limit paraboloid being the Jensen $\mathbb{S}^{4m+3}$. The soliton is ACP if $s_1=0$ and $s_2>0$, with the base of the limit paraboloid being the Fubini--Study $\mathbb{CP}^{2m+1}$. The soliton is also ACP if $s_1, s_2>0$, with the base of the limit paraboloid being the non-K\"ahler $\mathbb{CP}^{2m+1}$.
\item
Each $\gamma(s_1,s_2,s_3,s_4)$ with $s_3>0$ is an AC non-Einstein expanding Ricci soliton.
\end{enumerate}
\end{theorem}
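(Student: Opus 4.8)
The overall strategy is to reduce \eqref{eqn_RS} to an autonomous ODE system, realize the parameter set $\mathbb{S}^3$ as the space of smooth initial data at the singular orbit, and then prove global existence together with the claimed asymptotics by a dynamical-systems analysis on a compactified phase space. Concretely, I write the $Sp(m+1)U(1)$-invariant metric as $g=dt^2+f_1^2 g_1+f_2^2 g_2+f_3^2 g_3$, where $g_1,g_2,g_3$ are fixed background metrics on the three irreducible summands of the isotropy representation of $\mathsf{G}/\mathsf{K}$ — the $4m$-dimensional $\mathbb{HP}^m$-direction, the $2$-dimensional $\mathbb{CP}^1$-type summand of the $\mathbb{S}^3$ fiber, and the $1$-dimensional $U(1)$-summand — and take $X=\mathrm{grad}(f)$ with $f=f(t)$. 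For Theorem~\ref{thm_2} the singular orbit degenerates to a single point, so the entire collapsing $\mathbb{S}^{4m+3}$ shrinks and all three functions vanish at $t=0$; the same machinery proves Theorem~\ref{thm_1} with the singular orbit $\mathbb{HP}^m$ instead. Substituting the Ricci components for this fibration, already recorded in \cite{chi2021einstein}, into \eqref{eqn_RS} yields an explicit second-order system for $(f_1,f_2,f_3)$ coupled to the potential $f$, supplemented by the gradient-soliton first integral $R+|\mathrm{grad}(f)|^2+\epsilon f=\mathrm{const}$. This step is mechanical.

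The second step is local existence and the identification of the parameter set. Smoothness of $g$ at the collapsed point forces all $f_i$ to share the common round-sphere leading behaviour $f_i\sim t$, so the geometric freedom sits in the next-order jets of $(f_1,f_2,f_3,f)$ together with the soliton constant $\epsilon$. Applying the initial-value theory of \cite{buzano_initial_2011} (generalizing \cite{eschenburg_initial_2000}) produces a smooth local solution for each admissible choice of this higher-order data; after removing the overall homothety scaling, the admissible data forms a $3$-sphere, and the coordinates $(s_1,s_2,s_3,s_4)$ are chosen so that $s_1,s_2$ control the two independent squashing modes of the collapsing $\mathbb{S}^{4m+3}$, $s_3$ controls the expanding rate (so that $s_3=0$ is exactly the steady locus $\epsilon=0$), and $s_4$ controls the magnitude of the potential (so that $s_4>0$ forces $f$ non-constant, hence the soliton non-Einstein). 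The sign conditions $s_1,s_2,s_3\geq 0,\ s_4>0$ single out the initial jets that integrate to a positive-definite complete metric.

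The third and central step is global existence and completeness. I would pass to dimensionless variables — logarithmic derivatives $X_i=\dot f_i/f_i$ normalized by the mean curvature, the two ratios $f_2/f_1$ and $f_3/f_1$, and a variable measuring $\dot f$ — under which the system becomes a polynomial flow on a compact phase space, with the soliton first integral cutting out an invariant hypersurface foliated by the value of $\epsilon$. The singular-orbit data appears as a source, and the desired solitons are the trajectories leaving it. The hard part is to construct a compact, flow-invariant region containing this source whose only exits are the critical points corresponding to the advertised asymptotics; this prevents the metric functions from collapsing or blowing up in finite time and yields completeness on $[0,\infty)$. Relative to the two-summand Wink solitons of \cite{wink_cohomogeneity_2017,wink_complete_2021}, the genuine difficulty is the coupling of the three summands: the monotone quantities that trap the trajectory must be engineered to control $f_2/f_1$ and $f_3/f_1$ simultaneously, and I expect this estimate — keeping the trajectory in the basin of the correct endpoint for every $(s_1,s_2,s_3,s_4)$ in the octant — to be the main obstacle.

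Finally, I would read off the asymptotics by linearizing the compactified flow at the $\omega$-limit of each trajectory. For $s_3=0$ the first integral forces $\epsilon=0$, and the trajectory limits onto a paraboloidal critical point; whether the collapsing-orbit circle stays bounded (a cigar direction, giving ACP) or grows with the base (giving a genuine AP paraboloid) is dictated by $s_2$, while the limiting ratios among $f_1,f_2,f_3$ identify the base. Matching these ratios against the known homogeneous Einstein metrics identifies the base as the Jensen $\mathbb{S}^{4m+3}$ when $s_1>0,s_2=0$, the Fubini–Study $\mathbb{CP}^{2m+1}$ when $s_1=0,s_2>0$, and the non-K\"ahler $\mathbb{CP}^{2m+1}$ when $s_1,s_2>0$. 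For $s_3>0$ the integral gives $\epsilon>0$ and the trajectory instead converges to the cone critical point, producing an AC expanding soliton, with non-Einsteinness again guaranteed by $s_4>0$.
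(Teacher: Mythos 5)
Your overall architecture matches the paper's: rewrite the invariant metric with three functions, pass to normalized logarithmic-derivative variables so the soliton equation becomes a polynomial flow, realize $(s_1,s_2,s_3,s_4)\in\mathbb{S}^3$ as coordinates on the unstable manifold of the critical point corresponding to the fully collapsed orbit, and trap the trajectories in a compact flow-invariant set to get completeness. Two things, however, are asserted rather than supplied, and the second is a genuine gap rather than an omitted computation. First, the compact invariant set is not just "the main obstacle" to be expected — the whole content of the global existence step is the explicit choice of defining inequalities (in the paper: $Y_1\le 1$, $X_1-X_2\le 0$, $Y_2-Y_3\ge 0$, $2(Y_2-Y_3)+X_3-X_2\ge 0$, $X_i\ge 0$, intersected with $\{Q\le 0\}\cap\{H\le 1\}$) together with the verification that the vector field points inward on each face; the inequality $2(Y_2-Y_3)+X_3-X_2\ge 0$ is the nonobvious one and your proposal gives no candidate for it.

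Second, and more seriously, your plan to "read off the asymptotics by linearizing the compactified flow at the $\omega$-limit" cannot work as stated. All trajectories with $s_4>0$ converge to a point of the form $(0,0,0,\mu,0,0,0)$, and these form a one-parameter continuum of \emph{non-hyperbolic} critical points (the linearization is degenerate in the $Y_1$, $Y_2$, $Y_3$ directions), so linearization determines neither which limit point is reached nor the rates. The paper instead must (i) prove that $\mu=\lim a/b$ and $\nu=\lim b/c$ exist at all — via monotonicity of the B\"ohm functional $v^{2/n}(r_s+\trace(L_0^2))$ when $\mu\neq 0$, and via a separate oscillation/trapping argument for the ratio $Y_3/Y_2$ when $\mu=0$; (ii) pin down the finitely many admissible values of $(\mu^2,\nu^2)$ by an ODE-comparison lemma applied to $X_i/Y_jY_k$ followed by L'H\^opital, and exclude the round values when the initial squashing is nontrivial; and (iii) distinguish AP from ACP by an integrating-factor estimate (\`a la Appleton) showing that $a$ stays bounded precisely when $\mu=0$. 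Similarly, the AC conclusion for $s_3>0$ requires the quantitative statements $X_i/W\to\epsilon/2$ and $W\eta\to 1/\epsilon$, not merely convergence to a critical point. None of these follow from the linearization you invoke, so the asymptotics part of your proof, which is where the theorem's case distinctions actually live, is missing its engine.
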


In the theorems above, scaling the vector $(s_1,s_2,s_3,s_4)$ corresponds to applying a homothetic change to the metrics. Parameters $s_1$ and $s_2$ determine the initial squashing of the principal orbit. These parameters are derived by following the initial value theorem in \cite{eschenburg_initial_2000}. The parameter $s_3$ controls the generalized mean curvature of the principal orbit. If $\epsilon=0$, the generalized mean curvature is freely scaled under homothetic change, making the parameter $s_3$ trivial. We thus set $s_3=0$ for the steady case. From \cite{buzano_initial_2011}, we have the parameter $s_4>0$ for non-Einstein Ricci solitons. The parameter is related to the initial condition $\ddot{f}(0)$. Known examples of cohomogeneity one Ricci solitons recovered by Theorem \ref{thm_1}-\ref{thm_2} are summarized below.

\small
\begin{table}[H]
\centering
  \begin{tabular}{l l l l| l }
 & Metric & Asymptotic & Base & Source \\
\hline
\hline
   $\zeta(1,0,0,0)$, $m=1$ & $\spin(7)$ (Ricci-flat)  & AC & Jensen $\mathbb{S}^7$&\cite{bryant_construction_1989}\\ 
\hline
   $\zeta(s_1,s_2,0,0)$, $m=1$ & $\spin(7)$ (Ricci-flat) & ALC  & nearly-K\"ahler $\mathbb{CP}^3$&\cite{cvetic_cohomogeneity_2002} \\ 
\hline
$\zeta(1,0,0,0)$, $m>1$  &Ricci-flat        & AC   & Jensen $\mathbb{S}^{4m+3} $& \multirow{2}{*}{\cite{bohm_non-compact_1999}}\\
\cline{1-4}
   $\zeta(s_1,0,s_3,0)$     &negative Einstein & AH  &&\\ 
       \hline
   $\zeta(s_1,s_2,0,0)$ ,  $m>1$     & Ricci-flat         & ALC &non-K\"ahler $\mathbb{CP}^{2m+1}$&  \multirow{2}{*}{\cite{chi2021einstein} } \\
\cline{1-4}
   $\zeta(s_1,s_2,s_3,0)$   & negative Einstein  & AH & & \\ 
       \hline
   $\zeta(s_1,0,0,s_4)$     & steady Ricci solitons    & AP & Jensen $\mathbb{S}^{4m+3}$ & \multirow{2}{*}{\cite{wink_cohomogeneity_2017}}\\
\cline{1-4}
   $\zeta(s_1,0,s_3,s_4)$   & expanding Ricci solitons & AC && \\
       \hline
   $\zeta(s_1,s_2,0,s_4)$   & steady Ricci solitons    & ACP & non-K\"ahler $\mathbb{CP}^{2m+1}$&   \multirow{2}{*}{present paper}\\ 
   \cline{1-4}
      $\zeta(s_1,s_2,s_3,s_4)$ & expanding Ricci solitons & AC & &   \\ 
       \hline
  \end{tabular}
  \caption{}
  \label{tab_zeta_curve}
\end{table}

\begin{table}[H]
\centering
  \begin{tabular}{l l l l |l}
 & Metric & Asymptotic & Base & Source \\
\hline
\hline
   $\gamma(0,0,0,0)$ & flat && & \\ 
          \hline
   $\gamma(0,1,0,0)$ & Ricci-flat  & ALC & Fubini--Study $\mathbb{CP}^{2m+1}$ &\multirow{2}{*}{\cite{berard-bergery_sur_1982}} \\ 
   $\gamma(0,s_2,s_3,0)$ & negative Einstein  & AH & \\ 
          \hline
   $\gamma\left(\frac{1}{\sqrt{5}},\frac{2}{\sqrt{5}},0,0\right)$ & \multirow{2}{*}{$\spin(7)$ (Ricci-flat)} & \multirow{2}{*}{ALC} & \multirow{2}{*}{nearly-K\"ahler $\mathbb{CP}^3$} & \multirow{2}{*}{\cite{cvetic_cohomogeneity_2002}}\\
   $m=1$&&&&
   \\
   \hline  
   $\gamma(1,0,0,0)$ & Ricci-flat & AC & Jensen $\mathbb{S}^{4m+3}$ &  \multirow{3}{*}{\cite{chi2021einstein}}\\ 
      \cline{1-4}
   $\gamma(s_1,s_2,0,0)$ & Ricci-flat & ALC & non-K\"ahler $\mathbb{CP}^{2m+1}$ &\\ 
      \cline{1-4}
   $\gamma(s_1,s_2,s_3,0)$ & negative Einstein & AH & \\ 
       \hline
   $\gamma(0,0,0,1)$ & steady Ricci soliton & AP & standard $\mathbb{S}^{4m+3}$ &\multirow{2}{*}{\cite{bryant_ricci_1992}}\\ 
      \cline{1-4}
   $\gamma(0,0,s_3,s_4)$ & expanding Ricci-soliton & AC & &\\ 
       \hline
   $\gamma(0,\sigma,0,(n+3)\sigma)$ & \multirow{2}{*}{steady K\"ahler--Ricci soliton} & \multirow{2}{*}{ACP} & \multirow{2}{*}{Fubini--Study $\mathbb{CP}^{2m+1}$} & \multirow{2}{*}{\cite{Cao_Elliptic_1996}}\\ 
    $\sigma=\frac{1}{\sqrt{n^2+6n+10}}$&&&&\\  
   \hline
   $\gamma(0,s_2,s_3,s_4)$ & \multirow{2}{*}{expanding K\"ahler--Ricci soliton} & \multirow{2}{*}{AC} &  &\multirow{2}{*}{\cite{cao1997limits}}\\
   $s_4=(n+3)s_2+\epsilon s_3$&&&&  \\ 
         \hline
   $\gamma(s_1,0,0,s_4)$ & steady Ricci soliton & AP & Jensen $\mathbb{S}^{4m+3}$& \multirow{3}{*}{present paper}\\ 
      \cline{1-4}
   $\gamma(s_1,s_2,0,s_4)$ & steady Ricci soliton  & ACP& non-K\"ahler $\mathbb{CP}^{2m+1}$&\\ 
      \cline{1-4}
   $\gamma(s_1,s_2,s_3,s_4)$ & expanding Ricci soliton  & AC & \\ 
          \hline
  \end{tabular}
    \label{tab_gamma_curve}
  \caption{}
\end{table}

The analytic techniques in this paper are generalized to the case where the principal orbit is the octonionic Hopf fibration. The Wink solitons on $\mathbb{OP}^2\backslash\{*\}$ are thus recovered. In addition, new non-shrinking Ricci solitons are found on $\mathbb{O}^2$. Since $\mathbb{S}^7$ and $\mathbb{OP}^1$ are irreducible, there is only one squashing parameter $s_1$.
\begin{theorem}
\label{thm_3}
There exists a continuous 2-parameter family of complete $\spin(9)$-invariant Ricci solitons $\{\tilde{\gamma}(s_1,s_3,s_4)\mid (s_1,s_3,s_4)\in \mathbb{S}^2, s_1,s_3\geq 0,s_4> 0\}$ of cohomogeneity one on $\mathbb{O}^2$. Specifically,
\begin{enumerate}
\item
Each $\tilde{\gamma}(s_1,0,s_4)$ is a steady Ricci solitons. If $s_4=0$, the soliton is Ricci-flat with an AC asymptotic, with the base of the limit cone being the Bourguignon--Karcher $\mathbb{S}^{15}$. If $s_4> 0$, the soliton is non-Einstein with an AP asymptotic, with the base of the limit paraboloid being the Bourguignon--Karcher $\mathbb{S}^{15}$.
\item
Each $\tilde{\gamma}(s_1,s_3,s_4)$ is an expanding Ricci soliton. If $s_4=0$, the soliton is Einstein with an AH asymptotic. If $s_4>0$. the soliton is an AC non-Einstein Ricci soliton.
\end{enumerate}
\end{theorem}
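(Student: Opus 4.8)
The plan is to mirror the proof of Theorems \ref{thm_1}--\ref{thm_2}, exploiting the fact that the octonionic Hopf fibration $\mathbb{S}^7\hookrightarrow\mathbb{S}^{15}\to\mathbb{S}^8$ gives a principal orbit with only \emph{two} irreducible summands, so the phase space is one dimension smaller than in the quaternionic case. First I would write the $\spin(9)$-invariant metric on $\mathbb{O}^2\cong\mathbb{R}^{16}$ as $g=dt^2+f_1(t)^2 g_{\mathbb{S}^7}+f_2(t)^2 g_{\mathbb{S}^8}$, with the fibre of dimension $d_1=7$ and the base $\mathbb{OP}^1=\mathbb{S}^8$ of dimension $d_2=8$. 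Substituting into the gradient form $\ricci+\nabla^2 u+\tfrac{\epsilon}{2}g=0$ of \eqref{eqn_RS} reduces the problem to an autonomous ODE system in $(f_1,f_2,\dot f_1,\dot f_2,\dot u)$ together with the first-order constraint coming from the contracted second Bianchi identity; completeness of the soliton is then equivalent to producing a solution of this system on $[0,\infty)$ with $f_1,f_2>0$ for $t>0$.

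Next I would set up the initial value problem at the singular orbit. Since the singular orbit here is a point, the smooth-cap condition forces $f_1,f_2\to 0$ with round leading order, and the Eschenburg--Wang/Buzano initial value theorem (\cite{eschenburg_initial_2000}, \cite{buzano_initial_2011}) guarantees a one-sided smooth solution. The free data in the Taylor expansion are exactly the squashing rate of the Berger-type $\mathbb{S}^{15}$ (giving $s_1$), the generalized mean curvature of the principal orbit (giving $s_3$), and the second derivative $\ddot u(0)$ of the potential (giving $s_4$); homothety lets me normalize $(s_1,s_3,s_4)$ to the sphere $\mathbb{S}^2$, and in the steady case $s_3$ is scaled away, so I set $s_3=0$. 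To make the global analysis tractable I would then compactify, passing to normalized variables (the logarithmic derivatives $\dot f_i/f_i$, the mean curvature, and a rescaled potential gradient) and a new independent variable, turning the system into a polynomial flow on a compact region whose critical points encode the singular orbit and the possible asymptotic geometries.

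The hard part will be the global existence and the identification of the correct $\omega$-limit. Here I would construct an invariant trapping region together with a monotone quantity (an energy or a sign-definite combination of the shape-operator variables) that prevents the warping functions from vanishing or blowing up in finite time, thereby extending the trajectory emanating from the singular-orbit critical point to all of $[0,\infty)$. I expect the delicate points to be controlling the signs of $\dot f_1,\dot f_2$ so that the squashing stays bounded, and showing that the trajectory actually limits onto the intended critical point rather than escaping; this is where the two-summand structure helps, since the reduced dimension of the phase space makes the relevant stable and unstable manifolds easier to separate than in Theorems \ref{thm_1}--\ref{thm_2}.

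Finally I would read off the asymptotics from the limiting critical point and settle the Einstein/non-Einstein dichotomy. The value $s_4$ governs the potential: when $s_4=0$ the reduced equations force the potential to be constant, so the metric is Einstein---Ricci-flat and AC, with the Bourguignon--Karcher $\mathbb{S}^{15}$ as the base of the tangent cone when $s_3=0$, and negative Einstein and AH when $s_3>0$. When $s_4>0$ the soliton is genuinely non-Einstein: for $s_3=0$ the limiting critical point corresponds to a metric paraboloid, giving the AP asymptotic with the Bourguignon--Karcher $\mathbb{S}^{15}$ as base, while for $s_3>0$ one obtains an AC expanding soliton. Matching the limit paraboloid and cone data to the claimed bases completes the proof; throughout, the argument runs parallel to that of Theorems \ref{thm_1}--\ref{thm_2}, with the single squashing parameter $s_1$ replacing the pair $(s_1,s_2)$ there.
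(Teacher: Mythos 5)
Your proposal follows essentially the same route as the paper: reduce to the two-summand cohomogeneity one system for the octonionic Hopf fibration, pass to the normalized polynomial flow, trap the trajectory emanating from the full-collapse critical point $\left(\frac{1}{15},\frac{1}{15},\frac{1}{15},\frac{1}{15},0\right)$ in a compact invariant set (the paper's $\tilde{\mathcal{A}}$), and read off the AP/AC/AH asymptotics from the limiting behavior, with $s_4=0$ confining the flow to the Einstein locus. The one detail you gloss over---why the squashed trajectories select the Bourguignon--Karcher value $\nu=\tfrac{3}{11}$ rather than the round value $\nu=1$---is handled in the paper by carrying over Propositions \ref{prop_b/c limit exists} and \ref{prop_Y3/Y2<2 means ACP} to this setting, exactly in the spirit of your ``matching the limit data'' step.
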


\begin{remark}
Let $n=\dim{\mathsf{G}/\mathsf{K}}$. The hypersurfaces of each AP steady Ricci soliton in Theorem \ref{thm_1}-\ref{thm_3} has volume growth rate of $t^{\frac{n}{2}}$. We refrain from referring to this as Euclidean volume growth since the principal orbits are non-standard homogeneous spheres at infinity. The hypersurface of each ACP steady Ricci soliton in Theorem \ref{thm_1}-\ref{thm_2} has volume growth rate of $t^{\frac{n-1}{2}}$. For the same reason that the base of the asymptotic cone is not a standard sphere, we do not classify the AC Ricci-flat metric in Theorem \ref{thm_3} as asymptotically Euclidean or asymptotically locally Euclidean.
\end{remark}

\begin{remark}
Replace the intermediate group $\mathsf{H}$ by $\mathsf{L}=Sp(m)U(1)U(1)$. The group triple $(\mathsf{K},\mathsf{L},\mathsf{G})$ gives rise to the complex Hopf fibration. If $\mathsf{G}/\mathsf{K}$ collapses to $\mathsf{G}/\mathsf{L}=\mathbb{CP}^{2m+1}$, the total space is a complex line bundle over $\mathbb{CP}^{2m+1}$. We refer to \cite{chi2024infinitely} for a study of this case, as the analytic techniques involved are different.
\end{remark}

By \cite{bryant_ricci_1992} and \cite{cao1997limits}, Ricci solitons represented by $\gamma(0,0,s_3,s_4)$ and $\gamma(0,s_2,s_3,(n+3)s_2+\epsilon s_3)$ have positive sectional curvature. We show that $\gamma(s_1,0,0,s_4)$ contains new non-collapsed steady Ricci soliton with positive sectional curvatures, arising as small perturbations of the Bryant soliton.

\begin{theorem}
\label{thm_4}
There exists a sufficiently small $\delta>0$ such that each $\gamma(s_1,0,0,\sqrt{1-s_1^2})$ (resp. $\tilde{\gamma}(s_1,0,\sqrt{1-s_1^2})$) with $s_1\in [0,\delta)$ is a positively curved Ricci soliton on $\mathbb{H}^{4m+4}$ (resp. $\mathbb{O}^2$), whose asymptotic paraboloid has a base as the Jensen sphere (resp. the Bourguignon--Karcher sphere).
\end{theorem}

Since the base of the  asymptotic paraboloid is not the standard sphere, Ricci solitons in the conjecture above are not asymptotically cylindrical in the Brendle sense. Any positively curved asymptotically cylindrical steady Ricci soliton is isometric to the Bryant soliton up to scaling; see \cite{brendle2014rotational}.

The paper is organized as follows: First, we describe the cohomogeneity one Ricci soliton equation and apply the coordinate change from \cite{dancer_non-kahler_2009}. Next, we prove the existence of complete Ricci solitons by extending the compact invariant set in \cite{chi2021einstein}. We then investigate the asymptotic behavior of these Ricci solitons, proving the existence of a 1-parameter family of AP steady Ricci solitons on $\mathbb{H}^{m+1}$ and extend our study to the case of $\mathbb{O}^2$. Finally, we prove Theorem \ref{thm_4}. To provide a clearer illustration of our results, we present Figure \ref{fig_1} and Figure \ref{fig_2} at the end.

\section{Cohomogeneity one Ricci soliton equation}
In this section, we restrict to the case $\epsilon\geq 0$ and write down the corresponding cohomogeneity one system. Let $Q$ be the inner product on $\mathfrak{g}$ that generates the standard metric on $\mathbb{S}^{4m+3}$. As a $\mathsf{K}$-module, the isotropy representation $\mathfrak{g}/\mathfrak{k}$ splits into three $Q$-orthogonal irreducible summands $\mathbf{1}\oplus \mathbf{2}\oplus \mathbf{4m}$, where $\mathfrak{h}/\mathfrak{k}=\mathbf{1}\oplus \mathbf{2}$ and $\mathfrak{g}/\mathfrak{h}=\mathbf{4m}$. 
By \cite{ziller_homogeneous_1982}, the scalar curvature of $\mathsf{G}/\mathsf{K}$ is 
\begin{equation}
\label{eqn_scalar curvature for G/K}
r_s=\frac{8}{b^2}+\frac{4m(4m+8)}{c^2}-2\frac{a^2}{b^4}-4m\frac{a^2}{c^4}-8m\frac{b^2}{c^4}.
\end{equation}
Consider the cohomogeneity one metric
\begin{equation}
\label{eqn_coho1metric}
g=dt^2+a^2(t)\left.Q\right|_{\mathbf{1}}+b^2(t)\left.Q\right|_{\mathbf{2}}+c^2(t)\left.Q\right|_{\mathbf{4m}}.
\end{equation}
By \cite{eschenburg_initial_2000} and \cite{buzano_initial_2011}, the cohomogeneity one non-shrinking Ricci soliton equation is
\begin{equation}
\label{eqn_cohomogeneity_one_soliton_equation}
\begin{split}
\frac{\ddot{a}}{a}-\left(\frac{\dot{a}}{a}\right)^2&=-\left(\frac{\dot{a}}{a}+2\frac{\dot{b}}{b}+4m\frac{\dot{c}}{c}\right)\frac{\dot{a}}{a}+2\frac{a^2}{b^4}+4m\frac{a^2}{c^4}+\frac{\dot{a}}{a}\dot{f}+\frac{\epsilon}{2},\\
\frac{\ddot{b}}{b}-\left(\frac{\dot{b}}{b}\right)^2&=-\left(\frac{\dot{a}}{a}+2\frac{\dot{b}}{b}+4m\frac{\dot{c}}{c}\right)\frac{\dot{b}}{b}+\frac{4}{b^2}-2\frac{a^2}{b^4}+4m\frac{b^2}{c^4}+\frac{\dot{b}}{b}\dot{f}+\frac{\epsilon}{2},\\
\frac{\ddot{c}}{c}-\left(\frac{\dot{c}}{c}\right)^2&=-\left(\frac{\dot{a}}{a}+2\frac{\dot{b}}{b}+4m\frac{\dot{c}}{c}\right)\frac{\dot{c}}{c}+\frac{4m+8}{c^2}-2\frac{a^2}{c^4}-4\frac{b^2}{c^4}+\frac{\dot{c}}{c}\dot{f}+\frac{\epsilon}{2},\\
\ddot{f}+\frac{\epsilon}{2}&=\frac{\ddot{a}}{a}+2\frac{\ddot{b}}{b}+4m\frac{\ddot{c}}{c}.
\end{split}
\end{equation}

If we redefine $a\to a$, $b\to \sqrt{2}b$, $c\to 2c$, and set $f\equiv 0$, we obtain the cohomogeneity one Einstein equation in \cite{chi2021einstein}. For the steady case, the system \eqref{eqn_cohomogeneity_one_soliton_equation} is invariant under homothetic change. For the expanding case, we fix the homothety by setting $\epsilon=1$.

Let $\bar{r}_s$ be the scalar curvature of the ambient space. Using the general identity 
\begin{equation}
\label{eqn_original_conservation_law}
\bar{r}_s+|\nabla f|^2+\epsilon f=\tilde{C}
\end{equation}
in \cite{hamilton_formations_1993}, we obtain the conservation law for the potential function
\begin{equation}
\label{eqn_potential_equation}
\ddot{f}+(\trace(L)-\dot{f})\dot{f}-\epsilon f=C,
\end{equation}
where $C = -\frac{n+1}{2}\epsilon - \tilde{C}$ is a constant, and $L$ is the shape operator of $\mathsf{G}/\mathsf{K}$.
Canceling all second derivative terms using \eqref{eqn_cohomogeneity_one_soliton_equation} and \eqref{eqn_potential_equation}, we have 
\begin{equation}
\label{eqn_cohomogneity_one_conservation_law}
\begin{split}
&\trace(L^2)+r_s+\frac{n-1}{2}\epsilon-(\dot{f}-\trace(L))^2=C+\epsilon f.
\end{split}
\end{equation}
Using the Gauss--Codazzi equation, we write $\bar{r}_s$ in terms of the potential function:
\begin{equation}
\label{eqn_scalar curvature in terms of f}
\bar{r}_s=-\frac{n+1}{2}\epsilon-(C+\epsilon f)-(\dot{f})^2.
\end{equation}
It is known that $\bar{r}_s> -\frac{n+1}{2}\epsilon$ for a non-Einstein Ricci soliton; see \cite[Corollary 2.3]{chen_strong_2009}. Thus, equation \eqref{eqn_scalar curvature in terms of f} implies $C + \epsilon f < 0$. In particular, the constant $C$ must be negative if the non-Einstein Ricci soliton is steady.

Adding a constant to the potential function $f$ does not change the Ricci soliton equation. Hence, we set $f(0)=0$ without loss of generality. By \cite{buzano_initial_2011}, the potential function is an even function of $t$ near $0$. Thus, we set $\dot{f}(0)=0$ so that the function extended smoothly to the singular orbit. Let $d_S$ denote the dimension of the collapsing sphere, where either $d_S = 3$ or $d_S = 4m + 3$. We have $\trace(L) = \frac{d_S}{t} + O(t)$ as $t \to 0$. It follows from \eqref{eqn_potential_equation} that $\ddot{f}(0) = \frac{C}{d_S+1}$. The initial condition for the potential function is established.

We consider the following two initial conditions for metric components. If $\mathsf{G}/\mathsf{K}$ collapses to $\mathsf{G}/\mathsf{H}=\mathbb{HP}^{m}$ at $t=0$, we have
\begin{equation}
\label{eqn_initial_condition_G/H}
\lim_{t\to 0} (a,b,c,\dot{a},\dot{b},\dot{c})=(0,0,c_0,1,1,0),\quad c_0> 0.
\end{equation}
There exists another parameter from $(\dddot{a}-\dddot{b})(0)$; see \cite[Proposition 2.4]{chi2021einstein}. The $\mathbb{S}^3$-fiber is squashed if such a parameter is nonzero. If the principal orbit fully collapses to a point, the initial condition is
\begin{equation}
\label{eqn_initial_condition_Taub-NUT}
\lim_{t\to 0} (a,b,c,\dot{a},\dot{b},\dot{c})=(0,0,0,1,1,1).
\end{equation}
For this case, there are two parameters from $(\dddot{a}-\dddot{b})(0)$ and $(\dddot{b}-\dddot{c})(0)$; see \cite[Remark 2.5]{chi2021einstein}. If these parameters are nonzero, the principal orbit collapses as the standard $\mathbb{S}^{4m+3}$ only up to the second order.


If $\epsilon=1$, both initial conditions have one free parameter from the potential function and two others from the homogeneous metric components. If $\epsilon = 0$, the cohomogeneity one equation becomes invariant under homothetic change. Thus, there is one less free parameter in the steady case. 

Apply coordinate change $d\eta=(\trace(L)-\dot{f})dt$. Define
\begin{equation}
\label{eqn_new_variable_for_new_coordinate}
\begin{split}
&X_1=\frac{\frac{\dot{a}}{a}}{\trace(L)-\dot{f}},\quad X_2=\frac{\frac{\dot{b}}{b}}{\trace(L)-\dot{f}},\quad X_3=\frac{\frac{\dot{c}}{c}}{\trace(L)-\dot{f}},\\
&Y_1=\frac{a}{b},\quad Y_2=\frac{\frac{1}{b}}{\trace(L)-\dot{f}},\quad Y_3=\frac{\frac{b}{c^2}}{\trace(L)-\dot{f}},\quad W=\frac{1}{(\trace(L)-\dot{f})^2}.
\end{split}
\end{equation}


Define the following functions on $\eta$.
\begin{equation}
\label{eqn_curvature function}
\begin{split}
&R_1=2 Y_1^2Y_2^2+4m Y_1^2Y_3^2,\quad R_2=4Y_2^2-2 Y_1^2Y_2^2+4m Y_3^2,\quad R_3=(4m+8)Y_2Y_3-2Y_1^2Y_3^2-4Y_3^2,\\
&R_s=R_1+2R_2+4mR_3,\quad H=X_1+2X_2+4mX_3,\quad G=X_1^2+2X_2^2+4mX_3^2.
\end{split}
\end{equation}
Let $'$ denote the derivative with respect to $\eta$. The soliton equation \eqref{eqn_cohomogeneity_one_soliton_equation} is transformed to
\begin{equation}
\label{eqn_Polynomial_soliton_equation}
\begin{bmatrix}
X_1\\
X_2\\
X_3\\
Y_1\\
Y_2\\
Y_3\\
W
\end{bmatrix}'=
V(X_1,X_2,X_3,Y_1,Y_2,Y_3,W):=\begin{bmatrix}
X_1(G-\frac{\epsilon}{2}W-1)+R_1+\frac{\epsilon}{2}W\\
X_2(G-\frac{\epsilon}{2}W-1)+R_2+\frac{\epsilon}{2}W\\
X_3(G-\frac{\epsilon}{2}W-1)+R_3+\frac{\epsilon}{2}W\\
Y_1(X_1-X_2)\\
Y_2(G-\frac{\epsilon}{2}W-X_2)\\
Y_3(G-\frac{\epsilon}{2}W+X_2-2X_3)\\
2W(G-\frac{\epsilon}{2}W)
\end{bmatrix}.
\end{equation}
Define $Q:= G+R_s+(n-1)\frac{\epsilon}{2}W-1$.
The conservation law \eqref{eqn_cohomogneity_one_conservation_law} becomes
\begin{equation}
\label{eqn_new_cohomogneity_one_conservation_law}
\begin{split}
Q=\frac{C+\epsilon f}{(\trace(L)-\dot{f})^2}.
\end{split}
\end{equation}
As discussed above, the quantity $C+\epsilon f$ remains negative for a non-Einstein Ricci soliton. Therefore, trajectories representing non-Einstein Ricci solitons are characterized by the inequalities $Q<0$ and $H<1$. In particular, Proposition 3.1 in \cite{dancer_non-kahler_2009} can be adapted to our context, as demonstrated below. 
\begin{proposition}
\label{prop_phase_space_for_Ricci_solitons}
The set 
$$\mathcal{RS}:=\{Q\leq 0\}\cap\{H\leq 1\}\cap\{W\geq 0\}\cap \{Y_1,Y_2,Y_2\geq 0\}$$ is flow-invariant.
\end{proposition}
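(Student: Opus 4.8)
The plan is to verify flow-invariance by the standard subtangentiality (Nagumo-type) criterion: the intersection of sublevel and superlevel sets is preserved provided that, at every boundary point, each defining function that is \emph{active} has the correct sign of its $\eta$-derivative along \eqref{eqn_Polynomial_soliton_equation} — nonincreasing for the constraints written as $\{g\le 0\}$ and nondecreasing for those written as $\{g\ge 0\}$ — while the remaining inequalities hold. Since the constraint functions and the vector field $V$ are polynomial, it suffices to compute $Q'$, $H'$, $W'$ and $Y_i'$ and check their signs on the respective faces $\{Q=0\}$, $\{H=1\}$, $\{W=0\}$, $\{Y_i=0\}$.

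The faces coming from $W$ and the $Y_i$ are immediate. In \eqref{eqn_Polynomial_soliton_equation} each of $W'$, $Y_1'$, $Y_2'$, $Y_3'$ carries the corresponding coordinate as an explicit factor, so $W'=0$ on $\{W=0\}$ and $Y_i'=0$ on $\{Y_i=0\}$. Hence each hyperplane $\{W=0\}$, $\{Y_i=0\}$ is itself flow-invariant, and the half-spaces $\{W\ge 0\}$, $\{Y_i\ge 0\}$ are preserved. For the face $\{H=1\}$ I compute $H'=X_1'+2X_2'+4mX_3'$ directly; collecting the shared factor $(G-\frac{\epsilon}{2}W-1)$ and using $1+2+4m=n$ together with the definition of $Q$ yields the clean identity
\begin{equation*}
H'=(H-1)\left(G-\tfrac{\epsilon}{2}W-1\right)+Q.
\end{equation*}
On $\{H=1\}$ the first term drops out and $H'=Q\le 0$ precisely because we are inside $\{Q\le 0\}$; thus $\{H\le 1\}$ is preserved.

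The decisive step is the face $\{Q=0\}$, and this is where I expect the main computational obstacle. The brute-force route is to differentiate $Q=G+R_s+(n-1)\frac{\epsilon}{2}W-1$, which forces one through $R_s'$ and hence the $\eta$-derivatives of the quadratic curvature monomials $Y_1^2Y_2^2$, $Y_1^2Y_3^2$, $Y_2^2$, $Y_3^2$, $Y_2Y_3$; each has a logarithmic derivative assembled from the bracket factors in \eqref{eqn_Polynomial_soliton_equation}, and the bookkeeping is heavy. A cleaner path exploits the conservation law \eqref{eqn_new_cohomogneity_one_conservation_law}: writing $Q=(C+\epsilon f)W$, using $(\log W)'=2(G-\frac{\epsilon}{2}W)$ from \eqref{eqn_Polynomial_soliton_equation}, and noting $H=\trace(L)/(\trace(L)-\dot f)$ so that $\frac{d}{d\eta}(C+\epsilon f)=\epsilon(H-1)$, I obtain
\begin{equation*}
Q'=2Q\left(G-\tfrac{\epsilon}{2}W\right)+\epsilon(H-1)W.
\end{equation*}
On $\{Q=0\}$ this reduces to $Q'=\epsilon(H-1)W$, which is $\le 0$ since $\epsilon\ge 0$, $W\ge 0$ and $H\le 1$ hold on the relevant part of $\partial\mathcal{RS}$; in the steady case $\epsilon=0$ the level set $\{Q=0\}$ is in fact invariant. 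Either way $\{Q\le 0\}$ is preserved.

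I would present the conservation-law derivation as the main argument and, if a referee prefers a self-contained phase-space computation, confirm the displayed identity for $Q'$ by the direct polynomial differentiation as a check. Having signed all four families of faces, the subtangentiality criterion yields invariance of $\mathcal{RS}$. The only genuine difficulty is organizing the $Q'$ calculation; the conservation law converts it from a lengthy monomial-by-monomial expansion into a two-line identity, which is why I would route the proof through \eqref{eqn_new_cohomogneity_one_conservation_law}.
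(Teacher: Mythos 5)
Your proposal is correct and matches the paper's proof: Proposition \ref{prop_phase_space_for_Ricci_solitons} is proved there by exhibiting exactly the system $Q'=2Q\left(G-\frac{\epsilon}{2}W\right)+\epsilon(H-1)W$ and $H'=(H-1)\left(G-\frac{\epsilon}{2}W-1\right)+Q$ of \eqref{eqn_characteriaztion_of_non_trivial_soliton}, and then noting that $W'$ and each $Y_i'$ carry the corresponding coordinate as an explicit factor, which is precisely your face-by-face argument. The one caveat is that your conservation-law derivation of the $Q'$ identity presupposes a trajectory arising from an actual soliton with potential $f$, whereas the proposition concerns arbitrary integral curves of the polynomial vector field $V$ on the phase space (including points such as $\{W=0\}$ with $\epsilon=1$ that correspond to no geometric solution); since the identity is polynomial, the direct differentiation you already flag as a check should be the official verification, with the conservation law serving as motivation.
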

\begin{proof}
Straightforward computations show that
\begin{equation}
\label{eqn_characteriaztion_of_non_trivial_soliton}
\begin{split}
\begin{bmatrix}
Q\\
H
\end{bmatrix}'=\begin{bmatrix}
2Q(G-\frac{\epsilon}{2}W)+\epsilon(H-1)W\\
(H-1)\left(G-\frac{\epsilon}{2}W-1\right)+Q
\end{bmatrix}.
\end{split}
\end{equation}
The derivatives of $W$ and $Y_i$ show that $\mathcal{RS}$ is flow-invariant.
\end{proof}

Based on the above proposition, trajectories representing Einstein metrics are characterized by the equalities $Q=0$ and $H=1$. As $\trace(L)-\dot{f}$ can be scaled freely under homothetic change for the steady case, it is unsurprising that $W$ is decoupled if $\epsilon=0$. Trajectories representing steady Ricci solitons can be characterized by the equality $W=0$ even though the function $\frac{1}{(\trace(L)-\dot{f})^2}$ does not vanish. We summarize the above discussion with the following table.
\begin{center}
\begin{table}[H]
\begin{tabular}{l l l}
\text{Metric}&\text{Dim. of phase space}& \text{Equations and inequalites}\\
\hline
\hline
\text{Ricci-flat} &4&  $Q=0$, $H=1$, $W=0$\\ 
\hline
\text{Negative Einstein}&5&  $Q=0$, $H=1$, $W>0$ \\ 
\hline
\text{Non-Einstein steady Ricci soliton} &6&  $Q<0$, $H<1$, $W=0$\\ 
\hline
\text{Non-Einstein expanding Ricci soliton} &7&  $Q<0$, $H<1$, $W>0$\\ 
\hline
\end{tabular}
\caption{}
\end{table}
\label{tab_dimension of phase space}
\end{center}
Besides $\mathcal{RS}_{\text{Einstein}}:=\mathcal{RS}\cap\{Q=0\}\cap\{H=1\}$ and $\mathcal{RS}_{\text{steady}}:=\mathcal{RS}\cap\{W=0\}$, there are two other invariant subsets with geometric importance.
\begin{enumerate}
\item
$\mathcal{RS}_{\text{FS}}:=\mathcal{RS}\cap\{Y_2-Y_3=0\}\cap\{X_2-X_3=0\}$.

The principal orbit is a circle bundle over the Fubini--Study $\mathbb{CP}^{2m+1}$.
\item
$\mathcal{RS}_{\text{KE}}:=\mathcal{RS}_{\text{FS}}\cap \left\{(4m+4)Y_2Y_3+\frac{\epsilon}{2}W-X_2(1+X_1)=0\right\}\cap \{X_2-Y_1Y_2=0\}$

Integral curves on this invariant set represent K\"ahler--Ricci solitons; see \cite{dancer2011ricci}.
\item
$\mathcal{RS}_{\text{round}}:=\mathcal{RS}\cap\{Y_1=1\}\cap\{X_1-X_2=0\}$.

The principal orbit is a round $\mathbb{S}^3$-bundle over $\mathbb{HP}^{m}$.
\end{enumerate}

For $\epsilon=1$, the variable $t$ and functions $a$, $b$, $c$ are recovered by
\begin{equation}
\label{eqn_recovery_expanding}
t=\int_{\eta_0}^\eta \sqrt{W} d\tilde{\eta}+t(\eta_0), \quad a=\frac{Y_1 \sqrt{W}}{Y_2},\quad b=\frac{\sqrt{W}}{Y_2},\quad c=\sqrt{\frac{W}{Y_2Y_3}}.
\end{equation}
For $\epsilon=0$, we recover variable $t$ and functions $a$, $b$, $c$ by \eqref{eqn_recovery_expanding} with $\sqrt{W}$ replaced by 
\begin{equation}
\label{eqn_recovery_steady}
\tilde{W}:= \tilde{W}(\eta_0)\exp\left(\int_{\eta_0}^\eta G d\tilde{\eta}\right).
\end{equation}

We end this section with the following proposition, which transforms the existence problem of a complete Ricci soliton to finding an integral curve to \eqref{eqn_Polynomial_soliton_equation} that is defined on $\mathbb{R}$.
\begin{proposition}
\label{prop_etainfty is tinfty}
For $\epsilon=0$, the solution to the original system \eqref{eqn_cohomogeneity_one_soliton_equation} is defined on $(0,\infty)$ if the corresponding integral curve to \eqref{eqn_Polynomial_soliton_equation} is defined on $\mathbb{R}$. The statement holds for $\epsilon=1$ if $\lim\limits_{\eta\to\infty} W=0$.
\end{proposition}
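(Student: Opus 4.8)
The plan is to view $t$ as an explicit function of $\eta$ through the recovery formulas \eqref{eqn_recovery_expanding} and \eqref{eqn_recovery_steady}, and to show that this function is a strictly increasing diffeomorphism of $\mathbb{R}$ onto $(0,\infty)$; since the integral curve is defined on all of $\mathbb{R}$ by hypothesis, the conclusion then follows. First I would record that along the curve $\tfrac{dt}{d\eta}$ equals the integrand in the recovery formula, namely $\tilde W$ when $\epsilon=0$ and $\sqrt W$ when $\epsilon=1$, and that this quantity is strictly positive at every finite $\eta$. For $\epsilon=1$ this uses that $\{W=0\}$ is flow-invariant (immediate from the $W$-row of \eqref{eqn_Polynomial_soliton_equation}), so $W>0$ persists once it holds. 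Strict positivity makes $\eta\mapsto t$ strictly increasing, and the lower endpoint is governed by the singular-orbit asymptotics $\trace(L)-\dot f\sim d_S/t$, which force the integrand to decay exponentially as $\eta\to-\infty$ and hence $t\to 0^+$. The whole matter thus reduces to proving $t(\eta)\to\infty$ as $\eta\to\infty$, which I would handle case by case.

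For $\epsilon=0$ I expect this to be routine. Since $G=X_1^2+2X_2^2+4mX_3^2\ge 0$ is a nonnegative sum of squares, the exponent $\int_{\eta_0}^\eta G\,d\tilde\eta$ in $\tilde W=\tilde W(\eta_0)\exp\!\big(\int_{\eta_0}^\eta G\,d\tilde\eta\big)$ is nonnegative for $\eta\ge\eta_0$, so $\tilde W\ge\tilde W(\eta_0)>0$ and $t$ grows at least linearly in $\eta$, giving divergence immediately.

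The main obstacle is the case $\epsilon=1$: here the extra hypothesis $\lim_{\eta\to\infty}W=0$ makes the integrand $\sqrt W$ tend to $0$, so one cannot bound it below by a positive constant and must instead control its rate of decay. My plan is to set $v:=1/\sqrt W=\trace(L)-\dot f>0$ and to convert the $W$-equation $W'=2W(G-\tfrac12 W)$ into a scalar equation for $v$, obtaining
\begin{equation*}
v'=-Gv+\frac{1}{2v},\qquad (v^2)'=1-2Gv^2\le 1,
\end{equation*}
where the inequality again uses only $G\ge 0$. Integrating yields the at-most-$\sqrt\eta$ growth $v(\eta)^2\le v(\eta_0)^2+(\eta-\eta_0)$, so that $\tfrac{dt}{d\eta}=1/v\ge\big(v(\eta_0)^2+\eta-\eta_0\big)^{-1/2}$ and hence $t(\eta)\ge t(\eta_0)+2\sqrt{v(\eta_0)^2+\eta-\eta_0}-2v(\eta_0)\to\infty$. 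The delicate point I would want to verify carefully is precisely this rate estimate: that the nonnegativity of $G$ alone prevents $\trace(L)-\dot f$ from growing faster than $\sqrt\eta$, so the improper integral $\int d\eta/v$ diverges despite $\sqrt W\to 0$. The hypothesis $\lim W=0$ itself serves only to place us in the expanding asymptotically conical regime where $v\to\infty$; the divergence of $t$ ultimately rests on the differential inequality $(v^2)'\le 1$.
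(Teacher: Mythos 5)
Your argument is correct, and it splits exactly where the paper does: the $\epsilon=0$ half is essentially identical to the paper's (the paper also observes that $\tilde W$ in \eqref{eqn_recovery_steady} is eventually bounded below by a positive constant because $G\ge 0$, so $t$ grows at least linearly in $\eta$), but for $\epsilon=1$ the paper simply cites Lemma 3.14 of Dancer--Wang, whereas you supply a self-contained proof. Your substitute argument is sound: with $v=1/\sqrt{W}=\trace(L)-\dot f>0$, the $W$-row of \eqref{eqn_Polynomial_soliton_equation} does give $(v^2)'=1-2Gv^2\le 1$, hence $v(\eta)^2\le v(\eta_0)^2+(\eta-\eta_0)$ and $t(\eta)-t(\eta_0)=\int_{\eta_0}^{\eta}v^{-1}\,d\tilde\eta\ge 2\sqrt{v(\eta_0)^2+\eta-\eta_0}-2v(\eta_0)\to\infty$. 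What this buys is twofold: the proof becomes independent of the external reference, and it shows the hypothesis $\lim_{\eta\to\infty}W=0$ is not actually needed for the conclusion --- only $G\ge 0$ and $v>0$ are used, so you prove a slightly stronger statement (the hypothesis merely records the regime in which the proposition is later applied). The one point worth stating explicitly if you write this up is the positivity $v>0$ for all $\eta$: it holds near the singular orbit since $\trace(L)\sim d_S/t\to+\infty$ while $\dot f\to 0$, and it cannot fail later because $v=0$ would mean $W=\infty$, contradicting that the integral curve is defined (and finite) on all of $\mathbb{R}$.
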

\begin{proof}
For $\epsilon = 0$, since eventually $\tilde{W}>\delta$ for some $\delta>0$ from \eqref{eqn_recovery_steady}, we have $\lim\limits_{\eta\to\infty}t=\infty$ from \eqref{eqn_recovery_expanding}. The proof is complete. For $\epsilon=1$, the statement follows by Lemma 3.14 in \cite{dancer_non-kahler_2009}. 
\end{proof}

\section{Existence of complete Ricci solitons}
\label{sec_Global_existence}
In the new coordinates \eqref{eqn_new_variable_for_new_coordinate}, initial conditions \eqref{eqn_initial_condition_G/H} and \eqref{eqn_initial_condition_Taub-NUT} are respectively transformed to critical points
\begin{equation}
\begin{split}
&P_{\mathbb{HP}^{m}}:=\left(\frac{1}{3},\frac{1}{3},0,1,\frac{1}{3},0,0\right), P_{\bullet}=\left(\frac{1}{n},\frac{1}{n},\frac{1}{n},1,\frac{1}{n},\frac{1}{n},0\right).
\end{split}
\end{equation}
We compute linearizations at these points and establish the local existence of Ricci solitons. The linearization at $P_{\mathbb{HP}^{m}}$ is
\begin{equation}
\begin{bmatrix}
-\frac{4}{9}&\frac{4}{9}&0&\frac{4}{9}&\frac{4}{3}&0&\frac{\epsilon}{3}\\
\frac{2}{9}&-\frac{2}{9}&0&-\frac{4}{9}&\frac{4}{3}&0&\frac{\epsilon}{3}\\
0&0&-\frac{2}{3}&0&0&\frac{4m+8}{3}&\frac{\epsilon}{2}\\
1&-1&0&0&0&0&0\\
\frac{2}{9}&\frac{1}{9}&0&0&0&0&-\frac{\epsilon}{6}\\
0&0&0&0&0&\frac{2}{3}&0\\
0&0&0&0&0&0&\frac{2}{3}\\
\end{bmatrix}.
\end{equation}
The critical point is hyperbolic. Unstable eigenvectors all have the same eigenvalue $\frac{2}{3}$. These eigenvectors are
\begin{equation}
\label{eqn_unstable_eigen_zeta}
\begin{split}
&u_1=\begin{bmatrix}
-4m(m+2)\\
-4m(m+2)\\
3(m+2)\\
0\\
-2m(m+2)\\
3\\
0
\end{bmatrix},
u_2=\begin{bmatrix}
-4\\
2\\
0\\
-9\\
-1\\
0\\
0
\end{bmatrix},
u_3=\begin{bmatrix}
-4m\epsilon\\
-4m\epsilon\\
3\epsilon\\
0\\
-2(m+1)\epsilon\\
0\\
8
\end{bmatrix},
u_4=\begin{bmatrix}
-2\\
-2\\
0\\
0\\
-1\\
0\\
0
\end{bmatrix}
\end{split}.
\end{equation}
Hence the general linearized solution near $P_{\mathbb{HP}^{m}}$ is in the form of 
\begin{equation}
\label{eqn_linearized_zeta}
P_{\mathbb{HP}^{m}}+s_1 e^\frac{2\eta}{3}u_1+s_2 e^\frac{2\eta}{3}u_2+s_3 e^\frac{2\eta}{3}u_3+s_4 e^\frac{2\eta}{3}u_4
\end{equation}
for some constants $s_i\in \mathbb{R}$. Scalar multiplication of $(s_1,s_2,s_3,s_4)$ is modded out by fixing $\sum_{i=1}^4s_i^2=1$. There is a 1 to 1 correspondence between germs of linearized solution \eqref{eqn_linearized_zeta} and $(s_1,s_2,s_3,s_4)\in\mathbb{S}^3$.
By Hartman--Grobman theorem, we can use $\zeta(s_1,s_2,s_3,s_4)$ to denote the actual solution that approaches \eqref{eqn_linearized_zeta} near $P_{\mathbb{HP}^{m}}$.  Moreover, from Theorem 4.5 in \cite{coddington_theory_1955}, there is some $\delta>0$ that
\begin{equation}
\label{eqn_linearized_zeta2}
\zeta(s_1,s_2,s_3,s_4) = P_{\mathbb{HP}^{m}}+s_1e^\frac{2\eta}{3}u_1+s_2e^\frac{2\eta}{3}u_2+s_3e^\frac{2\eta}{3}u_3+s_4e^\frac{2\eta}{3}u_4+O\left(  e^{\left(\frac{2}{3}+\delta\right)\eta} \right).
\end{equation}

It is clear that $P_{\mathbb{HP}^{m}}\in \mathcal{RS}_{\text{Einstein}}$. We set $s_1>0$ so that $Y_3$ is positive initially. The parameter $s_2$ squashes the $\mathbb{S}^3$-fiber. By \eqref{eqn_initial_condition_G/H} and the power series expansions of $a$ and $b$, we have 
\begin{equation}
\lim\limits_{\eta\to-\infty} \frac{X_2-X_1}{Y_3}=\lim\limits_{t\to 0} \frac{c^2}{b}\left(\frac{\dot{b}}{b}-\frac{\dot{a}}{a}\right)=\frac{c_0^2}{2}(\dddot{b}(0)-\dddot{a}(0)).
\end{equation}
In particular, if $s_2=0$, the corresponding component vanishes and the integral curve remains in $\mathcal{RS}_{\text{round}}$. We have
\begin{equation}
\label{eqn_gradient_at_PHP}
\nabla Q(P_{\mathbb{HP}^{m}})=
\begin{bmatrix}
\frac{2}{3}\\
\frac{4}{3}\\
0\\
-\frac{4}{9}\\
4\\
\frac{16m(m+2)}{3}\\
(n-1)\frac{\epsilon}{2}
\end{bmatrix},\quad \nabla H=\begin{bmatrix}
1\\
2\\
4m\\
0\\
0\\
0\\
0
\end{bmatrix},\quad \nabla W= \begin{bmatrix}
0\\
0\\
0\\
0\\
0\\
0\\
1
\end{bmatrix}.
\end{equation}
The eigenvector $u_3$ is the only one in \eqref{eqn_unstable_eigen_zeta} that is not tangent to $\mathcal{RS}_{\text{steady}}$. Thus, we set $s_3\geq 0$. Each $\zeta(s_1,s_2,0,s_4)$ stays in $\mathcal{RS}_{\text{steady}}$ and it represents a steady Ricci soliton. The eigenvector $u_4$ is the only one in \eqref{eqn_unstable_eigen_zeta} that is not tangent to $\mathcal{RS}_{\text{Einstein}}$. 
The functions $Q$ and $H-1$ are negative initially along $\zeta(s_1,s_2,s_3,s_4)$ with $s_4> 0$. Computations above show that there exists a continuous 3-parameter family of non-Einstein, non-shrinking Ricci solitons on a tubular neighborhood around $\mathbb{HP}^m$ in $\mathbb{HP}^{m+1}\backslash\{*\}$. The subfamily $\zeta(s_1,s_2,s_3,0)$ recovers the Einstein metrics on $\mathbb{HP}^{m+1}\backslash\{*\}$ in \cite{chi2021einstein}.

The other critical point $P_\bullet$ is also hyperbolic. The linearization at $P_\bullet$ is
\small
\begin{equation}
\begin{bmatrix}
-\frac{16m^2+20m+4}{(4m+3)^2} & \frac{4}{(4m+3)^2} & \frac{8m}{(4m+3)^2} & \frac{(8m+4)}{(4m+3)^2} & \frac{4}{4m+3} & \frac{8m}{4m+3} & \frac{(2m+1)\epsilon}{4m+3}\\
\frac{2}{(4m+3)^2} & -\frac{16m^2+20m+2}{(4m+3)^2} & \frac{8m}{(4m+3)^2} & -\frac{4}{(4m+3)^2} &  \frac{4}{4m+3} & \frac{8m}{4m+3} & \frac{(2m+1)\epsilon}{4m+3}\\
\frac{2}{(4m+3)^2} & \frac{4}{(4m+3)^2} & -\frac{16m^2+12m+6}{(4m+3)^2} & -\frac{4}{(4m+3)^2} &  \frac{4m+8}{4m+3} & \frac{4m-4}{4m+3} & \frac{(2m+1)\epsilon}{4m+3}\\
1 & -1 & 0 & 0 & 0 & 0 & 0\\
\frac{2}{(4m+3)^2} & \frac{(1-4m)}{(4m+3)^2} & \frac{8m}{(4m+3)^2} & 0 & 0 & 0 & -\frac{\epsilon}{8m+6}\\
\frac{2}{(4m+3)^2} & \frac{4m+7}{(4m+3)^2} & -\frac{6}{(4m+3)^2} & 0 & 0 & 0 & -\frac{\epsilon}{8m+6}\\
0 & 0 & 0 & 0 & 0 & 0 & \frac{2}{4m+3}
\end{bmatrix}.
\end{equation}
\normalsize
Each unstable eigenvector of $P_\bullet$ has $\frac{2}{n}$ as its eigenvalue. These unstable eigenvectors are
$$
v_1=\begin{bmatrix}
-4m\\
-4m\\
3\\
0\\
2m\\
-(2m+3)\\
0
\end{bmatrix}
v_2=\begin{bmatrix}
-(8m+4)\\
2\\
2\\
-(4m+3)^2\\
-1\\
-1\\
0
\end{bmatrix},\quad
v_3=\begin{bmatrix}
0\\
0\\
0\\
0\\
-\epsilon\\
-\epsilon\\
4
\end{bmatrix},\quad
v_4=\begin{bmatrix}
-2\\
-2\\
-2\\
0\\
-1\\
-1\\
0
\end{bmatrix}.
$$
Therefore, there exists a 3-parameter family of integral curves $\gamma(s_1,s_2,s_3,s_4)$ with $(s_1,s_2,s_3,s_4)\in\mathbb{S}^3$ that emanate from $P_{\bullet}$ such that
\begin{equation}
\label{eqn_linearized gamma negative einstein}
\gamma(s_1,s_2,s_3,s_4)=P_{\bullet}+s_1e^\frac{2\eta}{n}v_1+s_2e^\frac{2\eta}{n}v_2+s_3e^\frac{2\eta}{n}v_3+s_4e^\frac{2\eta}{n}v_4+O\left( e^{\left(\frac{2}{n}+\delta\right)\eta} \right).
\end{equation}

It is clear that $P_{\bullet}\in \mathcal{RS}_{\text{Einstein}}$. Furthermore, we have 
\begin{equation}
\label{eqn_gradient_at_H}
\nabla Q(P_\bullet)=
\begin{bmatrix}
\frac{2}{n}\\
\frac{4}{n}\\
\frac{8m}{n}\\
-\frac{8m+4}{n^2}\\
\frac{4(2m+3)(2m+1)}{n}\\
\frac{8m(2m+1)}{n}\\
(n-1)\frac{\epsilon}{2}
\end{bmatrix},
\end{equation}
while $\nabla H$ and $\nabla W$ are constant vector as in \eqref{eqn_gradient_at_PHP}. If $s_1=0$, the integral curve stays in $\mathcal{RS}_{\text{FS}}$. If $s_2=0$, the integral curve stays in $\mathcal{RS}_{\text{round}}$. We again set $s_3\geq 0$. Each $\zeta(s_1,s_2,0,s_4)$ stays in the invariant set $\mathcal{RS}_{\text{steady}}$ and it represents a steady Ricci soliton. As $v_4$ is not perpendicular to $\nabla Q (P_\bullet)$ or $\nabla H$, we set $s_4> 0$ so that $Q<0$ and $H<1$ initially along $\gamma(s_1,s_2,s_3,s_4)$. The subfamily $\gamma(s_1,s_2,s_3,0)$ recovers the Einstein metrics on $\mathbb{H}^{m+1}$ in \cite{chi2021einstein}.

To prove the global existence, we consider the following set
\begin{equation}
\label{eqn_setA}
\begin{split}
\mathcal{A}&:=\mathcal{RS}\cap\{Y_1\leq 1\}\cap\{X_1-X_2\leq 0\}\\
&\quad \cap\{Y_2-Y_3\geq 0\}\cap \left\{2(Y_2-Y_3)+X_3-X_2\geq 0\right\} \cap \{X_1,X_2,X_3\geq 0\}.
\end{split}
\end{equation}
The set $\mathcal{A}$ is extended from the compact invariant set $\mathcal{S}$ in \cite{chi2021einstein}. Proposition 4.2-4.3, and Lemma 4.4 in \cite{chi2021einstein} are easily generalized, and $\mathcal{A}$ is compact and invariant. For the sake of mathematical rigor, we present the complete proof below. Some readers may find certain arguments repetitive.
\begin{proposition}
\label{prop_Ais compact}
The set $\mathcal{A}$ is compact.
\end{proposition}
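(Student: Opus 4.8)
The plan is to show that $\mathcal{A}$ is closed and bounded in $\mathbb{R}^7$. Closedness is immediate, since $\mathcal{A}$ is a finite intersection of sublevel and superlevel sets of the polynomial functions $Q,\,H,\,W,\,Y_1,\,Y_2-Y_3,\,X_1-X_2,\,2(Y_2-Y_3)+X_3-X_2$ and the $X_i$, all under non-strict inequalities. Hence the real content is boundedness, which I would establish coordinate by coordinate, noting in advance that the two ``invariance'' constraints $\{X_1-X_2\le 0\}$ and $\{2(Y_2-Y_3)+X_3-X_2\ge 0\}$ are not needed here.

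First I would bound the $X_i$ directly: from $H=X_1+2X_2+4mX_3\le 1$ together with $X_1,X_2,X_3\ge 0$, each $X_i$ lies in a bounded interval (e.g. $0\le X_1\le 1$, $0\le X_2\le\frac12$, $0\le X_3\le\frac{1}{4m}$), so $G=X_1^2+2X_2^2+4mX_3^2$ is bounded (in fact $G\le H\le 1$). Likewise $Y_1\in[0,1]$. It then remains to bound $Y_2,\,Y_3,\,W$, and for this I would use the constraint $Q\le 0$, i.e. $G+R_s+(n-1)\tfrac{\epsilon}{2}W\le 1$.

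The crux is to show $R_s\ge 0$ on $\mathcal{A}$, which is not obvious since $R_s$ (the rescaled scalar curvature of the principal orbit) is sign-indefinite in general. Expanding its definition gives
\[
R_s=-2Y_1^2Y_2^2-4mY_1^2Y_3^2+8Y_2^2-8mY_3^2+(16m^2+32m)Y_2Y_3 .
\]
On $\mathcal{A}$ one has $Y_1\le 1$, so the two negative $Y_1^2$-terms are bounded below by $-2Y_2^2$ and $-4mY_3^2$, giving $R_s\ge 6Y_2^2-12mY_3^2+(16m^2+32m)Y_2Y_3$. The constraint $Y_2-Y_3\ge 0$ then yields $Y_2Y_3\ge Y_3^2$, whence $R_s\ge 6Y_2^2+(16m^2+20m)Y_3^2\ge 0$. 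I expect this step to be the main obstacle: one must absorb the indefinite cross term using precisely the inequalities $Y_1\le 1$ and $Y_2\ge Y_3$ that carve $\mathcal{A}$ out of $\mathcal{RS}$, and this is exactly where the geometry (a round or sub-round $\mathbb{S}^3$-fibre) enters.

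With the quadratic lower bound $R_s\ge 6Y_2^2+(16m^2+20m)Y_3^2$ in hand, the remaining bounds are routine. Since $G\ge 0$, $W\ge 0$ and $\epsilon\ge 0$, the inequality $Q\le 0$ forces $R_s\le 1$, and the lower bound then gives $Y_2^2\le\frac16$ and $Y_3^2\le\frac{1}{16m^2+20m}$, bounding $Y_2$ and $Y_3$. For the expanding normalization $\epsilon=1$ the same inequality gives $(n-1)\tfrac12 W\le 1$, bounding $W$; for the steady case $\epsilon=0$ the coordinate $W$ decouples and the relevant trajectories lie in the invariant slice $\mathcal{RS}_{\text{steady}}=\{W=0\}$, where $W$ is trivially bounded. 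Assembling all the bounds shows $\mathcal{A}$ is bounded and closed, hence compact.
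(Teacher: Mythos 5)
Your proof is correct and takes essentially the same route as the paper: from $1\geq Q+1=G+R_s+(n-1)\frac{\epsilon}{2}W\geq R_s$ together with $Y_1\leq 1$ and $Y_2\geq Y_3$ one gets $R_s\geq 6Y_2^2\geq 0$, bounding the $Y_i$, after which the $X_i$ and $W$ are bounded via $H\leq 1$ and $Q\leq 0$. Your explicit absorption of the indefinite cross term in $R_s$ and your case distinction for $W$ when $\epsilon=0$ simply make precise details that the paper's one-line proof leaves implicit.
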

\begin{proof}
Since $Q\leq 0$ and $W\geq 0$ in $\mathcal{RS}$, we have 
$$
1\geq Q+1=G+R_s+(n-1)\frac{\epsilon}{2}W\geq R_s.
$$
by \eqref{eqn_curvature function}.
Since $Y_1\leq 1$ and $Y_2-Y_3\geq 0$, the above inequality implies $\frac{1}{6}\geq Y_2^2$. Thus, each $Y_i$ is bounded. The boundedness of each $X_i$ and $W$ follows from $Q\leq 0$.
\end{proof}

\begin{proposition}
\label{prop_technical_1}
If $2(Y_2-Y_3)+X_3-X_2=0$ on $\mathcal{RS}\cap \{Y_1\leq 1\}\cap \{X_3\geq 0\}\cap \{Y_2-Y_3\geq 0\}$, then
$$
1+4(m-1)Y_3-4Y_2+2Y_1^2\left(Y_2+Y_3\right)\geq 0
$$
holds.
\end{proposition}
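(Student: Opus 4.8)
The plan is to convert the hypotheses, which mix the shape-operator variables $X_i$ with the metric variables $Y_i$, into a single \emph{purely metric} inequality, and then to settle the resulting polynomial inequality by elementary estimates. The conceptual point is that the conclusion contains no $X_i$ at all, so the constraint $2(Y_2-Y_3)+X_3-X_2=0$ and the sign condition $X_3\ge 0$ must be used for the sole purpose of feeding a lower bound for $G$ into the inequality $Q\le 0$, thereby eliminating the $X_i$.

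First I would exploit membership in $\mathcal{RS}$. Since $W\ge 0$ and $\epsilon\ge 0$ for non-shrinking solitons, $Q\le 0$ gives $G+R_s\le 1$. Next, the hypothesis $X_2=X_3+2(Y_2-Y_3)$ together with $X_3\ge 0$ and $Y_2-Y_3\ge 0$ yields $X_2\ge 2(Y_2-Y_3)\ge 0$, whence
\[
G=X_1^2+2X_2^2+4mX_3^2\ge 2X_2^2\ge 8(Y_2-Y_3)^2 .
\]
Combining the two bounds removes the $X_i$ and produces the single scalar estimate $L:=R_s+8(Y_2-Y_3)^2\le 1$, which now depends only on $Y_1,Y_2,Y_3$.

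The remaining, and main, step is the algebraic implication $L\le 1\Rightarrow \Phi\ge 0$, where I write $\Phi:=1+4(m-1)Y_3-4Y_2+2Y_1^2(Y_2+Y_3)$, under the standing constraints $0\le Y_3\le Y_2$, $Y_1\le 1$, and $m\ge 1$. Expanding $R_s$ from \eqref{eqn_curvature function} and collecting terms,
\[
L=(16-2Y_1^2)Y_2^2+(16m^2+32m-16)Y_2Y_3+(8-8m)Y_3^2-4mY_1^2Y_3^2 .
\]
The idea is to read off an upper bound for $Y_2$ alone. Isolating $(16-2Y_1^2)Y_2^2$ and using $Y_2\ge Y_3$ to estimate the positive cross term from below, the correction $-(16m^2+32m-16)Y_2Y_3+(8m-8)Y_3^2+4mY_1^2Y_3^2$ is bounded above by $\bigl[-16m^2-20m+8\bigr]Y_3^2\le 0$ for $m\ge 1$, $Y_1\le 1$; hence $(16-2Y_1^2)Y_2^2\le L\le 1$.

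Finally I would close the inequality by comparing two elementary bounds on $Y_2$. The previous step gives $Y_2\le (16-2Y_1^2)^{-1/2}$, while discarding the nonnegative terms $4(m-1)Y_3+2Y_1^2Y_3$ gives $\Phi\ge 1-(4-2Y_1^2)Y_2$. A one-line check, $(4-2Y_1^2)^2=16-16Y_1^2+4Y_1^4\le 16-2Y_1^2$ for $Y_1^2\le 1$, shows $(16-2Y_1^2)^{-1/2}\le (4-2Y_1^2)^{-1}$, so $(4-2Y_1^2)Y_2\le 1$ and therefore $\Phi\ge 0$. I expect the genuine obstacle to be purely the bookkeeping in the middle step: one must check that the quadratic correction has a definite sign across the whole range $m\ge 1$, $Y_1\le 1$, $Y_2\ge Y_3$, which is precisely what forces the use of the hypothesis $Y_2\ge Y_3$ rather than a cruder bound. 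The equality configuration $Y_1=Y_3=0$, $Y_2=\tfrac14$ (for which $L=1$ and $\Phi=0$) shows the estimate is sharp, so no slack in these bounds may be wasted.
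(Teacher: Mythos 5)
Your proposal is correct and follows essentially the same route as the paper: both use $Q\le 0$ together with $W\ge 0$ and the constraint $X_2=X_3+2(Y_2-Y_3)$, $X_3\ge 0$ to get $8(Y_2-Y_3)^2+R_s\le 1$, then use $Y_2\ge Y_3$, $Y_1\le 1$, $m\ge 1$ to discard the mixed and $Y_3^2$ terms and obtain $(16-2Y_1^2)Y_2^2\le 1$, and finally compare $(4-2Y_1^2)^2\le 16-2Y_1^2$ to conclude. The sharpness remark at $Y_1=Y_3=0$, $Y_2=\tfrac14$ is a nice addition not present in the paper.
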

\begin{proof}
The proof is almost identical to \cite[Proposition 4.3]{chi2021einstein}. The difference is we do not have the equality $Q=0$. The inequality $Q\leq 0$ does not bring extra difficulties to the argument, as shown in the following.

If $2(Y_2-Y_3)+X_3-X_2=0$, by \eqref{eqn_new_cohomogneity_one_conservation_law}, we have
\begin{equation}
\begin{split}
1&\geq Q+1\\
&=G+R_s+(n-1)\frac{\epsilon}{2}W\\
&\geq 2X_2^2+R_s\\
&= 2\left(X_3+2(Y_2-Y_3)\right)^2 +8Y_2^2-8mY_3^2 +4m(4m+8)Y_2Y_3-2Y_1^2Y_2^2-4mY_1^2Y_3^2.
\end{split}
\end{equation}
Since $X_3\geq 0$ and $Y_2-Y_3\geq 0$, we can drop $X_3$ above. The computation continues as
\begin{equation}
\label{eqn_key computation}
\begin{split}
1&\geq 8(Y_2-Y_3)^2+8Y_2^2-8mY_3^2 +4m(4m+8)Y_2Y_3-2Y_1^2Y_2^2-4mY_1^2Y_3^2\\
&=\left(16-2Y_1^2\right)Y_2^2+\left(8-8m-4mY_1^2\right)Y_3^2+(4m(4m+8)-16)Y_2Y_3\\
&\geq  \left(16-2Y_1^2\right)Y_2^2
\end{split}.
\end{equation}
The last inequality holds since $Y_2\geq Y_3$ and $Y_1^2\leq 1$.

By \eqref{eqn_key computation}, we have 
$\frac{1}{16-2Y_1^2}\geq Y_2^2$ if the equality $2(Y_2-Y_3)+X_3-X_2=0$ holds. Furthermore, we have
$$
\left(\frac{1}{4-2Y_1^2}\right)^2\geq \frac{1}{16-2Y_1^2}\geq Y_2^2.
$$
Therefore,
$$
\frac{1}{4}+\frac{1}{2}Y_1^2Y_2\geq Y_2,
$$
and the inequality
$$1+4(m-1)Y_3-4Y_2+2Y_1^2\left(Y_2+Y_3\right)\geq 0
$$
holds.
\end{proof}

\begin{lemma}
\label{lem_invariant_setA}
The set
\begin{equation}
\begin{split}
\mathcal{A}&:=\mathcal{RS}\cap\{Y_1\leq 1\}\cap\{X_1-X_2\leq 0\}\\
&\quad \cap\{Y_2-Y_3\geq 0\}\cap \left\{2(Y_2-Y_3)+X_3-X_2\geq 0\right\} \cap \{X_1,X_2,X_3\geq 0\}
\end{split}
\end{equation}
is invariant.
\end{lemma}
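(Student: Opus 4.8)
The plan is to establish flow-invariance by the standard subtangency (Nagumo) criterion. Since Proposition \ref{prop_phase_space_for_Ricci_solitons} already gives that $\mathcal{RS}$ is invariant, it suffices to show that along any trajectory meeting $\partial\mathcal{A}$ the field $V$ of \eqref{eqn_Polynomial_soliton_equation} points into $\mathcal{A}$. Writing each extra constraint as $\{f\ge 0\}$ or $\{f\le 0\}$, I would check face by face that $f'$ has the correct sign on $\{f=0\}\cap\mathcal{A}$, computing $f'$ from \eqref{eqn_Polynomial_soliton_equation} while freely using every other defining inequality of $\mathcal{A}$. Because each face is tested with all remaining constraints retained, the corners where several constraints are simultaneously active are covered automatically.

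Most faces are immediate. On $\{Y_1=1\}$ one has $Y_1'=Y_1(X_1-X_2)\le 0$ by the constraint $X_1-X_2\le0$. On $\{X_1=X_2\}$ the drift term $(X_1-X_2)(G-\tfrac{\epsilon}{2}W-1)$ vanishes and $(X_1-X_2)'=R_1-R_2=4(Y_1^2-1)(Y_2^2+mY_3^2)\le0$, using $Y_1\le1$. On each $\{X_i=0\}$ one gets $X_i'=R_i+\tfrac{\epsilon}{2}W$, so it is enough to note $R_1,R_2\ge0$ trivially and $R_3=Y_3\big((4m+8)Y_2-(2Y_1^2+4)Y_3\big)\ge0$ from $Y_2\ge Y_3$ and $Y_1\le1$. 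On $\{Y_2=Y_3\}$ a short computation gives $(Y_2-Y_3)'=2Y_2(X_3-X_2)$, and on this face the active constraint $2(Y_2-Y_3)+X_3-X_2\ge0$ reduces to $X_3\ge X_2$, so $(Y_2-Y_3)'\ge0$.

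The real work is the face $\{\Phi=0\}$ with $\Phi:=2(Y_2-Y_3)+X_3-X_2$, and this is where Proposition \ref{prop_technical_1} is designed to be used. Computing $\Phi'$, the key observation is that all the $G-\tfrac{\epsilon}{2}W$ contributions assemble into $(G-\tfrac{\epsilon}{2}W)\Phi$, which vanishes on the face; moreover the purely polynomial remainder carries a hidden factor, namely $R_3-R_2=(Y_2-Y_3)\big((2Y_1^2-4)Y_2+(2Y_1^2+4m+4)Y_3\big)$. After substituting $X_3-X_2=-2(Y_2-Y_3)$ and collecting, everything factors as
\[
\Phi'=(Y_2-Y_3)\Big[\big(1+4(m-1)Y_3-4Y_2+2Y_1^2(Y_2+Y_3)\big)+(1-2X_2)\Big].
\]
The first bracketed term is precisely the quantity shown nonnegative in Proposition \ref{prop_technical_1}, and $1-2X_2\ge0$ since $H=X_1+2X_2+4mX_3\le1$ together with $X_1,X_3\ge0$ forces $X_2\le\tfrac12$. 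As $Y_2-Y_3\ge0$, this gives $\Phi'\ge0$ and closes the last face.

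The main obstacle is exactly this $\Phi$-face: the collapse of the drift into $(G-\tfrac{\epsilon}{2}W)\Phi$ and the $(Y_2-Y_3)$ factor in $R_3-R_2$ are what make the estimate clean, and one must remember to invoke $H\le1$ from $\mathcal{RS}$ (not only the $Y$-inequality of Proposition \ref{prop_technical_1}) in order to absorb the leftover $1-2X_2$. Everything else is sign bookkeeping.
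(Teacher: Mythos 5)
Your proposal is correct and follows essentially the same route as the paper: a face-by-face subtangency check, with $R_i\geq 0$ (using $Y_1\leq 1$ and $Y_2\geq Y_3$) handling the faces $\{X_i=0\}$, the constraint $2(Y_2-Y_3)+X_3-X_2\geq 0$ handling $\{Y_2=Y_3\}$, and the identity you derive for $\Phi'$ on $\{\Phi=0\}$ matching the paper's equation \eqref{eqn_key_computation1}, closed by Proposition \ref{prop_technical_1} together with $X_2\leq\tfrac12$ from $H\leq 1$. The only differences are presentational (you evaluate directly on each face and make the factorization of $R_3-R_2$ explicit, whereas the paper writes the full derivative identities and then restricts).
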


\begin{proof}
Since
\begin{equation}
\begin{split}
Y_1'&=Y_1(X_1-X_2),\\
(X_1-X_2)'&=(X_1-X_2)\left(G-\frac{\epsilon}{2}W-1\right)+(Y_1^2-1)(4Y_2^2+4mY_3^2),
\end{split}
\end{equation}
the vector field $V$ on $\mathcal{A}\cap \{Y_1-1\}$ and $\mathcal{A}\cap \{X_1-X_2=0\}$ points inward.

In order to check the face $\mathcal{A}\cap\{X_i=0\}$, it suffices to show that $R_i\geq 0$ in $\mathcal{A}$. It is obvious that $R_1\geq 0$. From defining inequalities $Y_1\leq 1$ and $Y_2-Y_3\geq 0$, we have
\begin{equation}
\label{eqn_Ricci_curvature_R2_R3}
\begin{split}
R_2&=4Y_2^2-2Y_1^2Y_2^2+4mY_3^2\geq 2Y_2^2+4mY_3^2\geq 0 \\
R_3&=(4m+8)Y_2Y_3-2Y_1^2Y_3^2-4Y_3^2\geq (4m+2)Y_3^2\geq 0
\end{split}.
\end{equation}
Therefore, the vector field $V$ on each face $\mathcal{A}\cap \{X_i=0\}$ points inward. Furthermore, from $H\leq 1$, we know that 
\begin{equation}
\label{eqn_upper_bound_for_X}
X_1\leq 1,\quad X_2\leq \frac{1}{2},\quad X_3\leq \frac{1}{4m}
\end{equation}
in $\mathcal{A}$.

For the face $\mathcal{A}\cap \left\{Y_2-Y_3= 0\right\}$, we have 
\begin{equation}
\label{eqn_key_computation0.5}
\begin{split}
(Y_2-Y_3)'&=(Y_2-Y_3)\left(G-\frac{\epsilon}{2}W-X_2\right)+2Y_3(X_3-X_2)\\
&\geq (Y_2-Y_3)\left(G-\frac{\epsilon}{2}W-X_2\right)+4Y_3(Y_3-Y_2).
\end{split}
\end{equation}
Thus, the vector field $V$ on each face $\mathcal{A}\cap \{Y_2-Y_3=0\}$ points inward.

For the face $\mathcal{A}\cap \left\{2(Y_2-Y_3)+X_3-X_2=0\right\}$, we have
\begin{equation}
\label{eqn_key_computation1}
\begin{split}
&\left(2(Y_2-Y_3)+X_3-X_2\right)'\\
&= \left(2(Y_2-Y_3)+X_3-X_2\right)\left(G-\frac{\epsilon}{2}W-1+4Y_3\right)\\
&\quad +(Y_2-Y_3)\left(2-2X_2+4(m-1)Y_3-4Y_2+2Y_1^2(Y_2+Y_3)\right)\\
&\geq (Y_2-Y_3)\left(1+4(m-1)Y_3-4Y_2+2Y_1^2(Y_2+Y_3)\right). \\
&\quad \text{by $2(Y_2-Y_3)+X_3-X_2=0$ and \eqref{eqn_upper_bound_for_X}}
\end{split}
\end{equation}
By Proposition \ref{prop_technical_1}, the vector field restricted on $\mathcal{A}\cap \left\{2(Y_2-Y_3)+X_3-X_2=0\right\}$ points inward. The proof is complete.
\end{proof}

We are ready to prove the global existence of Ricci solitons represented by the integral curves $\zeta(s_1,s_2,s_3,s_4)$ and  $\gamma(s_1,s_2,s_3,s_4)$.

\begin{lemma}
\label{lem_long existing_zeta_gamma}
Integral curves in $\{\zeta(s_1,s_2,s_3,s_4)\mid \sum_{i=1}^4s_i^2=1, s_1,s_4>0, s_2,s_3\geq 0\}$ and $\{\gamma(s_1,s_2,s_3,s_4)\mid \sum_{i=1}^4s_i^2=1, s_1,s_2,s_3\geq 0,s_4>0\}$ are defined on $\mathbb{R}$. Metrics represented by these integral curves are complete.
\end{lemma}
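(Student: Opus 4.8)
The plan is to trap each curve in the compact invariant set $\mathcal{A}$ for all forward $\eta$ and then transfer this back to the original radial variable $t$ via Proposition~\ref{prop_etainfty is tinfty}. Backward existence is immediate: since $P_{\mathbb{HP}^{m}}$ and $P_\bullet$ are hyperbolic and $\zeta$, $\gamma$ lie on the respective unstable manifolds, the expansions \eqref{eqn_linearized_zeta2} and \eqref{linearized gamma negative einstein} show that both families are defined on some interval $(-\infty,\eta_0]$ and converge to the corresponding critical point as $\eta\to-\infty$. It therefore suffices to control the forward flow.

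First I would check that, for $\eta$ close to $-\infty$, the curves already lie in $\mathcal{A}$. Feeding the explicit unstable eigenvectors $u_1,\dots,u_4$ (resp.\ $v_1,\dots,v_4$) together with the sign constraints $s_1,s_4>0$, $s_2,s_3\ge0$ (resp.\ $s_1,s_2,s_3\ge0$, $s_4>0$) into the leading-order term of \eqref{eqn_linearized_zeta2} (resp.\ \eqref{linearized gamma negative einstein}), one verifies each defining inequality of $\mathcal{A}$: the conditions $Y_1\le1$, $X_1-X_2\le0$, $Y_2-Y_3\ge0$, $2(Y_2-Y_3)+X_3-X_2\ge0$, $X_1,X_2,X_3\ge0$, together with the $\mathcal{RS}$-conditions $Q\le0$, $H\le1$, $W\ge0$, $Y_i\ge0$ already discussed around \eqref{eqn_gradient_at_PHP} and \eqref{eqn_gradient_at_H}. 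Lemma~\ref{lem_invariant_setA} then keeps the curves in $\mathcal{A}$ for all $\eta\ge\eta_0$.

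Since $\mathcal{A}$ is compact by Proposition~\ref{prop_Ais compact}, the forward trajectory stays in a fixed bounded subset of $\mathbb{R}^7$; as the field $V$ is polynomial, the escape lemma forces the maximal forward interval of existence to be all of $[\eta_0,\infty)$. Together with backward existence, $\zeta$ and $\gamma$ are defined on $\mathbb{R}$. For the steady members ($s_3=0$, so $\epsilon=0$), completeness of the metric follows at once from the first assertion of Proposition~\ref{prop_etainfty is tinfty}.

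The genuine obstacle is completeness in the expanding case $\epsilon=1$, for which Proposition~\ref{prop_etainfty is tinfty} requires $\lim_{\eta\to\infty}W=0$. Writing $u=\trace(L)-\dot f$, so that $W=u^{-2}$, the soliton equations yield the clean identity $\dot u=\tfrac12-\trace(L^2)$ in the $t$-variable; hence $W\to0$ is equivalent to $u\to\infty$, which would follow from $\trace(L^2)\to0$. In $\mathcal{A}$ the bound $Q\le0$ gives $G\le1-R_s-(n-1)\tfrac12W$ and therefore $W'=2W\bigl(G-\tfrac12W\bigr)\le 2W\bigl(1-\tfrac n2W\bigr)$, which confines $W$ to $[0,2/n]$ but does not yet force it to $0$. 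To close the gap I would study the forward limit set $\Omega\subseteq\mathcal{A}$---nonempty, compact, connected and invariant---and show that it is the single cone critical point $(0,0,0,Y_1^{*},0,0,0)$, at which $X_1=X_2=X_3=Y_2=Y_3=W=0$ and so $\trace(L^2)=0$. Establishing this convergence, that is, excluding every other critical point and all recurrent behaviour in $\mathcal{A}$ by exploiting the monotonicity relations \eqref{eqn_characteriaztion_of_non_trivial_soliton} and the strict inequalities $Q<0$, $H<1$ that single out non-Einstein solitons, is the hardest step and is where the asymptotic analysis is brought in. Once $W\to0$ is secured, the second assertion of Proposition~\ref{prop_etainfty is tinfty} gives completeness.
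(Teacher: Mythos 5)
Your overall strategy is the paper's: show the curve enters $\mathcal{A}$ near the critical point, invoke Lemma~\ref{lem_invariant_setA} and Proposition~\ref{prop_Ais compact} to trap it forward in a compact set, conclude global existence from the escape lemma, and pass to completeness via Proposition~\ref{prop_etainfty is tinfty}. One caveat on the trapping step: several defining functions of $\mathcal{A}$ (for instance $X_1-X_2$, $1-Y_1$, $Y_2-Y_3$) vanish identically along the curve, to all orders, when $s_2=0$ or $s_1=0$, so the leading-order eigenvector check does not place the curve in the interior of $\mathcal{A}$ in those boundary cases. The paper deals with this by noting that such curves lie in the invariant submanifolds $\mathcal{RS}_{\text{round}}$ or $\mathcal{RS}_{\text{FS}}$ and that non-transversal escape through the remaining faces is excluded by the computations \eqref{eqn_key_computation0.5} and \eqref{eqn_key_computation1}; you should say this explicitly rather than rely only on the sign of the linearized solution.

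The genuine gap is the expanding case. You correctly identify that Proposition~\ref{prop_etainfty is tinfty} requires $\lim_{\eta\to\infty}W=0$, correctly observe that $Q\le 0$ only confines $W$ to a bounded interval, and then leave the decisive convergence as an unproven ``hardest step.'' Your proposed route (classify the forward limit set in $\mathcal{A}$ and exclude all recurrence) is not carried out, and as sketched it is close to circular: ruling out the other critical points and recurrent behaviour is precisely the kind of asymptotic control one would normally derive \emph{after} knowing $W\to 0$. The paper closes this much more cheaply by importing the estimate $-\dot f\ge \alpha t+\beta$ with $\alpha,\beta>0$ for non-Einstein expanders from \cite[Propositions 1.13 and 1.18]{buzano_non-kahler_2015}; since $X_i\ge 0$ in $\mathcal{A}$ gives $\trace(L)\ge 0$, this yields $W=(\trace(L)-\dot f)^{-2}\le(\alpha t+\beta)^{-2}\to 0$ directly (this is how Proposition~\ref{prop_points with Y1 nonzero} argues). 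Without that input, or a completed substitute for it, your proof of completeness for the curves with $s_3>0$ is incomplete; the steady case and the global existence on $\mathbb{R}$ are fine as you present them.
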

\begin{proof}
Functions $Y_2-Y_3$, $2(Y_2-Y_3)+X_3-X_2$, $X_1$, and $X_2$ are positive at $P_{\mathbb{HP}^m}$. From \eqref{eqn_linearized_zeta}, it is also clear that functions $1-Y_1$, $X_1-X_2$ and $X_3$ are non-negative for an integral curve $\zeta(s_1,s_2,s_3,s_4)$ with $s_1,s_4>0$ and $s_2,s_3\geq 0$. In particular, if $s_2=0$, the integral curve lies on the invariant set $\mathcal{RS}_{\text{round}}\cap\mathcal{A}$. If $s_2>0$, the integral curve is in the interior of $\mathcal{A}$ once it leaves $P_{\mathbb{HP}^m}$.

From the fact that $\mathcal{RS}_{\text{round}}$ and $\mathcal{RS}_{\text{FS}}$ are invariant and computation \eqref{eqn_key_computation1}, it is impossible for $\zeta(s_1,s_2,s_3,s_4)$ to escape $\mathcal{A}$ non-traversally through the faces 
$$\mathcal{A}\cap \{Y_1=1\},\quad \mathcal{A}\cap \{X_1-X_2=0\},\quad \mathcal{A}\cap \{Y_2-Y_3=0\},\quad \mathcal{A}\cap \left\{2(Y_2-Y_3)+X_3-X_2=0\right\}.$$
As $R_i\geq 0$ in $\mathcal{A}$, it is also clear that $\zeta(s_1,s_2,s_3,s_4)$ does not escape $\mathcal{A}$ non-traversally through each $\mathcal{A}\cap \{X_i=0\}$. Hence, each $\zeta(s_1,s_2,s_3,s_4)$ stays in the compact set $\mathcal{A}$ and it is defined on $\mathbb{R}$ by the escape lemma. By Proposition \ref{prop_etainfty is tinfty}, metrics represented by these integral curves are complete. 

With the similar argument, each $\gamma(s_1,s_2,s_3,s_4)$ with $s_1,s_2,s_3,s_4\geq 0$ is defined on $\mathbb{R}$. If $s_1=0$, the integral curve stays in $\mathcal{RS}_{\text{FS}}\cap\mathcal{A}$. If $s_2=0$, the integral curve stays in $\mathcal{RS}_{\text{round}}\cap\mathcal{A}$. The metrics represented by these integral curves are complete.
\end{proof}

\begin{remark}
\label{remark_KR solitons}
By straightforward computations, the integral curve $\gamma(0,s_2,s_3,s_4)$ is in $\mathcal{RS}_{\text{KE}}$ if $s_4=(n+3)s_2+\epsilon s_3$. Note that the integral curve $\gamma(0,\sigma,0,(n+3)\sigma), \sigma=\frac{1}{\sqrt{n^2+6n+10}}$ represents the flat Euclidean metric on $\mathbb{C}^{2m+2}$ with a non-trivial potential function. We thus recover the 1-parameter of K\"ahler--Ricci solitons in \cite{cao1997limits}. It is noteworthy that the 1-parameter family $\gamma(0,s_2,s_3,(n+3)s_2+\epsilon s_3)$ with $s_2\geq 0$ only recovers those in \cite{cao1997limits} with non-negative sectional curvature, where the principal orbit is squashed in a way such that $a\leq b$.
\end{remark}

\begin{remark}
\label{rem_positive Ricci}
It is noteworthy that as $R_i\geq 0$ in $\mathcal{A}$, the Ricci curvatures of $\mathsf{G}/\mathsf{K}$ in each cohomogeneity one soliton in Lemma \ref{lem_long existing_zeta_gamma} are positive. The scalar curvature $r_s$ of the principal orbit is positive.
\end{remark}

\section{Asymptotics}
We investigate asymptotics for non-Einstein Ricci solitons in Lemma \ref{lem_long existing_zeta_gamma}. We first establish some general results. Since each integral curve in Lemma \ref{lem_long existing_zeta_gamma} enters $\mathcal{A}$, the defining inequalities \eqref{eqn_setA} are used in the following.

\begin{proposition}
\label{prop_points with Y1 nonzero}
Integral curves in Lemma \ref{lem_long existing_zeta_gamma} with $s_4>0$ converge to $(0,0,0,\mu,0,0,0)$ for some $\mu\in [0,1]$. 
\end{proposition}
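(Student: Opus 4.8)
The plan is to study the $\omega$-limit set $\Omega$ of each such trajectory, which is nonempty, compact, connected and flow-invariant because the trajectory stays in the compact invariant set $\mathcal{A}$ of Lemma \ref{lem_invariant_setA}. The first and cheapest input is the monotonicity of $Y_1$: from \eqref{eqn_Polynomial_soliton_equation} we have $Y_1'=Y_1(X_1-X_2)$, and on $\mathcal{A}$ one has $X_1-X_2\le 0$ with $Y_1\in[0,1]$, so $Y_1$ is non-increasing and converges to some $\mu\in[0,1]$. Hence $\Omega\subseteq\{Y_1=\mu\}$, and everything reduces to showing that the remaining six functions $X_1,X_2,X_3,Y_2,Y_3,W$ tend to $0$, i.e. that $\Omega=\{(0,0,0,\mu,0,0,0)\}$.

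For the steady trajectories ($\epsilon=0$, $W\equiv 0$) this is immediate. By \eqref{eqn_characteriaztion_of_non_trivial_soliton}, $Q'=2QG$; since $Q<0$ initially (as $s_4>0$) and $G\ge 0$, the function $Q$ is strictly negative and non-increasing, so it converges to some $Q_\infty<0$. A LaSalle argument forces $\Omega\subseteq\{QG=0\}$, and $Q\equiv Q_\infty<0$ on $\Omega$ gives $G\equiv 0$, i.e. $X_1=X_2=X_3\equiv 0$ on $\Omega$. Invariance then upgrades this: since $X_i\equiv 0$ and $X_i'=R_i$ on $\{W=0\}$, we get $R_1=R_2=R_3\equiv 0$; the bound $R_2\ge(4-2Y_1^2)Y_2^2+4mY_3^2$ from \eqref{eqn_Ricci_curvature_R2_R3} with $Y_1\le1$ forces $Y_2=Y_3\equiv 0$, so $\Omega=\{(0,0,0,\mu,0,0,0)\}$.

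The expanding case ($\epsilon=1$, $W>0$) is where the real work lies, and the first task is to prove $W\to 0$. Here I would use the potential as the monotone quantity: by \eqref{eqn_new_cohomogneity_one_conservation_law} we may write $C+f=Q/W$, and a short computation gives $\frac{d}{d\eta}(C+f)=H-1<0$, so $C+f$ is negative and strictly decreasing. If $C+f\to-\infty$, then $W=Q/(C+f)\to 0$ since $Q$ is bounded on $\mathcal{A}$. If instead $C+f$ stays bounded below, then $1-H$ is integrable and uniformly continuous (its $\eta$-derivative is bounded on $\mathcal{A}$), so Barbalat's lemma yields $H\to 1$; then $\Omega\subseteq\{H=1\}$, and since $H'=Q$ on $\{H=1\}$ by \eqref{eqn_characteriaztion_of_non_trivial_soliton}, we get $Q\equiv 0$ on $\Omega$, whence $Q\to 0$ and again $W=Q/(C+f)\to 0$. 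Either way $\Omega\subseteq\{W=0\}$, and on $\Omega$ the flow reduces to the $W=0$ system.

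It then remains to identify $\Omega$. I would first show $\Omega$ meets $\{G=0\}$: if $G\ge g_0>0$ on all of $\Omega$, then near $\Omega$ one has $W'=2W(G-\tfrac12 W)>0$ while $W\to 0$, which is impossible; so some $q_0\in\Omega$ satisfies $X_1=X_2=X_3=0$. Because $q_0$ minimizes each $X_i\ge 0$ along its entire orbit in $\Omega$, we obtain $0=X_i'(q_0)=R_i(q_0)$, and $R_2(q_0)=0$ with $Y_1\le 1$ forces $Y_2(q_0)=Y_3(q_0)=0$, so $q_0=(0,0,0,\mu,0,0,0)$. Since $Q'=2QG\le 0$ along the flow on the (internally chain transitive) set $\Omega$, $Q$ is constant there, equal to $Q(q_0)=G+R_s-1=-1$; then $Q\equiv-1$ forces $G\equiv 0$ on $\Omega$, and repeating the invariance argument of the steady case gives $\Omega=\{(0,0,0,\mu,0,0,0)\}$. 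The hard part throughout is the expanding case: securing $W\to 0$ and, above all, excluding that $\Omega$ settles on the Einstein/Ricci-flat locus $\{Q=0,\,H=1\}$, which would produce the wrong limit. The monotone potential $C+f$, Barbalat's lemma, and the boundary-minimum argument pinning $Q\equiv-1$ are precisely the tools that close this gap.
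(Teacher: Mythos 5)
Your treatment of the monotonicity of $Y_1$ and of the steady case is essentially the paper's argument and is correct: there $Q'=2QG\le 0$ holds along the \emph{original} trajectory, so $Q$ converges to a negative limit, LaSalle applies, and invariance of the $\omega$-limit set upgrades $G\equiv 0$ to $R_i\equiv 0$ and hence $Y_2=Y_3\equiv 0$. Your route to $W\to 0$ in the expanding case is genuinely different from the paper's (which imports the estimate $-\dot f\ge \alpha t+\beta$ from Buzano et al.): the identity $\frac{d}{d\eta}(C+f)=H-1<0$ together with $C+f=Q/W$ and Barbalat's lemma is a nice self-contained alternative, and the dichotomy you set up does yield $W\to 0$ in either branch.

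The gap is in the last step of the expanding case, where you conclude $Q\equiv Q(q_0)=-1$ on $\Omega$ from ``$Q'=2QG\le 0$ on the internally chain transitive set $\Omega$.'' That principle is false: a continuous function that is non-increasing along the flow \emph{restricted to} a chain transitive set need not be constant on it (an annulus foliated by periodic orbits is internally chain transitive, yet the radius is a non-constant continuous function that is weakly non-increasing along the flow). The correct form of LaSalle requires monotonicity along the original trajectory, or in a neighborhood of $\Omega$, and $Q$ has neither here: for $\epsilon=1$ one has $Q'=2Q\left(G-\tfrac{1}{2}W\right)+(H-1)W$, whose sign is indeterminate when $W>0$ because $G-\tfrac{1}{2}W$ can be negative. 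Concretely, your argument does not exclude the possibility that $\Omega$ is a larger connected invariant set containing, besides $q_0$, orbits lying in (or limiting onto) the Einstein locus $\{Q=0\}$ --- precisely the scenario you yourself flag as the crux. Everything up to and including the existence of $q_0=(0,0,0,\mu,0,0,0)\in\Omega$ (via the boundary-minimum argument $X_i'(q_0)=R_i(q_0)=0$) is fine; it is the promotion of $Q(q_0)=-1$ to $Q\equiv -1$ on all of $\Omega$ that is unsupported. The paper closes this by exhibiting a quantity that \emph{is} monotone along the original trajectory, namely $W/Y_2^2$ with $\left(W/Y_2^2\right)'=2\left(W/Y_2^2\right)X_2\ge 0$, which together with $W\to 0$ forces $Y_2,Y_3\to 0$ directly and reduces the limit-set analysis to the system $X_i'=X_i(G-1)$; some such monotone quantity (or a center-manifold/attractor argument at $q_0$) is needed to complete your proof.
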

\begin{proof}
For $\zeta(s_1,s_2,s_3,s_4)$ and $\gamma(s_1,s_2,s_3,s_4)$ in Lemma \ref{lem_long existing_zeta_gamma}, the function $Y_1$ is a constant if $s_2=0$, monotonically decreasing to some $\mu\geq 0$ if $s_2>0$. 

Consider $\epsilon=0$. By \eqref{eqn_characteriaztion_of_non_trivial_soliton}, the function $Q$ monotonically decreases to some negative number. Hence, the $\omega$-limit set of the integral curve is contained in $\mathcal{RS}_{\text{steady}}\cap \{X_i=0\}$. As the $\omega$-limit set is invariant, we further conclude that the $\omega$-limit set must be contained in $\mathcal{RS}_{\text{steady}}\cap \{X_i,Y_2,Y_3=0\}$. 

For $\epsilon=1$, there exists a large enough $t_*$ such that the inequality $-\dot{f}\geq \alpha t+\beta$ holds for all $t>t_*$, where $\alpha$ and $\beta$ are positive constants; see \cite[Proposition 1.13 and Proposition 1.18]{buzano_non-kahler_2015}. Each principal curvature is non-negative by $X_i\geq 0$. By the definition of $W$, we have
$$
\lim\limits_{\eta\to\infty} W=\lim\limits_{t\to\infty} \frac{1}{(\trace(L)-\dot{f})^2}\leq \lim\limits_{t\to\infty} \frac{1}{(\alpha t+\beta)^2}=0.
$$ 
Hence, the function $W$ converges to zero. By $\left(\frac{W^2}{Y_2^2}\right)'=2\frac{W^2}{Y_2^2}X_2\geq 0$, the function $\frac{W^2}{Y_2^2}$ is non-decreasing along the integral curves. Thus, it is necessary that $\lim\limits_{\eta\to\infty}Y_2=0$. Since $Y_2\geq Y_3$, the function $Y_3$ also converges to zero. The $\omega$-limit set is contained in $\mathcal{RS}\cap \{W,Y_2,Y_3=0\}$. As the $\omega$-limit set is invariant, and we have $X_i'=X_i(G-1)$, the integral curve converges to $(0,0,0,\mu,0,0,0)$ or $(1,0,0,\mu,0,0,0)$. By $X_1-X_2\leq 0$, the integral curve converges to $(0,0,0,\mu,0,0,0)$.
\end{proof}

We obtain the following corollary from Proposition \ref{prop_points with Y1 nonzero}.
\begin{corollary}
\label{cor_Q to -1}
The function $Q$ converges to $-1$ along each integral curve with $s_4>0$ in Lemma \ref{lem_long existing_zeta_gamma}.
\end{corollary}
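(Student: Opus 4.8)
The plan is to read this off directly from Proposition \ref{prop_points with Y1 nonzero} together with the continuity of $Q$. Recall that $Q = G + R_s + (n-1)\frac{\epsilon}{2}W - 1$ is a polynomial in the phase-space coordinates $(X_1,X_2,X_3,Y_1,Y_2,Y_3,W)$, hence a continuous function on the whole phase space. Since Proposition \ref{prop_points with Y1 nonzero} guarantees that every integral curve with $s_4>0$ in Lemma \ref{lem_long existing_zeta_gamma} converges as $\eta\to\infty$ to a point of the form $(0,0,0,\mu,0,0,0)$ with $\mu\in[0,1]$, it suffices to evaluate $Q$ at such a limit point and invoke continuity.

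The computation at the limit point is immediate. First, $G = X_1^2 + 2X_2^2 + 4mX_3^2 = 0$ since $X_1=X_2=X_3=0$ there. Second, inspecting the definitions in \eqref{eqn_curvature function}, every monomial appearing in $R_1$, $R_2$, and $R_3$ carries a factor of $Y_2^2$, $Y_3^2$, or $Y_2Y_3$; since $Y_2=Y_3=0$ at the limit, we get $R_1=R_2=R_3=0$ and therefore $R_s = R_1 + 2R_2 + 4mR_3 = 0$. Third, $W=0$ at the limit point. Consequently
\begin{equation*}
Q(0,0,0,\mu,0,0,0) = 0 + 0 + (n-1)\tfrac{\epsilon}{2}\cdot 0 - 1 = -1,
\end{equation*}
independently of the value of $\mu$ and of whether $\epsilon=0$ or $\epsilon=1$.

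Combining the two observations, continuity of the polynomial $Q$ gives
\begin{equation*}
\lim_{\eta\to\infty} Q = Q\big(\lim_{\eta\to\infty}(X_1,X_2,X_3,Y_1,Y_2,Y_3,W)\big) = Q(0,0,0,\mu,0,0,0) = -1,
\end{equation*}
which is the assertion. There is no real obstacle here: the entire content is that all the curvature terms $R_i$ and the kinetic term $G$ vanish at the limit, so $Q$ collapses to its constant part $-1$. The only input of substance is Proposition \ref{prop_points with Y1 nonzero}, which has already been established; this corollary is simply the reading-off of the limiting value of the conserved quantity and will be used subsequently to control the asymptotic geometry (in particular to recover the non-vanishing limit of $C+\epsilon f$ via \eqref{eqn_new_cohomogneity_one_conservation_law}).
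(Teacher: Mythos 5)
Your proof is correct and is essentially the paper's own argument: the corollary is stated as an immediate consequence of Proposition \ref{prop_points with Y1 nonzero}, and the intended justification is precisely that $G$, $R_s$, and $W$ all vanish at the limit point $(0,0,0,\mu,0,0,0)$, so the continuous function $Q$ tends to $-1$. Nothing is missing.
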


Proposition \ref{prop_points with Y1 nonzero} indicates that it is insufficient to know where $\zeta$ and $\gamma$ converge to understand the asymptotic behavior of a non-Einstein Ricci soliton. As most of the variables vanish at the point of convergence, it is natural to investigate \emph{how} the integral curves converge. With the existence of $\mu$ by Proposition \ref{prop_points with Y1 nonzero}, the next key step is to prove the existence of 
$$\nu=\lim\limits_{t\to\infty} \frac{b}{c}=\lim\limits_{\eta\to\infty} \sqrt{\frac{Y_3}{Y_2}}.$$ 
Before proceeding further, we introduce the following proposition, a slight generalization to \cite[Lemma 3.8]{dancer_new_2009}.

\begin{proposition}
\label{prop_model_ode}
Suppose a positive function $P$ satisfies the differential equation 
\begin{equation}
P'=XP+U,
\end{equation}
where $X$ and $U$ are functions on $\mathbb{R}$. If the limits $\lim\limits_{\eta\to \infty} X=-1$ and $\lim\limits_{\eta\to \infty}U= u\geq 0$ exist, the limit $\lim\limits_{\eta\to\infty} P=u$ also exists.
\end{proposition}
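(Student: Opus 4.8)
The plan is to read $P' = XP + U$ as a scalar linear first-order ODE whose coefficient $X$ tends to the stable value $-1$ and whose forcing $U$ tends to $u$. The associated equilibrium value, obtained by solving $0 = -P + u$, is exactly $u$, so the whole point is to show that the genuine solution is slaved to this equilibrium as $\eta \to \infty$. This is the continuous analogue of the statement that a stable linear recursion with an asymptotically constant inhomogeneity converges to its fixed point.

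First I would introduce the integrating factor
\[
\mu(\eta) = \exp\left(-\int_{\eta_0}^{\eta} X\, d\tilde\eta\right),
\]
so that $\mu' = -X\mu$ and $(\mu P)' = \mu'P + \mu P' = -X\mu P + \mu(XP+U) = \mu U$. Integrating gives the representation
\[
P(\eta) = \frac{\mu(\eta_0)P(\eta_0) + \int_{\eta_0}^{\eta} \mu U\, d\tilde\eta}{\mu(\eta)}.
\]
Because $X \to -1$, for any $\varepsilon \in (0,1)$ there is a $T$ with $-X > 1-\varepsilon$ for $\eta > T$; integrating the exponent shows $\mu(\eta)$ grows at least like $e^{(1-\varepsilon)\eta}$, so $\mu \to \infty$ and the constant term $\mu(\eta_0)P(\eta_0)$ contributes nothing to the limit of $P$.

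Then I would apply the $\,\cdot/\infty\,$ form of L'Hôpital's rule to this quotient. Since the denominator $\mu \to \infty$ and $g' = \mu' = -X\mu$ is nonzero for large $\eta$ (as $-X \to 1$), the limit is controlled by the ratio of derivatives,
\[
\frac{\frac{d}{d\eta}\left(\mu(\eta_0)P(\eta_0)+\int_{\eta_0}^{\eta}\mu U\right)}{\frac{d}{d\eta}\mu} = \frac{\mu U}{-X\mu} = \frac{U}{-X} \longrightarrow \frac{u}{1} = u,
\]
whence $\lim_{\eta\to\infty} P = u$. It is worth noting that this version of the rule requires only $\mu \to \infty$, so there is no need to treat separately the case $u = 0$, in which $\int_{\eta_0}^\eta \mu U$ could stay bounded.

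The one delicate point is the legitimacy of the limit interchange underlying L'Hôpital, and the main obstacle is simply to present it rigorously. To keep the argument self-contained I would alternatively give a direct comparison: setting $Z = P - u$ one finds $Z' = XZ + (Xu + U)$ with forcing $Xu + U \to 0$. Given $\delta > 0$, choose $T$ so that $X < -\tfrac12$ and $|Xu + U| < \tfrac{\delta}{8}$ for $\eta > T$. Then whenever $Z \geq \tfrac{\delta}{2}$ one has $XZ < -\tfrac{\delta}{4}$ and hence $Z' < -\tfrac{\delta}{8} < 0$, while whenever $Z \leq -\tfrac{\delta}{2}$ one has $XZ > \tfrac{\delta}{4}$ and hence $Z' > \tfrac{\delta}{8} > 0$. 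Thus the interval $[-\tfrac{\delta}{2}, \tfrac{\delta}{2}]$ is forward-invariant and is reached in finite time from either side, so $|Z| < \delta$ eventually; since $\delta$ is arbitrary, $Z \to 0$. Either route establishes the claim, and one sees that the positivity of $P$ is not in fact used in the convergence, only the asymptotics of $X$ and $U$.
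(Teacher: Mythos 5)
Your proof is correct, and in fact you give two valid arguments. Your second (direct comparison) route is essentially the paper's own proof: the paper, following Lemma~3.8 of Dancer--Wang, runs a trapping-region argument showing that $P$ eventually enters and cannot leave a small interval around $u$, with a separate case for $u=0$ (interval $\left(0,\tfrac{2\delta}{1-\delta}\right)$) and for $u>0$ (interval $\left(\tfrac{u}{1+\delta},\tfrac{u}{1-\delta}\right)$). Your substitution $Z=P-u$ with the additive band $\left[-\tfrac{\delta}{2},\tfrac{\delta}{2}\right]$ accomplishes the same thing but unifies the two cases, which is a modest simplification; your observation that positivity of $P$ is not actually needed is also accurate (the paper only invokes it in its $u=0$ case, and your version dispenses with that). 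Your first route, via the integrating factor $\mu=\exp\left(-\int X\right)$ and the $\cdot/\infty$ form of L'H\^opital, is genuinely different from the paper and is also sound: since $-X\to 1$ forces $\mu\to\infty$ at an exponential rate, the version of L'H\^opital requiring only that the denominator tend to infinity (not the numerator) applies, and the ratio of derivatives $U/(-X)\to u$ settles the matter in one stroke, again with no case split on whether $u=0$. The L'H\^opital route is shorter and arguably cleaner; the trapping route has the advantage of being the template the paper reuses later (e.g.\ in the invariant-set computations), so it generalizes more directly to the vector-valued situations appearing elsewhere in the argument.
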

\begin{proof}
If $P$ converges to some $p$, its derivative converges to $-p+u$. Hence, it is necessary that $p=u$. It suffices to prove the existence of $\lim\limits_{\eta\to\infty} P$.

If $u=0$, then for any $\delta \in (0,1)$,  there exists an $\eta_*$ large enough so that $\eta>\eta_*$ implies
\begin{equation}
|U|<\delta ,\quad X<-(1-\delta ).
\end{equation}
Since $P$ is a positive function, we also have 
\begin{equation}
\begin{split}
P'&=XP+U\leq -(1-\delta )P+\delta 
\end{split}.
\end{equation}
If $P(\eta_0)\geq \frac{2\delta }{1-\delta }$ for some $\eta_0>\eta_*$, then 
\begin{equation}
\label{eqn_negative P'}
P'(\eta_0)\leq -(1-\delta )\frac{2\delta }{1-\delta }+\delta =-\delta <0.
\end{equation}
If $P\geq \frac{2\delta }{1-\delta }$ for all $\eta>\eta_*$, the computation above implies that $P$ monotonically decreases to some number that is at least $\frac{2\delta}{1-\delta }$. As discussed above, if $\lim\limits_{\eta\to\infty} P$ exists, the limit has to be $u=0$. We thus reach a contradiction. Therefore, the function $P$ enters the interval $\left(0,\frac{2\delta }{1-\delta}\right)$ for some $\eta_{**}>\eta_0>\eta_*$. Inequality \eqref{eqn_negative P'} implies that $P$ stays in interval $\left(0,\frac{2\delta }{1-\delta}\right)$ for any $\eta>\eta_{**}$. By the arbitrariness of $\delta\in (0,1)$, we conclude that $\lim\limits_{\eta\to \infty} P=0$.

Assume $u\neq 0$ in the following.
For any $\delta \in (0,1)$, there exists an $\eta_*$ large enough so that $\eta>\eta_*$ implies
\begin{equation}
|U-u|<\delta ^2,\quad X<-(1-\delta ^2).
\end{equation}
For $\eta>\eta_*$, we also have 
\begin{equation}
\begin{split}
P'&=XP+U\leq -(1-\delta ^2)P+U
\end{split}.
\end{equation}
Consider $\delta $ small enough so that $-\delta  u+\delta ^2<0$. If $P(\eta_0)\geq \frac{u}{1-\delta }$ for some $\eta_0>\eta_*$, then 
$$P'(\eta_0)\leq -(1-\delta ^2)\frac{u}{1-\delta }+U=-\delta  u+(U-u)<-\delta  u+\delta ^2<0;$$
If $P(\eta_0)\leq \frac{u}{1+\delta }$ for some $\eta_0>\eta_*$, then 
$$P'(\eta_0)\geq -(1-\delta ^2)\frac{u}{1+\delta }+U=\delta  u+(U-u)>\delta  u-\delta ^2>0.$$ 
If $\frac{u}{1+\delta }<P(\eta_0)<\frac{u}{1-\delta}$ for some $\eta_0>\eta_*$, the function $P$ stays in the set for all $\eta>\eta_0$. Then, the rest of the argument is the same as the one in \cite[Lemma 3.8]{dancer_new_2009}. For any $\delta \in (0,1)$ such that $-\delta  u+\delta ^2<0$, we must have $\frac{u}{1+\delta }<P<\frac{u}{1-\delta }
$ for all large enough $\eta$. By the arbitrariness of $\delta$, the proof is complete.
\end{proof}

\subsection{Steady Ricci-solitons}
For the steady case, proving the existence of $\nu$ depends on the value of $\mu$. If $\mu \neq 0$, we can apply \cite[Proposition 2.22]{wink_cohomogeneity_2017}. This leads us to the following proposition.
\begin{proposition}
\label{prop_b/c limit exists}
Suppose $\lim\limits_{t\to \infty}\frac{a}{b}=\mu>0$ along $\zeta(s_1,s_2,0,s_4)$ or $\gamma(s_1,s_2,0,s_4)$ in Lemma \ref{lem_long existing_zeta_gamma}. Then $\lim\limits_{t\to \infty}\frac{b}{c}=\nu$ for some $\nu> 0$.
\end{proposition}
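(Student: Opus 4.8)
The plan is to work in the coordinates of \eqref{eqn_Polynomial_soliton_equation} with $\epsilon=0$ (these curves lie in $\mathcal{RS}_{\text{steady}}$, so $W\equiv 0$) and to reduce the whole statement to a single analytic fact about the ratio $r:=Y_3/Y_2=b^2/c^2$, so that $\nu=\sqrt{r}$. By Proposition \ref{prop_points with Y1 nonzero} the curve converges to $(0,0,0,\mu,0,0,0)$, so $X_i\to 0$, $Y_2,Y_3\to 0$ and $Y_1\to\mu>0$; since the curve stays in $\mathcal{A}$ we have $0\le Y_3\le Y_2$, whence $r\in[0,1]$ is automatically bounded. Differentiating and using the $Y_2,Y_3$ rows of $V$ gives $r'=2r(X_2-X_3)$, so that $\log r(\eta)=\log r(0)+2\int_0^{\eta}(X_2-X_3)\,d\tilde\eta$. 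Thus the proposition is equivalent to showing that $\int_0^{\infty}(X_2-X_3)\,d\eta$ converges to a finite number: boundedness $r\le 1$ already rules out drift to $+\infty$, and convergence of this integral is exactly what rules out $r\to 0$, i.e.\ it forces $\nu>0$.

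The hypothesis $\mu>0$ first enters through the companion ratio $Y_1$. From $Y_1'=Y_1(X_1-X_2)$ we get $\log Y_1(\eta)=\log Y_1(0)+\int_0^{\eta}(X_1-X_2)\,d\tilde\eta$, and since $Y_1\to\mu>0$ the integral $\int_0^{\infty}(X_1-X_2)\,d\eta$ converges. In geometric terms, when $\mu>0$ the $\mathbf 1$ and $\mathbf 2$ summands are asymptotically locked ($a\sim\mu b$), the fiber $\mathbb S^3$ stays non-degenerate at infinity, and the three-summand system becomes asymptotic to the two-summand (squashed-$\mathbb S^3$-bundle-over-$\mathbb{HP}^m$) system. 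This is precisely the regime treated in \cite[Proposition 2.22]{wink_cohomogeneity_2017}. I would therefore verify that the present curve satisfies the hypotheses of that proposition---the non-collapsing condition $\mu>0$, the positivity of the $X_i$ and $R_i$, and the bound $Y_2\ge Y_3$ inherited from $\mathcal{A}$---and then invoke it to obtain $\lim_{t\to\infty}b/c=\nu$ with $\nu>0$.

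To see why this is believable and to isolate the mechanism, I would pin down the quasi-equilibrium structure of $X_2-X_3$ via Proposition \ref{prop_model_ode}. Its evolution is $(X_2-X_3)'=(G-1)(X_2-X_3)+(R_2-R_3)$, of the model form $P'=XP+U$ with $X=G-1\to -1$, and the forcing is the quadratic form $R_2-R_3=(4-2Y_1^2)Y_2^2-(4m+8)Y_2Y_3+(4m+4+2Y_1^2)Y_3^2$, which in terms of $r$ reads $Y_2^2\,\big[(4-2Y_1^2)-(4m+8)r+(4m+4+2Y_1^2)r^2\big]$. The bracket vanishes exactly at the root $r_\ast$ of $(4-2\mu^2)-(4m+8)r+(4m+4+2\mu^2)r^2=0$, a positive number when $\mu>0$. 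Because the relaxation of $X_2-X_3$ is exponentially fast ($X\to -1$) while $Y_2,Y_3$ decay only like $\eta^{-1/2}$, the difference tracks its quasi-equilibrium $X_2-X_3\approx R_2-R_3$, whose sign is that of $r_\ast-r$; this drives $r$ toward $r_\ast>0$ on a phase line, which is the conclusion extracted from Wink's proposition.

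The hard part is exactly this separation-of-scales step. Each $X_i\sim\tfrac{1}{2\eta}$ for an AP end, so $X_2-X_3$ individually only decays like $\eta^{-1}$ and the crudest bound gives $\int(X_2-X_3)\sim\log\eta$, which would permit $r$ to drift logarithmically to $0$ or $1$. One must show the leading $\eta^{-1}$ terms cancel, equivalently that $r$ truly settles at $r_\ast$. Linearizing the phase line yields $r'\approx -k\,(r-r_\ast)\,\eta^{-1}$ with $k>0$ the (normalized) slope of the bracket at $r_\ast$ times the limit of $\eta Y_2^2$, giving only \emph{polynomial} attraction $|r-r_\ast|\lesssim\eta^{-k}$ and hence $X_2-X_3\sim\eta^{-1-k}$, which is integrable precisely because $k>0$. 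The two genuinely delicate points are therefore establishing that the curvature scale $\eta Y_2^2$ converges (itself a marginal limit handled by Proposition \ref{prop_model_ode}) and that the equilibrium is attracting with $k>0$; both rely on $\mu>0$ keeping the quadratic forcing non-degenerate with a strictly positive root, and this is exactly why the degenerate case $\mu=0$ must be treated separately in the next subsection.
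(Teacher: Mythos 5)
Your reduction to the convergence of $\int_0^\infty(X_2-X_3)\,d\eta$ and your identification of the quasi-equilibrium $r_\ast$ (the smaller root of $(4-2\mu^2)-(4m+8)r+(4m+4+2\mu^2)r^2$, which indeed gives $\nu^2=\tfrac{1}{2m+3}$ when $\mu=1$) correctly locate the mechanism, but the argument as written has a genuine gap precisely at the step you yourself flag as ``the hard part.'' First, the input you need --- that the curvature scale $\eta Y_2^2$ (equivalently $X_2/Y_2^2$) converges --- cannot be obtained from Proposition \ref{prop_model_ode} at this stage: the forcing term in $(X_2/Y_2^2)'$ is $4-2Y_1^2+4m\,Y_3^2/Y_2^2$, whose convergence requires $Y_3/Y_2\to\nu$, i.e.\ exactly the conclusion you are trying to prove. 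Second, the assertion that $X_2-X_3$ ``tracks'' $R_2-R_3$ closely enough that $r$ is driven to $r_\ast$ at an integrable rate is the entire content of the proposition; since each $X_i$ decays only like $\eta^{-1}$, the crude estimate gives $\int(X_2-X_3)\sim\log\eta$, and ruling out a logarithmic drift of $r$ requires a quantitative averaging/singular-perturbation estimate that you describe but do not carry out. Third, invoking \cite[Proposition 2.22]{wink_cohomogeneity_2017} verbatim is not available: that result concerns the two-summand system, whereas here the $\mathbf 1$ and $\mathbf 2$ summands are only asymptotically locked ($a/b\to\mu$, not $a\equiv\mu b$), so its hypotheses are not literally satisfied and the argument must be re-run in the three-summand setting.

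The paper takes a different and cleaner route that avoids all rate estimates: it introduces $v=ab^2c^{4m}$ and the B\"ohm functional $B=v^{2/n}\bigl(r_s+\trace(L_0^2)\bigr)$, shows via \eqref{eqn_derivative of bohm functional} that $B$ is eventually non-increasing (using $\dot f\to-\sqrt{-C}$ and the vanishing of the principal curvatures) and bounded below by $0$, hence convergent; boundedness of $B$ forces $b/c$ to stay away from $0$, a separate ODE argument shows $v^{2/n}\trace(L_0^2)\to 0$, and therefore $v^{2/n}r_s$ converges. Since $v^{2/n}r_s$ is an explicit generalized polynomial in $a/b$ and $b/c$ with $a/b\to\mu>0$, the connectedness of the $\omega$-limit set then forces $b/c$ to converge. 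If you want to salvage your dynamical approach, you would need to supply the averaging estimate for $X_2-X_3$ and find a non-circular way to control $\eta Y_2^2$; as it stands, the Lyapunov-function argument is the one that actually closes the proof.
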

\begin{proof}
Let $v:=ab^2c^{4m}$ and consider the B\"ohm's functional
\begin{equation}
\label{eqn_bohm_functional}
B:=v^\frac{2}{n}\left(r_s+\trace(L_0^2)\right),
\end{equation}
where $L_0$ is the traceless part of $L$. From \cite[Proposition 2.17]{dancer_cohomogeneity_2013}, we have
\begin{equation}
\label{eqn_derivative of bohm functional}
\dot{B}=-2v^{\frac{2}{n}}\trace\left(L_0^2\right) \left(\frac{n-1}{n}\trace{(L)}-\dot{f}\right).
\end{equation}
By Proposition \ref{prop_points with Y1 nonzero}, each $\frac{X_i}{\sqrt{-Q}}$ converges to zero as $\eta\to \infty$. Hence, each principal curvature converges to zero as $t\to \infty$. By Corollary \ref{cor_Q to -1}, we have $\lim\limits_{t\to\infty} \dot{f}=-\sqrt{-C}$. Therefore, the function $B$ is eventually non-increasing. By the Cauchy--Schwarz inequality and Remark \ref{rem_positive Ricci}, we have $B\geq v^\frac{2}{n}r_s\geq 0$. Thus, the function converges. Since
\begin{equation}
\begin{split}
v^\frac{2}{n}r_s &= 8\left(\frac{a}{b}\right)^\frac{2}{n}\left(\frac{c}{b}\right)^\frac{8m}{n}+4m(4m+8)\left(\frac{a}{b}\right)^\frac{2}{n}\left(\frac{b}{c}\right)^\frac{6}{n}\\
&\quad -2\left(\frac{a}{b}\right)^{2+\frac{2}{n}}\left(\frac{c}{b}\right)^{\frac{8m}{n}}-4m\left(\frac{a}{b}\right)^{2+\frac{2}{n}}\left(\frac{b}{c}\right)^{2+\frac{6}{n}}-8m\left(\frac{a}{b}\right)^\frac{2}{n}\left(\frac{b}{c}\right)^{2+\frac{6}{n}},
\end{split}
\end{equation}
the ratio $\frac{b}{c}$ is eventually bounded away from zero.

Define $$\tilde{a}=v^\frac{2}{n}\frac{\dot{a}}{a},\quad \tilde{b}=v^\frac{2}{n}\frac{\dot{b}}{b},\quad \tilde{c}=v^\frac{2}{n}\frac{\dot{c}}{c}.$$
Straightforward computations show that 
\begin{equation}
\label{eqn_derivativeoftilde_abc}
\begin{split}
\dot{\tilde{a}}&=-\left(\frac{n-2}{n}\trace(L)-\dot{f}\right)\tilde{a}+2\left(\frac{a}{b}\right)^{2+\frac{2}{n}}\left(\frac{c}{b}\right)^{\frac{8m}{n}}+4m\left(\frac{a}{b}\right)^{2+\frac{2}{n}}\left(\frac{b}{c}\right)^{2+\frac{6}{n}},\\
\dot{\tilde{b}}&=-\left(\frac{n-2}{n}\trace(L)-\dot{f}\right)\tilde{b}+4\left(\frac{a}{b}\right)^\frac{2}{n}\left(\frac{c}{b}\right)^\frac{8m}{n}-2\left(\frac{a}{b}\right)^{2+\frac{2}{n}}\left(\frac{c}{b}\right)^{\frac{8m}{n}}+4m\left(\frac{a}{b}\right)^\frac{2}{n}\left(\frac{b}{c}\right)^{2+\frac{6}{n}},\\
\dot{\tilde{c}}&=-\left(\frac{n-2}{n}\trace(L)-\dot{f}\right)\tilde{c}+(4m+8)\left(\frac{a}{b}\right)^\frac{2}{n}\left(\frac{b}{c}\right)^\frac{6}{n}-2\left(\frac{a}{b}\right)^{2+\frac{2}{n}}\left(\frac{b}{c}\right)^{2+\frac{6}{n}}-4\left(\frac{a}{b}\right)^\frac{2}{n}\left(\frac{b}{c}\right)^{2+\frac{6}{n}}.
\end{split}
\end{equation}
Zeroth order terms in \eqref{eqn_derivativeoftilde_abc} are bounded. As  $\lim\limits_{t\to\infty}\left(\frac{n-2}{n}\trace(L)-\dot{f}\right)=\sqrt{-C}>0$, functions $\tilde{a}$, $\tilde{b}$ and $\tilde{c}$ must be bounded. Therefore, the function $v^{\frac{2}{n}}\trace(L_0^2)$ converges to zero. We have
\begin{equation}
\label{eqn_limit of B}
\begin{split}
\lim\limits_{t\to\infty} B=\lim\limits_{t\to\infty} v^\frac{2}{n} r_s&=\lim\limits_{t\to\infty}\left(8\mu^\frac{2}{n}\left(\frac{c}{b}\right)^\frac{8m}{n}+4m(4m+8)\mu^\frac{2}{n}\left(\frac{b}{c}\right)^\frac{6}{n}\right.\\
&\quad \left.-2\mu^{2+\frac{2}{n}}\left(\frac{c}{b}\right)^{\frac{8m}{n}}-4m\mu^{2+\frac{2}{n}}\left(\frac{b}{c}\right)^{2+\frac{6}{n}}-8m\mu^\frac{2}{n}\left(\frac{b}{c}\right)^{2+\frac{6}{n}}\right).
\end{split}
\end{equation}
As $\lim\limits_{t\to\infty} B$ exists and $\mu\neq 0$, the limit $\lim\limits_{t\to\infty} \frac{b}{c}$ also exists.
\end{proof}

If $\mu=0$, we have $\lim\limits_{\eta\to\infty}B=0$ by \eqref{eqn_limit of B} and the existence of $\nu$ is no longer guaranteed. In this case, we employ an alternative method to establish the existence of $\nu$. Define 
$$
\tilde{R}_2=4Y_2^2+4m Y_3^2,\quad \tilde{R}_3=(4m+8)Y_2Y_3-4Y_3^2.
$$
We have the following proposition.
\begin{proposition}
\label{prop_fixed ratio}
If $\mu=0$, we have the limit
$\lim\limits_{\eta\to\infty}\frac{X_2}{X_3}\frac{\tilde{R}_3}{\tilde{R}_2}=1$.
\end{proposition}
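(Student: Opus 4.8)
The plan is to rewrite the target quantity as
\[
\frac{X_2}{X_3}\frac{\tilde R_3}{\tilde R_2}=\frac{X_2/\tilde R_2}{X_3/\tilde R_3}
\]
and to prove separately that each factor $P_2:=X_2/\tilde R_2$ and $P_3:=X_3/\tilde R_3$ converges to $1$; the desired limit is then $P_2/P_3\to 1$. Since $\mu=0$, Proposition~\ref{prop_points with Y1 nonzero} gives $Y_1\to 0$ together with $X_i,Y_2,Y_3\to 0$, and Corollary~\ref{cor_Q to -1} gives $Q\to -1$, hence $G\to 0$. Because $Y_3\leq Y_2$ on $\mathcal A$, both $\tilde R_2=4Y_2^2+4mY_3^2$ and $\tilde R_3=(4m+8)Y_2Y_3-4Y_3^2$ stay strictly positive along the curve (for $\tilde R_3$ note $(4m+8)Y_2-4Y_3\geq(4m+4)Y_3>0$), so $P_2,P_3$ are well defined; moreover $X_2,X_3>0$ for all finite $\eta$ since the faces $\{X_i=0\}$ are repelling by $R_i\geq 0$ (Lemma~\ref{lem_invariant_setA}), so $P_2,P_3$ are positive. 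This sets us up to apply the model ODE of Proposition~\ref{prop_model_ode}.

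First I would derive the evolution of $P_2$. Using $X_2'=X_2(G-1)+R_2$ and the quotient rule,
\[
P_2'=P_2\left(G-1-\frac{\tilde R_2'}{\tilde R_2}\right)+\frac{R_2}{\tilde R_2},
\]
and identically for $P_3$ with $\tilde R_3$ in place of $\tilde R_2$. This is exactly the form $P'=XP+U$ of Proposition~\ref{prop_model_ode}, so it remains to check that the coefficient tends to $-1$ and the inhomogeneity to a nonnegative limit.

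The three limits to establish are: (i) $G\to 0$, already noted; (ii) $R_i/\tilde R_i\to 1$; and (iii) $\tilde R_i'/\tilde R_i\to 0$. For (ii), the only discrepancy between $R_i$ and $\tilde R_i$ is the term carrying a factor $Y_1^2$, namely $R_2-\tilde R_2=-2Y_1^2Y_2^2$ and $R_3-\tilde R_3=-2Y_1^2Y_3^2$; dividing by $\tilde R_i$ and bounding the resulting ratios via $Y_3\leq Y_2$ shows the correction is $O(Y_1^2)\to 0$. For (iii), I would compute $\tilde R_i'$ from $Y_2'=Y_2(G-X_2)$ and $Y_3'=Y_3(G+X_2-2X_3)$; in each case one finds $\tilde R_i'=2G\,\tilde R_i+(\text{terms linear in }X_2,X_3)$, and after division by $\tilde R_i$ those terms are bounded multiples of $X_2,X_3$ by the same $Y_3\leq Y_2$ estimates, so $\tilde R_i'/\tilde R_i=2G+O(X_2)+O(X_3)\to 0$.

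With (i)--(iii) the coefficient $G-1-\tilde R_i'/\tilde R_i\to -1$ and the inhomogeneity $R_i/\tilde R_i\to 1\geq 0$, so Proposition~\ref{prop_model_ode} gives $P_2,P_3\to 1$ and hence $\frac{X_2}{X_3}\frac{\tilde R_3}{\tilde R_2}\to 1$. I expect the main obstacle to be step (iii): one must confirm that every ratio entering $\tilde R_i'/\tilde R_i$ --- in particular $Y_2Y_3/\tilde R_3$ and $Y_3^2/\tilde R_3$ --- remains bounded, which is precisely where the defining inequality $Y_2\geq Y_3$ of $\mathcal A$ is indispensable, since otherwise $\tilde R_3$ could degenerate faster than its own derivative and the model-ODE coefficient would fail to converge to $-1$.
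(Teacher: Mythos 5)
Your proposal is correct and follows essentially the same route as the paper: both compute the evolution of $X_i/\tilde{R}_i$, verify via $Y_3\leq Y_2$, $G\to 0$, and $Y_1\to\mu=0$ that the coefficient tends to $-1$ and the inhomogeneity to $1$, apply Proposition \ref{prop_model_ode} to get $X_i/\tilde{R}_i\to 1$, and take the ratio. Your unexpanded form $P_i'=P_i\bigl(G-1-\tilde{R}_i'/\tilde{R}_i\bigr)+R_i/\tilde{R}_i$ is exactly the paper's displayed equation after substituting the $Y_i'$ equations.
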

\begin{proof}
We have 
\begin{equation}
\begin{split}
&\left(\frac{X_2}{\tilde{R}_2}\right)'=\frac{X_2}{\tilde{R}_2}\left(-G-1+\frac{8Y_2^2}{\tilde{R}_2}X_2-\frac{8mY_3^2}{\tilde{R}_2}(X_2-2X_3)\right)+1-2\frac{Y_1^2Y_2^2}{\tilde{R}_2},\\
&\left(\frac{X_3}{\tilde{R}_3}\right)'=\frac{X_3}{\tilde{R}_3}\left(-G-1+\frac{2(4m+8)Y_2Y_3}{\tilde{R}_3}X_3+\frac{8Y_3^2}{\tilde{R}_3}(X_2-2X_3)\right)+1-2Y_1^2\frac{Y_3^2}{\tilde{R}_3}.
\end{split}
\end{equation}
Since $\frac{Y_3}{Y_2}\in[0,1]$, each one of $\frac{Y_2^2}{\tilde{R}_2}$, $\frac{Y_3^2}{\tilde{R}_3}$ and $\frac{Y_2Y_3}{\tilde{R}_3}$ is bounded. By Proposition \ref{prop_points with Y1 nonzero}, the coefficient for each $\frac{X_i}{\tilde{R}_i}$ converges to $-1$. By the assumption of $\mu=0$ and Proposition \ref{prop_model_ode}, each $\frac{X_i}{\tilde{R}_i}$ converge to $1$. The proof is complete by taking the ratio.
\end{proof}

If $X_2-X_3$ eventually has a sign, by $\left(\frac{Y_3}{Y_2}\right)'=2\frac{Y_3}{Y_2}(X_2-X_3)$, the function $\frac{Y_3}{Y_2}$ is eventually decreasing or eventually increasing. Hence, the limit $\nu$ exists by the compactness of $\mathcal{A}$. We focus on the case where $X_2-X_3$ changes sign infinitely many times in the following.

\begin{proposition}
\label{prop_infinite times analysis 1}
If $\mu=0$ and the function $X_2-X_3$ changes sign infinitely many times, the integral curve eventually stays in the set
$$\Psi:=\mathcal{RS}\cap \left\{X_2-X_3-Y_2+(m+1)Y_3\leq 0\right\}\cap \left\{Y_2-(m+1)Y_3\geq 0\right\}.$$
\end{proposition}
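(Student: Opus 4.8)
The plan is to establish that $\Psi$ is forward-invariant for all sufficiently large $\eta$ and that the orbit enters $\Psi$ at some large time; a standard barrier argument then forces the orbit to remain in $\Psi$ thereafter. Throughout I abbreviate $\Phi_1:=X_2-X_3-Y_2+(m+1)Y_3$ and $\Phi_2:=Y_2-(m+1)Y_3$, so that $\Psi=\mathcal{RS}\cap\{\Phi_1\leq 0\}\cap\{\Phi_2\geq 0\}$ and $X_2-X_3=\Phi_1+\Phi_2$. Since $\epsilon=0$ here and $(Y_3/Y_2)'=2(Y_3/Y_2)(X_2-X_3)$, the hypothesis that $X_2-X_3$ changes sign infinitely often says precisely that $r:=Y_3/Y_2$ has infinitely many local extrema. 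I will also use that Proposition \ref{prop_points with Y1 nonzero} gives $X_i,Y_2,Y_3\to 0$ and that, because $\mu=0$ rules out the boundary strata $\mathcal{RS}_{\text{round}}$ and $\mathcal{RS}_{\text{FS}}$, the orbit lies in the interior of $\mathcal{A}$, so $Y_2>Y_3>0$.

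For invariance I check the two faces. On $\{\Phi_2=0\}$ one computes $\Phi_2'=G\Phi_2-X_2(Y_2+(m+1)Y_3)+2(m+1)Y_3X_3$, which reduces to $2(m+1)Y_3(X_3-X_2)$; on this face $\Phi_1=X_2-X_3$, so inside $\Psi$ one has $X_2-X_3\leq 0$ and hence $\Phi_2'\geq 0$, i.e. the field points inward. The face $\{\Phi_1=0\}$ is the crux. The key algebraic observation is that $Y_2=Y_3$ and $Y_2=(m+1)Y_3$ are the two roots of $\tilde{R}_2-\tilde{R}_3$ as a quadratic in $Y_2$, which yields the factorization $\tilde{R}_2-\tilde{R}_3=4(Y_2-Y_3)\Phi_2$ and therefore $R_2-R_3=(Y_2-Y_3)\big(4\Phi_2-2Y_1^2(Y_2+Y_3)\big)$. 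Substituting $X_2-X_3=\Phi_2$ into $\Phi_1'=(X_2-X_3)'-\Phi_2'$ and using the evolution equations collapses everything to
\[
\Phi_1'=\Phi_2\big(X_3+5Y_2+(m-3)Y_3-1\big)-2Y_1^2\big(Y_2^2-Y_3^2\big).
\]
Inside $\Psi$ we have $\Phi_2\geq 0$; the bracket tends to $-1$ as $\eta\to\infty$ by the convergence above, and the last term is $\leq 0$ since $Y_2\geq Y_3$. Hence $\Phi_1'\leq 0$ on this face for all $\eta$ beyond some $\eta_0$, and $\Psi$ is forward-invariant for $\eta>\eta_0$.

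To show the orbit enters $\Psi$, I use the local minima of $r$. At a sign change of $X_2-X_3$ from negative to positive the function $r$ attains a local minimum at some $\eta_*$; there $X_2=X_3$, so $X_2-X_3=0$ and $(X_2-X_3)'(\eta_*)=R_2-R_3\geq 0$. By the factorization and $Y_2>Y_3$ this forces $\Phi_2(\eta_*)\geq Y_1^2(Y_2+Y_3)/2\geq 0$, while $\Phi_1(\eta_*)=-\Phi_2(\eta_*)\leq 0$; thus the orbit lies in $\Psi$ at $\eta_*$. Since such local minima occur at arbitrarily large $\eta$, I choose one with $\eta_*>\eta_0$ and invoke the invariance just established to conclude the orbit stays in $\Psi$ for all $\eta>\eta_*$. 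The main obstacle is the computation on $\{\Phi_1=0\}$: everything hinges on recognizing that $\Phi_2$ is exactly the cofactor of $Y_2-Y_3$ in $R_2-R_3$ (equivalently that $Y_2=(m+1)Y_3$ is a root of $\tilde{R}_2-\tilde{R}_3$), which is what makes $\Phi_1'$ proportional to $\Phi_2$ and hence sign-definite near infinity; without this factorization the quadratic curvature terms would obstruct the inward-pointing estimate.
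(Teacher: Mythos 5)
Your proposal is correct and follows essentially the same strategy as the paper: find a late time at which the orbit is inside $\Psi$, then show the field points inward on the two faces once all variables are small. Your face computation is the same as the paper's: on $\{\Phi_1=0\}$ one has $X_2-X_3=\Phi_2$, and your identity $\Phi_1'=\Phi_2\left(X_3+5Y_2+(m-3)Y_3-1\right)-2Y_1^2\left(Y_2^2-Y_3^2\right)$ agrees with \eqref{eqn_derivative at eta**} after noting that the paper's factor $K=1-X_2-4Y_2-(2m-2)Y_3$ equals $1-X_3-5Y_2-(m-3)Y_3$ on that face; likewise your $\Phi_2'$ computation on $\{\Phi_2=0\}$ is the paper's inequality $\left(Y_3/Y_2\right)'\leq 0$ in disguise. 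Two genuine differences are worth recording. First, your entry argument is more direct than the paper's: you evaluate at a local minimum of $Y_3/Y_2$ (a sign change of $X_2-X_3$ from negative to positive) and read off $\Phi_2>0$, $\Phi_1=-\Phi_2<0$ from the factorization $R_2-R_3=(Y_2-Y_3)\left(4\Phi_2-2Y_1^2(Y_2+Y_3)\right)$, whereas the paper argues by contradiction to produce a point with $X_2-X_3<0$ and $\Phi_2>0$; both are sound, but yours lands strictly inside both faces, which simplifies the subsequent barrier argument. Second, the paper explicitly rules out a non-transversal exit through $\{\Phi_2=0\}$ via the second derivative of $Y_3/Y_2$; you do not address the corner $\{\Phi_1=\Phi_2=0\}$, where your inequality $\Phi_2'\geq 0$ degenerates to an equality. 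This is not a fatal gap, because at the corner $\Phi_1'=-2Y_1^2\left(Y_2^2-Y_3^2\right)<0$ strictly (the orbit lies in the interior of $\mathcal{A}$, so $Y_1>0$ and $Y_2>Y_3$), and hence a first-exit-time argument shows the corner is never reached from the interior of $\Psi$ — but you should upgrade your stated ``$\Phi_1'\leq 0$'' to the strict inequality and say this explicitly. Finally, a small correction: it is the hypothesis that $X_2-X_3$ changes sign infinitely often, not $\mu=0$, that excludes $\mathcal{RS}_{\text{FS}}$ (on which $X_2\equiv X_3$ and $\mu=0$ can still hold, e.g.\ for $\gamma(0,s_2,0,s_4)$).
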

\begin{proof}
Since all $X_i$ and $Y_i$ converge to zero, we first choose an $\eta_*$ such that $K:=1-X_2-4Y_2-(2m-2)Y_3>0$ for $\eta>\eta_{*}$. 

We claim that there is an $\eta_{**}\geq \eta_*$ such that $(X_2-X_3)(\eta_{**})<0$ and $\left(Y_2-(m+1)Y_3\right)(\eta_{**})>0.$ Suppose such an $\eta_{**}$ does not exist, the inequality $Y_2-(m+1)Y_3\leq 0$ holds whenever $X_2-X_3<0$. By
\begin{equation}
\begin{split}
(X_2-X_3)'&=(X_2-X_3)(G-1)+(Y_2-Y_3)\left(4Y_2-(4m+4)Y_3-2Y_1^2(Y_2+Y_3)\right)\\
&\leq (X_2-X_3)(G-1),
\end{split}
\end{equation}
we have $X_2-X_3\leq 0$ for all $\eta\geq \eta_{*}$, a contradiction to our assumption.

With such an $\eta_{**}$, the integral curve enters the set $\Psi$.
Next, we show that the integral curve stays in the set. Suppose otherwise, the integral curve escapes the set through one of the two faces at some $\eta_{***}>\eta_{**}$. Suppose $(X_2-X_3-Y_2+(m+1)Y_3)(\eta_{***})=0$. At that point, we have 
\begin{equation}
\label{eqn_derivative at eta**}
\begin{split}
&\left(X_2-X_3-Y_2+(m+1)Y_3\right)'(\eta_{***})\\
&=(X_2-X_3-Y_2+(m+1)Y_3)(G-1+2(m+1)Y_3)-2Y_1^2(Y_2^2-Y_3^2)\\
&\quad -(Y_2-(m+1)Y_3)K\\
&= -2Y_1^2(Y_2^2-Y_3^2)-(Y_2-(m+1)Y_3)K\\
&\leq -(Y_2-(m+1)Y_3)K
\end{split}
\end{equation}
Since $K(\eta_{***})>0$, the derivative above is non-positive. Non-traversal crossing is impossible as it requires $Y_1^2(Y_2^2-Y_3^2)$ and $Y_2-(m+1)Y_3$ to vanish simultaneously. Thus, the integral curve does not escape through the face $\mathcal{RS}\cap \left\{X_2-X_3-Y_2+(m+1)Y_3= 0\right\}$.

Suppose $(Y_2-(m+1)Y_3)(\eta_{***})=0$. At that point, we have 
\begin{equation}
\begin{split}
\left(\frac{Y_3}{Y_2}\right)'&=2\left(\frac{Y_3}{Y_2}\right)(X_2-X_3)\leq 2\left(\frac{Y_3}{Y_2}\right)\left(Y_2-(m+1)Y_3\right)=0
\end{split}
\end{equation}
If the crossing is non-traversal, both $Y_2-(m+1)Y_3$ and $X_2-X_3$ vanish at the crossing point. We reach a contradiction that
\begin{equation}
\begin{split}
\left(\frac{Y_3}{Y_2}\right)''(\eta_{***})&=2\left(\frac{Y_3}{Y_2}\right)(Y_2-Y_3)\left(4Y_2-(4m+4)Y_3-2Y_1^2(Y_2+Y_3)\right)\\
&=2\left(\frac{Y_3}{Y_2}\right)(Y_2-Y_3)\left(-2Y_1^2(Y_2+Y_3)\right)\\
&<0.
\end{split}
\end{equation}
Hence, the integral curve does not escape the set through the face $\mathcal{RS}\cap \left\{Y_2-(m+1)Y_3=0\right\}$.
\end{proof}

We are ready to prove the existence of $\nu$.

\begin{proposition}
\label{prop_nu exists even mu=0}
If $\mu=0$ and $X_2-X_3$ changes sign infinitely many times, we have the limit $\nu^2=\frac{1}{m+1}$.
\end{proposition}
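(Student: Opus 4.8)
The plan is to track the single ratio $\rho:=\frac{Y_3}{Y_2}=\left(\frac{b}{c}\right)^2$, so that proving $\nu^2=\frac{1}{m+1}$ amounts to showing $\lim_{\eta\to\infty}\rho=\frac{1}{m+1}$. The evolution is governed by $\rho'=2\rho(X_2-X_3)$, so the sign of $X_2-X_3$ controls the monotonicity of $\rho$. I would combine two ingredients already available under the present hypotheses. First, from the proof of Proposition \ref{prop_fixed ratio} — which establishes not merely the ratio but each $\frac{X_2}{\tilde R_2}\to 1$ and $\frac{X_3}{\tilde R_3}\to 1$ when $\mu=0$ — I get $X_2=\tilde R_2(1+o(1))$ and $X_3=\tilde R_3(1+o(1))$. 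Since $\rho$ is bounded we have $\tilde R_2,\tilde R_3=O(Y_2^2)$, hence
\[
X_2-X_3=(\tilde R_2-\tilde R_3)+o(Y_2^2).
\]
Second, from Proposition \ref{prop_infinite times analysis 1} the curve eventually lies in $\Psi$, so in particular $Y_2-(m+1)Y_3\geq 0$, i.e. $\rho\leq\frac{1}{m+1}$ for all large $\eta$.

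The key computation is the factorization of the leading term. A direct expansion gives $\tilde R_2-\tilde R_3=4Y_2^2+(4m+4)Y_3^2-(4m+8)Y_2Y_3$, and dividing by $Y_2^2$ yields the quadratic $(m+1)\rho^2-(m+2)\rho+1$, whose discriminant is $m^2$ and whose roots are exactly $\rho=1$ and $\rho=\frac{1}{m+1}$. Thus
\[
\tilde R_2-\tilde R_3=4(m+1)\,Y_2^2\,(\rho-1)\!\left(\rho-\tfrac{1}{m+1}\right).
\]
Because $X_2-X_3$ changes sign infinitely often, there is a sequence $\eta_k\to\infty$ with $(X_2-X_3)(\eta_k)=0$. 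Dividing the boxed identity by $Y_2^2(\eta_k)>0$ and using the error estimate gives $4(m+1)(\rho-1)\!\left(\rho-\frac{1}{m+1}\right)(\eta_k)\to 0$. Here the upper bound $\rho\leq\frac{1}{m+1}$ from $\Psi$ is essential: it forces $|\rho-1|\geq\frac{m}{m+1}$ to stay bounded away from zero, discarding the spurious root $\rho=1$, so the remaining factor must vanish in the limit and $\rho(\eta_k)\to\frac{1}{m+1}$.

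The final step upgrades convergence along the sign-change subsequence to a genuine limit. Between two consecutive sign changes $X_2-X_3$ keeps a fixed sign, so $\rho$ is monotone on each such interval and is therefore bounded below there by the smaller of its two endpoint values, both of which are sign-change values lying close to $\frac{1}{m+1}$. Concretely, given $\delta>0$ I would pick $N$ so that every sign-change point beyond $\eta_N$ has $\rho>\frac{1}{m+1}-\delta$; monotonicity then propagates $\rho>\frac{1}{m+1}-\delta$ across the intervening intervals, while $\rho\leq\frac{1}{m+1}$ bounds it from above. Letting $\delta\to 0$ gives $\lim_{\eta\to\infty}\rho=\frac{1}{m+1}$. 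I expect this last squeeze to be the main obstacle: the factorization automatically pins the limit of $\rho$ along the sign-change points, but promoting this to a true limit relies crucially on the monotonicity of $\rho$ between sign changes \emph{together with} the one-sided bound $\rho\leq\frac{1}{m+1}$ supplied by Proposition \ref{prop_infinite times analysis 1} — without that bound the two roots $\rho=1$ and $\rho=\frac{1}{m+1}$ could not be separated and the argument would fail.
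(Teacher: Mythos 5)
Your proof is correct and follows essentially the same route as the paper: both arguments use Proposition \ref{prop_infinite times analysis 1} to secure the one-sided bound $\frac{Y_3}{Y_2}\leq\frac{1}{m+1}$, use Proposition \ref{prop_fixed ratio} to force the quadratic $(m+1)\rho^2-(m+2)\rho+1$ (equivalently $\tilde{R}_2=\tilde{R}_3$) to vanish asymptotically at the points where $X_2-X_3=0$, discard the spurious root $\rho=1$ via that bound, and then use monotonicity of $\frac{Y_3}{Y_2}$ between consecutive critical points to upgrade subsequential convergence to a genuine limit. The only cosmetic differences are that you work with the factorized difference $\tilde{R}_2-\tilde{R}_3$ and an explicit squeeze, whereas the paper phrases the same computation as $\frac{\tilde{R}_3}{\tilde{R}_2}\to 1$ evaluated separately along local maxima and local minima of $\frac{Y_3}{Y_2}$.
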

\begin{proof}
Consider any sequence $\{a_k\}_{k=1}^\infty$ with $\lim\limits_{k\to\infty}a_k=\infty$ such that $\left(\frac{Y_3}{Y_2}\right)(a_k)$ is a local extremum for each $k$. By Proposition \ref{prop_infinite times analysis 1}, the integral curve eventually enters the set $\Psi$. We can assume $\left(\frac{Y_3}{Y_2}\right)(a_k)<\frac{1}{m+1}$ without loss of generality. By Proposition \ref{prop_fixed ratio}, we have  
$$
\lim\limits_{k\to\infty} \left(\frac{\tilde{R}_3}{\tilde{R}_2}\right)(a_k)=\lim\limits_{k\to\infty}\left(\frac{(4m+8)\frac{Y_3}{Y_2}-4\frac{Y_3^2}{Y_2^2}}{4 +4m \frac{Y_3^2}{Y_2^2}}\right)(a_k)=1.
$$
Since the rational function $y=\frac{(4m+8)x-4x^2}{4+4mx^2}$ is increasing on $\left[0,\frac{1}{m+1}\right]$, the computation above indicates that $\left\{\left(\frac{Y_3}{Y_2}\right)(a_k)\right\}_{k=1}^\infty$ as a sequence converges to $\frac{1}{m+1}$.

We proceed to show that $\frac{Y_3}{Y_2}$ converges to $\frac{1}{m+1}$ as a function. Suppose otherwise, there exists a $\delta>0$ and a sequence $\{c_k\}_{k=1}^\infty$ with $\lim\limits_{k\to\infty}c_k=\infty$ such that $\left(\frac{Y_3}{Y_2}\right)(c_k)\leq \frac{1}{m+1}-\delta$ for each $k$. Without loss of generality, each $c_k$ is not a local extremum. Let $\{b_k\}_{k=1}^\infty$ be the sequence where $b_k$ is the local minimum for $\frac{Y_3}{Y_2}$ that is closest to $c_k$. We have 
$$\left(\frac{Y_3}{Y_2}\right)(b_k)\leq \left(\frac{Y_3}{Y_2}\right)(c_k)\leq \frac{1}{m+1}-\delta,$$ which is a contradiction.
%
%
%
%
\end{proof}


By Proposition \ref{prop_points with Y1 nonzero}, Proposition \ref{prop_b/c limit exists} and Proposition \ref{prop_nu exists even mu=0}, we know that $(\mu^2, \nu^2)$ exists.  We are ready to describe all the possibilities of asymptotics for a steady Ricci soliton.
\begin{lemma}
\label{lem_possible asymptotics}
Possible values for $(\mu^2,\nu^2)$ are the following.
$$
(1,1),\quad (0,1),\quad \left(1,\frac{1}{2m+3}\right),\quad \left(0,\frac{1}{m+1}\right).
$$
\end{lemma}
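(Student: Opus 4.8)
The plan is to pin down the limit pair $(\mu^2,\nu^2)$ by studying \emph{how} the log-ratios $\log Y_1$ and $\log(Y_3/Y_2)$ converge, turning each into a polynomial equation whose roots are exactly the claimed values. By Proposition \ref{prop_points with Y1 nonzero}, Proposition \ref{prop_b/c limit exists} and Proposition \ref{prop_nu exists even mu=0} we already know $\mu=\lim Y_1$ and $\nu^2=\lim(Y_3/Y_2)$ exist, with $\mu\in[0,1]$ and $\nu^2\in[0,1]$ (the bounds coming from $Y_1\le1$ and $Y_2\ge Y_3$ in $\mathcal{A}$). I would treat $\mu>0$ and $\mu=0$ separately, since $R_1$ degenerates when $\mu=0$, which is precisely why Proposition \ref{prop_fixed ratio} works with $\tilde R_i$ rather than $R_i$.

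The technical backbone is the divergence $\int^{\infty} Y_2^2\,d\eta=\infty$. I would derive it from the recovery formulas \eqref{eqn_recovery_expanding}--\eqref{eqn_recovery_steady}, which give $b=\tilde W/Y_2$, together with the decay $X_2\sim R_2=O(Y_2^2)$: since $\dot b/b=X_2(\trace(L)-\dot f)$ by \eqref{eqn_new_variable_for_new_coordinate} and $\trace(L)-\dot f\to\sqrt{-C}$ by Corollary \ref{cor_Q to -1}, one gets $b\dot b=O(1)$, hence $b^2=O(t)$ and $\int Y_2^2\,d\eta\asymp\int b^{-2}\,dt=\infty$. This yields a rigidity mechanism that I will use twice: if a quantity $h$ converges to a finite limit and satisfies $h'=2Y_2^2[\Phi+o(1)]$ for a constant $\Phi$, then $\Phi=0$, for otherwise integrating against the divergent $\int Y_2^2\,d\eta$ sends $h\to\pm\infty$.

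For $\mu>0$ I would first upgrade Proposition \ref{prop_fixed ratio} to the non-degenerate ratios $X_i/R_i$. Each $R_i>0$ in $\mathcal{A}$, and $(\log R_i)'\to0$ because $\log R_i$ is a polynomial expression in the $Y_j$ whose logarithmic derivative is a bounded combination of $(\log Y_j)'$, all of which tend to $0$; hence $P_i=X_i/R_i$ obeys $P_i'=P_i(G-1-R_i'/R_i)+1$ with coefficient tending to $-1$, and Proposition \ref{prop_model_ode} gives $X_i/R_i\to1$. Then $(\log Y_1)'=X_1-X_2\sim R_1-R_2=4(Y_1^2-1)(Y_2^2+mY_3^2)$ and $(\log(Y_3/Y_2))'=2(X_2-X_3)\sim 2(R_2-R_3)$. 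The rigidity mechanism applied to the convergent $\log Y_1$ (finite since $\mu>0$) forces $4(\mu^2-1)(1+m\nu^4)=0$, hence $\mu^2=1$; applied to $\log(Y_3/Y_2)$ with $\mu=1$ it forces $(4m+6)\nu^4-(4m+8)\nu^2+2=0$, i.e. $(2m+3)\nu^4-(2m+4)\nu^2+1=0$, whose roots are $\nu^2\in\{1,\tfrac1{2m+3}\}$ (the discriminant being the perfect square $(2m+2)^2$).

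For $\mu=0$ I would invoke Proposition \ref{prop_fixed ratio} directly, so that $X_2-X_3\sim\tilde R_2-\tilde R_3$ and $(\log(Y_3/Y_2))'=2Y_2^2[(\tilde R_2-\tilde R_3)/Y_2^2+o(1)]$; the rigidity mechanism then gives $(4m+4)\nu^4-(4m+8)\nu^2+4=0$, i.e. $(m+1)\nu^4-(m+2)\nu^2+1=0$, with roots $\nu^2\in\{1,\tfrac1{m+1}\}$, recovering the value $\tfrac1{m+1}$ already isolated in Proposition \ref{prop_nu exists even mu=0}. Positivity $\nu>0$ in this case (needed so that $\log(Y_3/Y_2)$ has a finite limit) follows because $(\tilde R_2-\tilde R_3)/Y_2^2\to4>0$ as $Y_3/Y_2\to0$, so $Y_3/Y_2$ cannot decrease to $0$. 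Collecting the two cases produces exactly the four pairs $(1,1)$, $(0,1)$, $(1,\tfrac1{2m+3})$, $(0,\tfrac1{m+1})$. I expect the main obstacle to be the divergence estimate $\int^{\infty}Y_2^2\,d\eta=\infty$ and, relatedly, controlling the $o(Y_2^2)$ error in the identities $X_i\sim R_i$ precisely enough for the rigidity mechanism to apply; the algebra of the two quadratics is routine by comparison.
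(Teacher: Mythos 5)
Your proposal is correct, and it reaches the same two quadratics as the paper --- $(2m+3)\nu^4-(2m+4)\nu^2+1=0$ when $\mu^2=1$ and $(m+1)\nu^4-(m+2)\nu^2+1=0$ when $\mu=0$ --- but by a different mechanism. The paper also starts from Proposition \ref{prop_model_ode}, applied to the ratios $\frac{X_1Y_1^2}{Y_2^2}$, $\frac{X_2}{Y_2^2}$, $\frac{X_3}{Y_2Y_3}$; it then passes back to the $t$-coordinate, observes that the resulting limits of $a\dot a$, $b\dot b$, $c\dot c$ force $a^2,b^2,c^2$ to grow linearly in $t$, and extracts the consistency equations by applying L'H\^opital's rule to $\frac{a^2}{b^2}$ and $\frac{b^2}{c^2}$ (using $\lim\frac{X_1}{X_2}=\lim\frac{X_2}{X_3}=1$). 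You instead stay entirely in the $\eta$-coordinate: you upgrade Proposition \ref{prop_fixed ratio} to $X_i/R_i\to1$ (legitimate, since $(\log R_i)'\to0$ follows from the positivity bounds \eqref{eqn_Ricci_curvature_R2_R3} in $\mathcal{A}$), and then use the divergence $\int^\infty Y_2^2\,d\eta=\infty$ against the finite limits of $\log Y_1$ and $\log(Y_3/Y_2)$ to kill the leading coefficients of $R_1-R_2$ and $R_2-R_3$. Your derivation of the divergence is not circular --- $X_2/Y_2^2$ bounded comes from the ODE comparison alone, and then $b\dot b=O(1)$ gives $b^2=O(t)$ --- though it could be shortened: since $Y_2\to0$ one has $\int(X_2-G)\,d\eta=-\lim\log Y_2=\infty$, and $X_2\le (4m+5)Y_2^2$ eventually, so $\int Y_2^2\,d\eta=\infty$ directly. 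What your route buys is that it never needs the linear growth of $a^2,b^2,c^2$ or the L'H\^opital step (those facts are still needed later, in \eqref{eqn_paraboloid asymtptotics}, to identify the paraboloidal asymptotics, but not for this lemma); what the paper's route buys is that the growth rates \eqref{eqn_paraboloid asymtptotics}--\eqref{eqn_cigar paraboloid asymtptotics} come out as a by-product of the same computation. One small point to make explicit in your write-up: in the case $\mu>0$ the finiteness of $\log(Y_3/Y_2)$, i.e. $\nu>0$, is supplied by Proposition \ref{prop_b/c limit exists}, and in the case $\mu=0$ your observation that $(\tilde R_2-\tilde R_3)/Y_2^2\to4$ as $Y_3/Y_2\to0$ correctly rules out $\nu=0$.
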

\begin{proof}
Consider 
\begin{equation}
\begin{split}
\left(\frac{X_1Y_1^2}{Y_2^2}\right)'&=-(G+1-2X_1)\frac{X_1Y_1^2}{Y_2^2}+2 Y_1^4+4 m Y_1^4\frac{Y_3^2}{Y_2^2},\\
\left(\frac{X_2}{Y_2^2}\right)'&=-(G+1-2X_2)\frac{X_2}{Y_2^2}+4- 2 Y_1^2+4 m \frac{Y_3^2}{Y_2^2},\\
\left(\frac{X_3}{Y_2Y_3}\right)'&=-(G+1-2X_3)\frac{X_3}{Y_2Y_3}+(4m+8)-2 Y_1^2\frac{Y_3}{Y_2}-4 \frac{Y_3}{Y_2}.
\end{split}
\end{equation}
By Proposition \ref{prop_model_ode}, we have
\begin{equation}
\label{eqn_limit of X2/Y2^2}
\lim\limits_{\eta\to\infty}\frac{X_1Y_1^2}{Y_2^2}=2 \mu^4+4m \mu^4\nu^4,\quad \lim\limits_{\eta\to\infty}\frac{X_2}{Y_2^2}=4- 2 \mu^2+4m \nu^4,\quad  \lim\limits_{\eta\to\infty}\frac{X_3}{Y_2Y_3}=(4m+8)-2 \mu^2\nu^2-4\nu^2.
\end{equation}
Hence,
\begin{equation}
\begin{split}
\lim\limits_{t\to\infty}a\dot{a}&=\lim\limits_{\eta\to\infty}\frac{Q}{C}\frac{X_1Y_1^2}{Y_2^2}=\frac{1}{-C}\left(2 \mu^4+4 m \mu^4\nu^4\right),\\
\lim\limits_{t\to\infty}b\dot{b}&=\lim\limits_{\eta\to\infty}\frac{Q}{C}\frac{X_1Y_1^2}{Y_2^2}=\frac{1}{-C}\left(4-2 \mu^2+4 m \nu^4\right),\\
\lim\limits_{t\to\infty}c\dot{c}&=\lim\limits_{\eta\to\infty}\frac{Q}{C}\frac{X_1Y_1^2}{Y_2^2}=\frac{1}{-C}\left((4m+8)-2\mu^2\nu^2-4 \nu^2 \right).
\end{split}
\end{equation}
Suppose $\mu\neq 0$, then each limit above is positive. As $t\to \infty$ we have
\begin{equation}
\label{eqn_paraboloid asymtptotics}
\begin{split}
a^2&\sim \frac{2}{-C}\left(2 \mu^4+ 4m \mu^4\nu^4\right)t,\\
b^2&\sim  \frac{2}{-C}\left(4-2 \mu^2+4m \nu^4\right)t,\\
c^2&\sim   \frac{2}{-C}\left((4m+8)-2\mu^2\nu^2-4\nu^2\right)t.
\end{split}
\end{equation}
Hence, each principal curvature grows as fast as $\frac{1}{2t}$ as $t\to\infty$. 
Since $\frac{\dot{a}}{a}\sim\frac{\dot{b}}{b}\sim\frac{\dot{c}}{c}\sim\frac{1}{2t},$ it follows that $\lim\limits_{\eta\to\infty}\frac{X_1}{X_2}=\lim\limits_{\eta\to\infty}\frac{X_2}{X_3}=1$. By the L'H\^opital's rule, we have 

\begin{equation}
\begin{split}
\mu^2&=\lim\limits_{t\to\infty}\frac{a^2}{b^2}=\lim\limits_{t\to\infty}\frac{a\dot{a}}{b\dot{b}}
=\lim\limits_{\eta\to\infty}\frac{\frac{X_1Y_1^2}{Y_2^2}}{\frac{X_2}{Y_2^2}}=\frac{2 \mu^4+ 4 m \mu^4\nu^4}{4-2 \mu^2+4 m \nu^4},\\
\nu^2&=\lim\limits_{t\to\infty}\frac{b^2}{c^2}=\lim\limits_{t\to\infty}\frac{b\dot{b}}{c\dot{c}}
=\lim\limits_{\eta\to\infty}\frac{\frac{X_2}{Y_2^2}}{\frac{X_3}{Y_2Y_3}}=\frac{4-2 \mu^2+4m \nu^4}{(4m+8)-2 \mu^2\nu^2-4\nu^2}.
\end{split}
\end{equation}
Solving the equations above yields solutions 
$(1,1)$ and $\left(1,\frac{1}{2m+3}\right)$.

The case where $\mu=0$ does not introduce any difficulty. Limits for $b\dot{b}$ and $c\dot{c}$ are still positive. We have
\begin{equation}
\label{eqn_cigar paraboloid asymtptotics}
\begin{split}
b^2&\sim \frac{2}{-C}\left(4+4m \nu^4\right)t,\quad c^2\sim   \frac{2}{-C}\left((4m+8)-4\nu^2\right)t.
\end{split}
\end{equation}
The limit $\lim\limits_{\eta\to\infty}\frac{X_2}{X_3}=1$ still holds. Solving
\begin{equation}
\label{eqn_computing nu}
\nu^2=\lim\limits_{\eta\to\infty}\frac{\frac{X_2}{Y_2^2}}{\frac{X_3}{Y_2Y_3}}=\frac{4+4m \nu^4}{(4m+8)-4\nu^2}.
\end{equation}
yields solutions $(0,1)$ and $\left(0,\frac{1}{m+1}\right)$.
\end{proof}

By the analysis above, if $\mu\neq 0$, the Ricci soliton is asymptotically paraboloidal whose base is either the standard sphere ($(\mu^2,\nu^2)=\left(1,1\right)$) or the Jensen sphere ($(\mu^2,\nu^2)=\left(1,\frac{1}{2m+3}\right)$). If $\mu=0$, the metric component $a$ for the $\mathbb{S}^1$-fiber does not grow as fast as the other. More specifically, the function converges to a constant. 
%
%

\begin{proposition}
\label{prop_mu=0 is cigar paraboloid}
If $\mu=0$, the limit $\lim\limits_{t\to\infty} a$ exists.
\end{proposition}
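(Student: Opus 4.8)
The plan is to reduce the statement to an integrability claim in the $\eta$-variable and then exploit a telescoping identity that holds with no a priori control on $a$. Since $\frac{d}{d\eta}\log a = X_1$ (from $d\eta=(\trace(L)-\dot{f})\,dt$ and the definition of $X_1$), we have $a(\eta)=a(\eta_0)\exp\!\big(\int_{\eta_0}^{\eta} X_1\,d\tilde\eta\big)$, and by Proposition \ref{prop_etainfty is tinfty} the limit $t\to\infty$ is the limit $\eta\to\infty$. Thus it suffices to prove $\int_{\eta_0}^{\infty} X_1\,d\eta<\infty$, in which case $\lim a$ exists and is a positive constant. I would first reduce this to $\int^{\infty} R_1\,d\eta<\infty$: integrating the evolution equation $X_1'=X_1(G-1)+R_1$ and using $X_1\geq 0$ on $\mathcal{A}$, $X_1(\eta)\to 0$ (Proposition \ref{prop_points with Y1 nonzero}), and $1-G\geq \tfrac12$ eventually (since $G\to 0$), one obtains the identity $\int_{\eta_0}^{\infty} X_1(1-G)\,d\eta=\int_{\eta_0}^{\infty} R_1\,d\eta+X_1(\eta_0)$, so the two integrals converge together.

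The hard part is precisely $\int^{\infty} R_1\,d\eta<\infty$. Writing $R_1=Y_1^2(2Y_2^2+4mY_3^2)$, the factor $Y_2^2$ decays only like $1/t$ and is \emph{not} integrable in $\eta$, so merely invoking $Y_1\to 0$ fails; a naive bound would require exactly the control on $a$ that we are trying to establish. The device that breaks this circularity is the identity $(Y_1^2)'=-2Y_1^2(X_2-X_1)$ coming from $Y_1'=Y_1(X_1-X_2)$: since $Y_1^2$ decreases monotonically to $\mu^2=0$ (Proposition \ref{prop_points with Y1 nonzero}), integrating gives $\int_{\eta_0}^{\infty} Y_1^2(X_2-X_1)\,d\eta=\tfrac12 Y_1^2(\eta_0)<\infty$ unconditionally.

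To convert the weight $X_2$ appearing in $R_1$ into the integrable weight $X_2-X_1$, I would show $X_1/X_2\to 0$. Setting $P=X_1/X_2\geq 0$, the quotient rule together with \eqref{eqn_Polynomial_soliton_equation} gives $P'=-(R_2/X_2)P+R_1/X_2$; from \eqref{eqn_limit of X2/Y2^2} one reads off $R_2/X_2\to 1$ (the forcing term in the $X_2/Y_2^2$ equation is exactly $R_2/Y_2^2$), while $R_1/X_2\to 0$ because $Y_1\to 0$, so Proposition \ref{prop_model_ode} forces $P\to 0$. Hence $X_2-X_1\geq\tfrac12 X_2$ eventually. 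Combining this with $R_1\leq(2+4m)Y_1^2Y_2^2\leq C\,Y_1^2X_2$ (using $Y_3\leq Y_2$ and $Y_2^2\leq C'X_2$, valid since $X_2/Y_2^2\to 4+4m\nu^4>0$) yields $R_1\leq 2C\,Y_1^2(X_2-X_1)$ eventually, which is integrable by the telescoping identity of the previous paragraph. Therefore $\int^{\infty} R_1\,d\eta<\infty$, hence $\int^{\infty} X_1\,d\eta<\infty$, and $\lim_{t\to\infty} a$ exists.
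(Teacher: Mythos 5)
Your proof is correct, and it takes a genuinely different route from the paper's. The paper works in the $t$-variable and follows Appleton's scheme: it first extracts a quantitative decay rate $Y_1\leq K_0t^{-(1-\delta)/2}$ from the second-order equation for $Y_1$ via an integrating factor, feeds this into $\ddot{a}\leq -K_1\dot{a}+K_2t^{-2(1-\delta)}$, and integrates again to get $\dot{a}=O(t^{-2(1-\delta)})$, whence $a$ is bounded (and increasing, hence convergent). You instead work entirely in the $\eta$-variable and reduce the claim to $\int^\infty X_1\,d\eta<\infty$, which you obtain from $\int^\infty R_1\,d\eta<\infty$; the circularity in bounding $R_1=Y_1^2(2Y_2^2+4mY_3^2)$ (since $Y_2^2$ alone is not $\eta$-integrable) is broken by the exact identity $\int_{\eta_0}^\infty Y_1^2(X_2-X_1)\,d\eta=\tfrac12Y_1^2(\eta_0)$ coming from $(Y_1^2)'=-2Y_1^2(X_2-X_1)$ and $\mu=0$, together with $X_1/X_2\to 0$ (a clean application of Proposition \ref{prop_model_ode} to $P=X_1/X_2$, using $R_2/X_2\to1$ from \eqref{eqn_limit of X2/Y2^2}) and $Y_2^2\lesssim X_2$. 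All the individual steps check out: the quotient equation $P'=-(R_2/X_2)P+R_1/X_2$ is correct for $\epsilon=0$, positivity of $X_1,X_2$ along the flow justifies applying Proposition \ref{prop_model_ode}, and the nonnegativity of the integrands makes the "converge together" argument rigorous. What your argument buys is economy — only first-order identities and limits already established before this proposition — while the paper's argument buys a rate: it shows $a$ converges with $\dot{a}=O(t^{-2+\delta})$, whereas yours gives convergence without an explicit rate. Either suffices for the ACP conclusion drawn afterwards.
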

\begin{proof}
If $b\equiv c$, Lemma 6.9 from \cite{appleton_family_2017} directly applies. Appleton's approach can be generalized to the case where $b\not\equiv c$ without difficulties. For the sake of completeness, we present the full proof below.

By \eqref{eqn_cohomogeneity_one_soliton_equation}, we have 
\begin{equation}
\begin{split}
\ddot{Y_1}&=\dot{Y_1}\left(\dot{f}-3\frac{\dot{b}}{b}-4m\frac{\dot{c}}{c}\right)-Y_1(1-Y_1^2)\frac{1}{b^2}\left(4+4m\frac{b^4}{c^4}\right)
\end{split}
\end{equation}
Since the factor $\left(\dot{f}-3\frac{\dot{b}}{b}-4m\frac{\dot{c}}{c}\right)$ converges to $-\sqrt{-C}$, for any sufficiently small $\delta>0$, the inequality
\begin{equation}
\begin{split}
\ddot{Y_1}&\leq (-\sqrt{-C}+\delta)\dot{Y_1}-Y_1(1-Y_1^2)\frac{1}{b^2}\left(4+4m\frac{b^4}{c^4}\right)\\
\end{split}
\end{equation}
eventually holds. By \eqref{eqn_cigar paraboloid asymtptotics} and \eqref{eqn_computing nu}, the inequality
$$\frac{1}{b^2}\left(4+4m\frac{b^4}{c^4}\right)\geq \frac{\sqrt{-C}-\delta}{t}$$
eventually holds. Given that $\lim\limits_{\eta\to\infty} Y_1=0$, the inequality
\begin{equation}
\label{eqn_second deri of Y1}
\begin{split}
\ddot{Y_1}&\leq (-\sqrt{-C}+\delta)\dot{Y_1}-Y_1(1-Y_1^2)\frac{\sqrt{-C}-\delta}{t}\\
&\leq (-\sqrt{-C}+\delta)\dot{Y_1}-Y_1\frac{\sqrt{-C}-\delta}{2t}
\end{split}
\end{equation}
holds eventually. Multiply \eqref{eqn_second deri of Y1} by $e^{(\sqrt{-C}-\delta)t}$ and integrate, we obtain 
\begin{equation}
\begin{split}
\dot{Y_1}&\leq -e^{-(\sqrt{-C}-\delta)t}\int_{t_0}^t Y_1(\tilde{t})\frac{\sqrt{-C}-\delta}{2\tilde{t}} e^{(\sqrt{-C}-\delta)\tilde{t}} d\tilde{t}+\dot{Y_1}(t_0)e^{(\sqrt{-C}-\delta)(t_0-t)}.
\end{split}
\end{equation}
Since $Y_1$ is decreasing, the inequality 
\begin{equation}
\begin{split}
\dot{Y_1} &\leq -e^{-(\sqrt{-C}-\delta)t}\frac{Y_1}{t}\int_{t_0}^t \frac{\sqrt{-C}-\delta}{2} e^{(\sqrt{-C}-\delta)\tilde{t}} d\tilde{t}+\dot{Y_1}(t_0)e^{(\sqrt{-C}-\delta)(t_0-t)}\\
&= -\frac{Y_1}{2t}    +\left(\dot{Y_1}(t_0)+\frac{Y_1}{2t}\right)e^{(\sqrt{-C}-\delta)(t_0-t)}\\
&\leq -\frac{Y_1}{2t} \left(1-e^{(\sqrt{-C}-\delta)(t_0-t)}\right)
\end{split}
\end{equation}
eventually holds. As the second term converges to zero, the inequality 
$$Y_1\leq K_0 t^{-\frac{(1-\delta)}{2}}$$
eventually holds for some constant $K_0>0$. By \eqref{eqn_cohomogeneity_one_soliton_equation}, we have 
\begin{equation}
\label{eqn_second deri of a}
\begin{split}
\ddot{a}\leq \dot{a}\dot{f}+Y_1^4\left(\frac{2}{a}+\frac{4m}{a}\frac{b^4}{c^4}\right)\leq -K_1 \dot{a}+\frac{K_2}{t^{2(1-\delta)}}
\end{split}
\end{equation}
for some constants $K_1,K_2>0$. Multiplying \eqref{eqn_second deri of a} by $e^{K_1t}$ and integrating, we have 
$$
\dot{a}\leq K_2 e^{-K_1 t}\int_{t_0}^t \frac{e^{K_1\tilde{t}}}{\tilde{t}^{2(1-\delta)}} d\tilde{t}+\dot{a}(t_0)e^{K_1(t_0-t)}.
$$
By the L'H\^opital’s Rule, it follows that
$$
K_2 e^{-K_1t}\int_{t_0}^t \frac{e^{K_1\tilde{t}}}{\tilde{t}^{2(1-\delta)}} d\tilde{t}\sim \frac{K_2}{K_1t^{2(1-\delta)}}
$$
as $t\to \infty$. Therefore, the inequality $\dot{a}\leq \frac{K_3}{t^{2(1-\delta)}}$ holds eventually for some constant $K_3>0$. The function $a$ is bounded above. Since $X_1\geq 0$ in $\mathcal{A}$, the function $a$ is non-decreasing. Therefore, the limit $\lim\limits_{t\to\infty} a$ exists.
\end{proof}

Therefore, if $\mu=0$, the Ricci soliton is asymptotically a circle bundle with a constant radius over a paraboloid whose base is $\mathbb{CP}^{2m+1}$. If $\nu^2=1$, the base is the Fubini--Study $\mathbb{CP}^{2m+1}$. If $\nu^2=\frac{1}{m+1}$, the base is the homogeneous non-K\"ahler $\mathbb{CP}^{2m+1}$ in \cite{ziller_homogeneous_1982}.

\begin{proposition}
\label{prop_Y3/Y2<2 means ACP}
If $\frac{Y_3}{Y_2}<1$ initially along integral curves in Lemma \ref{lem_long existing_zeta_gamma}, then $\nu^2\neq 1$. If $Y_1<1$ initially along integral curves in Lemma \ref{lem_long existing_zeta_gamma}, then $\mu^2\neq 1$.
\end{proposition}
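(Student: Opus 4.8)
The second assertion follows from a monotonicity that is already built into $\mathcal{A}$. Along every curve in Lemma \ref{lem_long existing_zeta_gamma} we have $Y_1'=Y_1(X_1-X_2)\le 0$ on $\mathcal{A}$, because $X_1-X_2\le 0$ and $Y_1\ge 0$ there. Hence $Y_1$ is non-increasing, so $\mu=\lim_{\eta\to\infty}Y_1\le Y_1(\eta_0)<1$ whenever $Y_1<1$ initially, and therefore $\mu^2\ne 1$. (Recall $\mu=\lim Y_1$ exists by Proposition \ref{prop_points with Y1 nonzero}.) No further work is needed here.

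For the first assertion I work in the steady setting ($\epsilon=0$) of this subsection and write $\rho:=\frac{Y_3}{Y_2}\in(0,1]$, so that $\nu^2=\lim_{\eta\to\infty}\rho$ (this limit exists by Proposition \ref{prop_b/c limit exists} and Proposition \ref{prop_nu exists even mu=0}) and $\rho'=2\rho(X_2-X_3)$. First I would check that $\rho<1$ is preserved: if $\rho(\eta^*)=1$ at a finite $\eta^*$, then $Y_2=Y_3$ is a minimum of $Y_2-Y_3$ on the invariant set $\mathcal{A}$, forcing $(Y_2-Y_3)'(\eta^*)=2Y_3(X_3-X_2)(\eta^*)=0$, hence $X_2=X_3$ there; the point then lies in the invariant set $\mathcal{RS}_{\mathrm{FS}}=\{Y_2=Y_3,\ X_2=X_3\}$, so by uniqueness $\rho\equiv 1$, contradicting $\rho(\eta_0)<1$. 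Thus $D:=Y_2-Y_3>0$ throughout. I then argue by contradiction, assuming $\nu^2=1$, i.e. $\rho\to 1$. The key algebraic step is the factorization, obtained directly from \eqref{eqn_curvature function},
\[
R_2-R_3=D\,\mathcal{B},\qquad \mathcal{B}:=4\big(Y_2-(m+1)Y_3\big)-2Y_1^2(Y_2+Y_3),
\]
combined with $(X_2-X_3)'=(X_2-X_3)(G-1)+(R_2-R_3)$. Since $\mathcal{B}/Y_2\to-(4m+4\mu^2)<0$ as $\rho\to 1$, I may fix $\eta_1$ so large that $\mathcal{B}<0$, $G-1<-\tfrac12$, and the $X_i$ are small for $\eta>\eta_1$. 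On $\{X_2-X_3=0\}$ this gives $(X_2-X_3)'=D\mathcal{B}<0$, so $\{X_2\le X_3\}$ is forward invariant for $\eta>\eta_1$; consequently, if $X_2-X_3\le 0$ ever holds at some $\eta_2>\eta_1$, then $\rho$ decreases thereafter and $\nu^2\le\rho(\eta_2)<1$, a contradiction. It remains to exclude the case $X_2-X_3>0$ for all $\eta>\eta_1$, in which $\rho$ increases monotonically to $1$.

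This last case is the main obstacle, since at $\rho=1$ the leading-order part of $X_2-X_3$ vanishes and its naive sign is inconclusive. To resolve it I would pass to the normalized quantity $\theta:=\dfrac{X_2-X_3}{Y_2(Y_2-Y_3)}>0$, whose evolution I compute from \eqref{eqn_Polynomial_soliton_equation} to be
\[
\theta'=\theta\big(-G-1+2X_2\big)+\frac{\mathcal{B}}{Y_2}+2\rho\,Y_2^2\,\theta^2,\qquad \frac{\mathcal{B}}{Y_2}\longrightarrow-(4m+4\mu^2)<0.
\]
The Riccati nonlinearity is tamed by the defining inequality $2(Y_2-Y_3)+X_3-X_2\ge 0$ of $\mathcal{A}$, which gives $X_2-X_3\le 2(Y_2-Y_3)$, hence $Y_2\theta\le 2$ and $2\rho Y_2^2\theta^2\le 8$. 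Therefore $\theta'\le-\tfrac12\theta+C$ for large $\eta$, so $\theta$ is bounded; but then the nonlinear term tends to $0$ as $Y_2\to 0$, and eventually $\theta'\le-\tfrac12\theta-c<0$ for some $c>0$, forcing $\theta$ to decrease through $0$ in finite $\eta$, contradicting $\theta>0$. Hence $X_2-X_3>0$ cannot persist, and together with the previous paragraph we conclude $\nu^2\ne 1$. I expect the two essential inputs to be exactly the factorization of $R_2-R_3$, which exposes the repelling sign of the reduced $\rho$-dynamics at $\rho=1$, and the a priori bound $X_2-X_3\le 2(Y_2-Y_3)$ from $\mathcal{A}$, which controls the quadratic term in the $\theta$-equation.
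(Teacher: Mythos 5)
Your proof is correct, and the core of the first assertion is argued by a genuinely different route than the paper's. Both arguments share the same opening moves: assume $\nu^2=1$ for contradiction, use the factorization $R_2-R_3=(Y_2-Y_3)\bigl(4(Y_2-(m+1)Y_3)-2Y_1^2(Y_2+Y_3)\bigr)$ to see that $R_2-R_3$ is eventually negative, and conclude that $X_2-X_3$ must eventually be positive (your forward invariance of $\{X_2\le X_3\}$ is the same observation as the paper's ``if $X_2-X_3$ changes sign infinitely many times, so does $R_2-R_3$''). Where you diverge is in disposing of the remaining case $X_2-X_3>0$: the paper shows directly from \eqref{eqn_key_computation1} that the derivative of the constraint function $2(Y_2-Y_3)+X_3-X_2$ is eventually strictly positive, which is incompatible with that function being nonnegative on $\mathcal{A}$ and converging to $0$; you instead run a Riccati argument on $\theta=(X_2-X_3)/\bigl(Y_2(Y_2-Y_3)\bigr)$, invoking the same constraint only through the a priori bound $Y_2\theta\le 2$. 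Both routes ultimately lean on the defining inequality $2(Y_2-Y_3)+X_3-X_2\ge 0$ of $\mathcal{A}$; the paper's version is shorter, while yours isolates more explicitly why $\rho=Y_3/Y_2=1$ is repelling for the reduced dynamics. One small correction: the coefficients in your $\theta$-equation are slightly off --- the computation gives $\theta'=\theta(-G-1+X_2+X_3)+\mathcal{B}/Y_2+\theta^2\,Y_2(Y_2+Y_3)$ rather than $\theta(-G-1+2X_2)+\mathcal{B}/Y_2+2\rho Y_2^2\theta^2$ --- but the discrepancies are of order $X_2-X_3$ and $Y_2-Y_3$, and every estimate you use (linear coefficient tending to $-1$, quadratic term bounded by $8$ via $Y_2\theta\le 2$ and vanishing once $\theta$ is bounded) survives unchanged. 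The second assertion is handled exactly as in the paper, via the monotonicity of $Y_1$.
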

\begin{proof}
Suppose $\frac{Y_3}{Y_2}<1$ initially and $\lim\limits_{\eta\to\infty}\frac{Y_3}{Y_2}=1$.
By the derivative $(X_2-X_3)'$, if $X_2-X_3$ changes sign infinitely many times, so does $R_2-R_3$. Therefore, if $R_2-R_3$ has a sign eventually, then so does $X_2-X_3$. As $\frac{Y_3}{Y_2}$ converges to $1$, the function $R_2-R_3$ is eventually negative. Since $1\geq \frac{Y_3}{Y_2}$ and $\left(\frac{Y_3}{Y_2}\right)'=2\frac{Y_3}{Y_2}(X_2-X_3)$, the function $X_2-X_3$ is eventually positive.

By \eqref{eqn_key_computation1}, we have 
\begin{equation}
\label{eqn_from key_computation1}
\begin{split}
&\left(2(Y_2-Y_3)+X_3-X_2\right)'\\
&\geq  \left(2(Y_2-Y_3)+X_3-X_2\right)\left(G-1+4 Y_3\right) +2(Y_2-Y_3)\left(1-X_2+2(m-1) Y_3-2 Y_2\right).\\
\end{split}
\end{equation}

By Proposition \ref{prop_points with Y1 nonzero}, we know that $G-1+4Y_3$ is eventually negative. Then \eqref{eqn_from key_computation1} is eventually larger than or equal to
\begin{equation}
\label{eqn_from key_computation3}
\begin{split}
&2(Y_2-Y_3)\left(G-1+4 Y_3\right) +2(Y_2-Y_3)\left(1-X_2+2(m-1) Y_3-2 Y_2\right)\\
&=2(Y_2-Y_3)\left(G-\frac{X_2}{Y_2^2}Y_2^2+ 2(m+1)Y_3-2Y_2\right).
\end{split}
\end{equation}
Based on our assumption that $\nu=1$, the inequality $2(m+1)Y_3-2Y_2\geq Y_2$ eventually holds. Moreover, by \eqref{eqn_limit of X2/Y2^2}, the inequality $\frac{X_2}{Y_2^2}\leq 4m+4$ eventually holds. Hence, the above computation is eventually larger than or equal to
\begin{equation}
\label{eqn_from key_computation4}
\begin{split}
2(Y_2-Y_3)\left( -\frac{X_2}{Y_2^2}Y_2^2+Y_2\right)&\geq 2(Y_2-Y_3)\left( -(4m+4)Y_2^2+Y_2\right)\\
&= 2(Y_2-Y_3)Y_2(1-(4m+4)Y_2).
\end{split}
\end{equation}
By Proposition \ref{prop_points with Y1 nonzero}, the derivative $\left(2(Y_2-Y_3)+X_3-X_2\right)'$ is eventually positive. As the function $2(Y_2-Y_3)+X_3-X_2$ converges to zero and the integral curve remains in the invariant set $\mathcal{A}$ by Lemma \ref{lem_invariant_setA}, we reach a contradiction. Thus, the first statement holds.

Since $Y_1$ is non-increasing along $\zeta(s_1,s_2,0,s_4)$ and $\gamma(s_1,s_2,0,s_4)$, the second statement holds. The proof is complete.
\end{proof}

\begin{lemma}
\label{lem_asymp for epsilon=0}
The asymptotics for integral curves with $s_3=0$ in Lemma \ref{lem_long existing_zeta_gamma} are listed below.
\end{lemma}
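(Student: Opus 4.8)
The plan is to read off the initial signs of $1-Y_1$ and $1-\frac{Y_3}{Y_2}$ from the linearized solutions at the two critical points, feed these signs into the rigidity statements of Proposition \ref{prop_Y3/Y2<2 means ACP}, and thereby pin down $(\mu^2,\nu^2)$ uniquely among the four candidates of Lemma \ref{lem_possible asymptotics}. The passage from $(\mu^2,\nu^2)$ to the geometric description then follows from \eqref{eqn_paraboloid asymtptotics}, \eqref{eqn_cigar paraboloid asymtptotics} and Proposition \ref{prop_mu=0 is cigar paraboloid}.

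First I would treat the family $\zeta(s_1,s_2,0,s_4)$. From the expansion \eqref{eqn_linearized_zeta2} with the eigenvectors \eqref{eqn_unstable_eigen_zeta}, the coordinate $Y_3$ emanates from $0$ with leading coefficient $3s_1>0$, so $\frac{Y_3}{Y_2}<1$ initially; Proposition \ref{prop_Y3/Y2<2 means ACP} then forces $\nu^2\ne 1$ for every such curve. For the fiber direction, the $Y_1$-entry of $u_2$ equals $-9$, so $s_2>0$ gives $Y_1<1$ initially and hence $\mu^2\ne 1$, whereas $s_2=0$ keeps the curve in $\mathcal{RS}_{\text{round}}$, where $Y_1\equiv 1$ and thus $\mu=1$. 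Comparing with Lemma \ref{lem_possible asymptotics}, the case $s_2=0$ gives $(\mu^2,\nu^2)=(1,\tfrac{1}{2m+3})$ and the case $s_2>0$ gives $(\mu^2,\nu^2)=(0,\tfrac{1}{m+1})$.

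The family $\gamma(s_1,s_2,0,s_4)$ is handled identically using $v_1,v_2,v_4$ at $P_\bullet$. Here $Y_2-Y_3$ emanates from $0$ with leading coefficient $(4m+3)s_1$, so $s_1>0$ gives $\frac{Y_3}{Y_2}<1$ initially and $\nu^2\ne 1$, while $s_1=0$ traps the curve in $\mathcal{RS}_{\text{FS}}$, where $Y_2\equiv Y_3$ and $\nu=1$; the $Y_1$-entry of $v_2$ is $-(4m+3)^2$, so $s_2>0$ forces $\mu^2\ne 1$ and $s_2=0$ keeps $\mu=1$ through $\mathcal{RS}_{\text{round}}$. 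This separates the four subcases $(s_1=0,s_2=0)\mapsto(1,1)$, $(s_1>0,s_2=0)\mapsto(1,\tfrac{1}{2m+3})$, $(s_1=0,s_2>0)\mapsto(0,1)$, and $(s_1>0,s_2>0)\mapsto(0,\tfrac{1}{m+1})$. Finally I would translate each pair: when $\mu^2=1$ all three metric components grow linearly by \eqref{eqn_paraboloid asymtptotics}, giving an AP soliton whose base is the standard sphere if $\nu^2=1$ and the Jensen $\mathbb{S}^{4m+3}$ if $\nu^2=\tfrac{1}{2m+3}$; when $\mu^2=0$ the $\mathbb{S}^1$-fiber stays bounded by Proposition \ref{prop_mu=0 is cigar paraboloid}, giving an ACP soliton over a paraboloid with base the Fubini--Study $\mathbb{CP}^{2m+1}$ if $\nu^2=1$ and the non-K\"ahler $\mathbb{CP}^{2m+1}$ if $\nu^2=\tfrac{1}{m+1}$.

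The main obstacle is not any single computation but ensuring that each initial sign is exactly the hypothesis required by Proposition \ref{prop_Y3/Y2<2 means ACP} and that the resulting $(\mu^2,\nu^2)$ is compatible with only one entry of Lemma \ref{lem_possible asymptotics}. The genuinely delicate analysis -- showing that a putative limit $\nu^2=1$ would make $\bigl(2(Y_2-Y_3)+X_3-X_2\bigr)'$ eventually positive and contradict convergence to the origin -- has already been carried out in Proposition \ref{prop_Y3/Y2<2 means ACP}, so the remaining work is the sign bookkeeping from the linearizations and the recording of the squashing ratios.
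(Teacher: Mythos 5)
Your proposal is correct and takes essentially the same route as the paper: the paper's own proof likewise combines the four candidate values of $(\mu^2,\nu^2)$ from Lemma \ref{lem_possible asymptotics} with Proposition \ref{prop_Y3/Y2<2 means ACP}, the monotonicity of $Y_1$, and membership in the invariant sets $\mathcal{RS}_{\text{round}}$ and $\mathcal{RS}_{\text{FS}}$ to pin down each case. Your version simply spells out the sign bookkeeping from the unstable eigenvectors that the paper leaves implicit.
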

\begin{table}[H]
\centering
  \begin{tabular}{l l l}
   Integral curves &  Asymptotics & Base  \\
\hline
\hline
   $\zeta(s_1,0,0,s_4)$ & AP & Jensen $\mathbb{S}^{4m+3}$\\ 
   $\zeta(s_1,s_2,0,s_4)$ & ACP & non-K\"ahler $\mathbb{CP}^{2m+1}$\\ 
       \hline
   $\gamma(0,0,0,1)$ & AP & standard $\mathbb{S}^{4m+3}$\\ 
   $\gamma(s_1,0,0,s_4)$ & AP & Jensen $\mathbb{S}^{4m+3}$\\ 
   $\gamma(0,s_2,0,s_4)$ & ACP & Fubini--Study $\mathbb{CP}^{2m+1}$\\ 
   $\gamma(s_1,s_2,0,s_4)$ & ACP & non-K\"ahler $\mathbb{CP}^{2m+1}$\\ 
       \hline
  \end{tabular}
  \caption{}
  \label{tab_asymp1}
\end{table}
\begin{proof}
The four possible asymptotics for $\gamma$ correspond to the four possible values for $(\mu^2,\nu^2)$ in Lemma \ref{lem_possible asymptotics}. With Proposition \ref{prop_Y3/Y2<2 means ACP} and the monotonicity of $Y_1^2$, the asymptotics are clear depending on whether the integral curve $\gamma$ stays in $\mathcal{RS}_{\text{round}}$ or $\mathcal{R}_{\text{FS}}$. The proof for $\zeta$ is similar. As we set $s_1>0$ for $\zeta$ so that $Y_3$ is positive, there are only two possible asymptotics for $\zeta$ depending on whether or not the integral curve stays in $\mathcal{RS}_{\text{round}}$. 
\end{proof}

\subsection{Expanding Ricci solitons}
In this section, we follow \cite[Proposition 2.20]{wink_complete_2021} to prove the asymptotic for expanding Ricci solitons. 

\begin{proposition}
\label{prop_limit of X/W}
Along each integral curve in Lemma \ref{lem_long existing_zeta_gamma} with $s_3, s_4>0$, each function $\frac{X_i}{W}$ converges to $\frac{\epsilon}{2}$. 
\end{proposition}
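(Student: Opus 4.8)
The plan is to work with the rescaled curvatures $Z_i:=\frac{X_i}{W}$ and to reduce the statement to an application of Proposition~\ref{prop_model_ode}. Since $s_3,s_4>0$ we are in the expanding case $\epsilon=1$, and $W>0$ throughout. Differentiating $Z_i$ along \eqref{eqn_Polynomial_soliton_equation} and simplifying with the $W$-equation $W'=2W(G-\tfrac{\epsilon}{2}W)$, I would first establish
\[
Z_i'=-Z_i\left(1+G-\tfrac{\epsilon}{2}W\right)+\frac{R_i}{W}+\frac{\epsilon}{2}.
\]
By Proposition~\ref{prop_points with Y1 nonzero} the curve converges to $(0,0,0,\mu,0,0,0)$, so $G\to0$ and $W\to0$; hence the coefficient $-(1+G-\tfrac{\epsilon}{2}W)$ tends to $-1$. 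A uniform barrier argument then gives $\liminf Z_i>0$ for every $i$: dropping the nonnegative term $\frac{R_i}{W}$ and using that the coefficient is eventually below $\tfrac{3}{2}$, one has $Z_i'\ge-\tfrac{3}{2}Z_i+\tfrac{\epsilon}{2}$, so $Z_i$ cannot drop below $\tfrac{\epsilon}{3}$. In particular each $Z_i$ is eventually positive, which is exactly the hypothesis needed for Proposition~\ref{prop_model_ode}; the only remaining task is to show the forcing term $\frac{R_i}{W}$ converges to $0$.

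The crux is therefore $\frac{R_i}{W}\to0$, and the genuine difficulty is that this is an indeterminate $0/0$ ratio, so mere convergence of the phase point is insufficient and decay-rate information about $W$ is needed. Using \eqref{eqn_curvature function}, each $\frac{R_i}{W}$ is a combination, with coefficients bounded in $\mathcal{A}$, of $\frac{Y_2^2}{W}$, $\frac{Y_3^2}{W}$ and $\frac{Y_2Y_3}{W}$; since $0\le Y_3\le Y_2$ on $\mathcal{A}$, the last two are dominated by $\frac{Y_2^2}{W}$, so it suffices to prove $\frac{Y_2^2}{W}\to0$. A direct computation yields the multiplicative identity $\left(\frac{Y_2^2}{W}\right)'=-2X_2\frac{Y_2^2}{W}$. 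Combining this with the lower bound from the previous paragraph, $X_2\ge\delta W$ for some $\delta>0$ and all large $\eta$, gives $\left(\frac{Y_2^2}{W}\right)'\le-2\delta W\frac{Y_2^2}{W}$, hence $\frac{Y_2^2}{W}(\eta)\le\frac{Y_2^2}{W}(\eta_0)\exp\!\left(-2\delta\int_{\eta_0}^{\eta}W\,d\tilde\eta\right)$.

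Everything thus rests on the divergence $\int^{\infty}W\,d\eta=\infty$, which I expect to be the key step. Although it looks like it might demand delicate $t$-asymptotics of $\trace(L)-\dot f$, it is in fact immediate from the $W$-equation: since $G\ge0$ and $\epsilon=1$, we have $W'=2WG-\epsilon W^2\ge-\epsilon W^2$, whence $(W^{-1})'\le\epsilon$. Thus $W^{-1}$ grows at most linearly in $\eta$, so $W\gtrsim\eta^{-1}$ for large $\eta$ and $\int^{\infty}W\,d\eta=\infty$. Consequently $\frac{Y_2^2}{W}\to0$, and therefore $\frac{R_i}{W}\to0$. Applying Proposition~\ref{prop_model_ode} with $X=-(1+G-\tfrac{\epsilon}{2}W)\to-1$ and $U=\frac{R_i}{W}+\frac{\epsilon}{2}\to\frac{\epsilon}{2}\ge0$ to the positive function $Z_i$ yields $Z_i\to\frac{\epsilon}{2}$, completing the argument. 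The main obstacle is the pair of ideas that resolve the $0/0$ ratio: reducing $\frac{R_i}{W}$ to the single monotone quantity $\frac{Y_2^2}{W}$ and extracting $\int^{\infty}W\,d\eta=\infty$ from the differential inequality for $W^{-1}$.
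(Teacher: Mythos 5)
Your proof is correct, and it takes a genuinely different route at the one point where the argument is delicate. Both you and the paper start from the same equation $\left(\frac{X_i}{W}\right)'=\frac{X_i}{W}\left(-G+\frac{\epsilon}{2}W-1\right)+\frac{R_i}{W}+\frac{\epsilon}{2}$, both use the identity $\left(\frac{Y_2^2}{W}\right)'=-2X_2\frac{Y_2^2}{W}$, and both finish with Proposition \ref{prop_model_ode}; the difference is how the indeterminate ratio $\frac{R_i}{W}$ is controlled. The paper keeps two cases alive: it observes that $\frac{Y_2^2}{W}$, $\frac{Y_2Y_3}{W}$, $\frac{Y_2^2}{Y_1^2W}$ are monotone and hence convergent, applies Proposition \ref{prop_model_ode} to get convergence of $\frac{X_i}{W}$ in either case, and when $\lim\frac{Y_2^2}{W}>0$ it rules out a limit $\frac{\epsilon}{2}+\delta$ with $\delta\neq 0$ by showing this would contradict L'H\^opital's rule applied to $\frac{(Y_2^2)'}{W'}$ (using $\lim\frac{G}{W}=0$). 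You instead eliminate that case outright: the barrier $Z_i'\geq-\tfrac{3}{2}Z_i+\tfrac{\epsilon}{2}$ gives a uniform bound $X_2\geq\delta W$, the elementary inequality $(W^{-1})'\leq\epsilon$ forces $W\gtrsim\eta^{-1}$ and hence $\int^\infty W\,d\eta=\infty$, and Gr\"onwall then yields $\frac{Y_2^2}{W}\to 0$ with an explicit decay factor $\exp\left(-2\delta\int W\right)$, after which a single application of Proposition \ref{prop_model_ode} suffices. Your version buys a quantitative statement and avoids the case split and the L'H\^opital step; the paper's version is shorter and never needs the lower bound on $W$. Two minor remarks: the positivity of $Z_i$ required by Proposition \ref{prop_model_ode} already follows from $X_i,W>0$ along these curves, so the barrier is really only needed for the quantitative bound $X_2\geq \delta W$; and your argument implicitly uses $\epsilon=1$, which is legitimate here because $s_3>0$ places you in the expanding case.
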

\begin{proof}
By Proposition \ref{prop_points with Y1 nonzero}, all coordinate functions except $Y_1$ converge to zero as $\eta\to \infty$. From the derivatives
\begin{equation}
\begin{split}
\left(\frac{Y_2^2}{Y_1^2W}\right)'=-2\frac{Y_2^2}{Y_1^2W}X_1,\quad \left(\frac{Y_2^2}{W}\right)'=-2\frac{Y_2^2}{W}X_2,\quad \left(\frac{Y_2Y_3}{W}\right)'=-2\frac{Y_2Y_3}{W}X_3,
\end{split}
\end{equation}
positive functions $\frac{Y_2^2}{Y_1^2W}$, $\frac{Y_2^2}{W}$ and $\frac{Y_2Y_3}{W}$ monotonically decrease and hence converge. 

Consider
\begin{equation}
\left(\frac{X_i}{W}\right)'=\frac{X_i}{W}\left(-G+\frac{\epsilon}{2}W-1\right)+\frac{R_i}{W}+\frac{\epsilon}{2}.
\end{equation}
If $\lim\limits_{\eta\to\infty}\frac{Y_2^2}{W}=0$, each $\frac{R_i}{W}$ converges to zero as $\frac{Y_3}{Y_2}$ is bounded. Therefore, each $\frac{X_i}{W}$ converges to $\frac{\epsilon}{2}$ by Proposition \ref{prop_model_ode}. If $\frac{Y_2^2}{W}$ converges to some positive number, then $\frac{Y_3}{Y_2}=\frac{Y_2Y_3}{W}\frac{W}{Y_2^2}$ also converges. Thus, each $\frac{R_i}{W}$ and each $\frac{X_i}{W}$ converges by Proposition \ref{prop_model_ode}. We need to show that $\lim\limits_{\eta\to\infty}\frac{X_i}{W}=\frac{\epsilon}{2}$ for this case.

Since each $X_i$ converges to zero, we must have 
$$\lim\limits_{\eta\to\infty} \frac{G}{W}=\lim\limits_{\eta\to\infty}\left(X_1\frac{X_1}{W}+2X_2\frac{X_2}{W}+4mX_3\frac{X_3}{W}\right)=0.$$
Assume $\frac{X_i}{W}$ converges to some $\frac{\epsilon}{2}+\delta$ with $\delta\neq 0$. Take $i=2$, for example. We have 
$$
\lim\limits_{\eta\to\infty}\frac{(Y_2^2)'}{W'}=\lim\limits_{\eta\to\infty}\frac{Y_2^2(G-\frac{\epsilon}{2}W-X_2)}{W(G-\frac{\epsilon}{2}W)}=\lim\limits_{\eta\to\infty}\frac{Y_2^2\left(\frac{G}{W}-\frac{\epsilon}{2}-\frac{X_2}{W}\right)}{W\left(\frac{G}{W}-\frac{\epsilon}{2}\right)}=\lim\limits_{\eta\to\infty}\frac{Y_2^2}{W}\frac{\epsilon+2\delta}{\epsilon}.
$$
The above equation violates the L'H\^opital's rule. Thus, the limit must be $\frac{\epsilon}{2}$. 
\end{proof}

\begin{lemma}
\label{lem_asymp for epsilon=1}
Non-Einstein expanding Ricci solitons represented by $\zeta(s_1,s_2,s_3,s_4)$ and $\gamma(s_1,s_2,s_3,s_4)$ in Lemma \ref{lem_long existing_zeta_gamma} are AC.
\end{lemma}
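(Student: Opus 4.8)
The plan is to show that along each of these expanding ($\epsilon=1$) curves the rescaled metric components $a/t$, $b/t$, $c/t$ converge to positive constants; the soliton is then AC, with asymptotic cone $dt^2+t^2 g_N$ where $g_N=\alpha^2 Q|_{\mathbf 1}+\beta^2 Q|_{\mathbf 2}+\gamma^2 Q|_{\mathbf{4m}}$ and $(\alpha,\beta,\gamma)=\lim(a/t,b/t,c/t)$. The inputs are Proposition \ref{prop_limit of X/W} (each $X_i/W\to\frac{\epsilon}{2}=\frac12$), Proposition \ref{prop_points with Y1 nonzero} (all coordinates except $Y_1$ vanish, $Y_1\to\mu$), and the recovery formulas \eqref{eqn_recovery_expanding}, through which $\frac{\dot a}{a}=\frac{X_1}{\sqrt W}$, $\frac{\dot b}{b}=\frac{X_2}{\sqrt W}$, $\frac{\dot c}{c}=\frac{X_3}{\sqrt W}$.

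First I would pin down the growth of $\phi:=\trace(L)-\dot f$ in $t$. From $W=\phi^{-2}$, $\frac{dt}{d\eta}=\sqrt W$ and the $W$-equation in \eqref{eqn_Polynomial_soliton_equation} one computes $\dot\phi=\frac{\epsilon}{2}-\frac{G}{W}$. Since $\frac GW=X_1\frac{X_1}{W}+2X_2\frac{X_2}{W}+4mX_3\frac{X_3}{W}\to0$ (each $X_i\to0$ while $X_i/W$ stays bounded), we get $\dot\phi\to\frac{\epsilon}{2}$, hence $\phi\sim\frac{\epsilon}{2}t$, $W\sim\frac{4}{\epsilon^2 t^2}$, $\sqrt W\sim\frac2t$. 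Thus $\frac{\dot b}{b}=\sqrt W\,\frac{X_2}{W}\sim\frac1t$, and likewise for $a,c$; in particular each logarithmic derivative is $\ge\frac1{4t}$ eventually, so $\int^\infty\frac{\dot b}{b}\,dt=\infty$ forces $b\to\infty$, whence $c\to\infty$ as well. This already shows the orbit curvature decays like $t^{-2}$, the cone rate, and it places the curve in the branch $Y_2^2/W=1/b^2\to0$ of the proof of Proposition \ref{prop_limit of X/W}, so that every $R_i/W\to0$.

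The decisive step is to upgrade these leading orders to genuine limits of $a/t$, $b/t$, $c/t$. For $b$ this is the integrability of $\frac{\dot b}{b}-\frac1t=\big(\sqrt W\,Z_2\big)+\big(\frac12\sqrt W-\frac1t\big)$, where $Z_i:=\frac{X_i}{W}-\frac\epsilon2$. The second bracket is harmless: since $X_i=O(t^{-2})$ and $X_i/W$ is bounded, $\frac GW=O(t^{-2})$, so $\int^\infty\frac GW\,dt<\infty$, and integrating $\dot\phi=\frac\epsilon2-\frac GW$ shows $\phi-\frac\epsilon2 t$ converges, giving $\frac12\sqrt W-\frac1t=O(t^{-2})$. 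For the first bracket any polynomial decay $Z_2=O(t^{-\sigma})$, $\sigma>0$, suffices, because then $\sqrt W\,Z_2=O(t^{-1-\sigma})$ is integrable. Such a rate comes from the linear equation $Z_i'=-Z_i+(Z_i+\frac\epsilon2)(-G+\frac\epsilon2 W)+\frac{R_i}{W}$ (derived from \eqref{eqn_Polynomial_soliton_equation}), whose coefficient contracts at rate $-1$ in $\eta$: feeding the crude bound $b\gtrsim t^{1/4}$ (from $\frac{\dot b}{b}\ge\frac1{4t}$) into $\frac{R_i}{W}=O(b^{-2})=O(t^{-1/2})$ and applying the integrating factor $e^{\eta}$ gives $Z_i=O(t^{-1/2})$. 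Hence $\frac{\dot b}{b}-\frac1t$ is integrable, $\log(b/t)$ converges, and $b/t\to\beta>0$; the same computation yields $a/t\to\alpha>0$ and $c/t\to\gamma>0$. Writing $h(t)=a^2Q|_{\mathbf1}+b^2Q|_{\mathbf2}+c^2Q|_{\mathbf{4m}}$, we get $\frac1{t^2}h(t)\to\alpha^2Q|_{\mathbf1}+\beta^2Q|_{\mathbf2}+\gamma^2Q|_{\mathbf{4m}}$, so the soliton is AC with the homogeneous cone over the squashed sphere $\mathsf G/\mathsf K$; in particular $\mu=\alpha/\beta>0$ and $\beta/\gamma>0$.

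The main obstacle is precisely this upgrade. Proposition \ref{prop_limit of X/W} delivers only the qualitative limits $X_i/W\to\frac\epsilon2$, which fix the leading-order logarithmic derivatives but leave $\frac{\dot a}{a}-\frac1t$ merely $o(1/t)$, a priori non-integrable; the real content is extracting a quantitative (any polynomial) decay rate for $Z_i$ from the contracting linear ODE. The one delicate point is that the forcing term $R_i/W$ is controlled only through the growth of $b$, so one must first record the crude bound $b\gtrsim t^{1/4}$ and then bootstrap. The remaining parameter configurations present no new difficulty: curves with $s_1=0$ or $s_2=0$ lie in $\mathcal{RS}_{\mathrm{FS}}$ or $\mathcal{RS}_{\mathrm{round}}$, where $b\equiv c$ or $a\equiv b$ simply forces $\beta=\gamma$ or $\alpha=\beta$, and the same scheme applies.
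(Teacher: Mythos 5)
Your argument is correct, and its first half is exactly the paper's proof: the paper integrates $W'=2W^2\left(\frac{G}{W}-\frac{\epsilon}{2}\right)$ to get $\lim W\eta=\frac{1}{\epsilon}$, hence $\eta\sim\frac{\epsilon}{4}t^2$ and $\frac{\dot a}{a},\frac{\dot b}{b},\frac{\dot c}{c}\sim\frac{1}{t}$ via Proposition \ref{prop_limit of X/W}, and stops there, declaring the conical asymptotic established; your computation $\dot\phi=\frac{\epsilon}{2}-\frac{G}{W}$ for $\phi=\trace(L)-\dot f$ is the same leading-order analysis phrased in the $t$-variable. Where you genuinely go beyond the written proof is the second half. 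You correctly observe that $\frac{\dot a}{a}\sim\frac{1}{t}$ alone does not force $a/t$ to converge (it is consistent with $a=t\log t$), whereas convergence of $a/t,b/t,c/t$ to positive constants is what the Gromov--Hausdorff notion of an asymptotic cone actually requires. Your bootstrap supplies this: the crude bound $b\gtrsim t^{1/4}$ controls the forcing $\frac{R_i}{W}=O(b^{-2})$ (using $\frac{Y_2^2}{W}=\frac{1}{b^2}$, $Y_3\leq Y_2$, $Y_1\leq 1$), the contracting equation $Z_i'=-Z_i+\cdots$ then yields a polynomial decay rate for $Z_i=\frac{X_i}{W}-\frac{\epsilon}{2}$, and together with the convergence of $\phi-\frac{\epsilon}{2}t$ (from $\frac{G}{W}=O(t^{-2})$ being integrable) this makes $\frac{\dot b}{b}-\frac{1}{t}$ integrable. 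I checked the derivation of the $Z_i$ equation against the formula for $\left(\frac{X_i}{W}\right)'$ in Proposition \ref{prop_limit of X/W} and the bounds on $\frac{R_i}{W}$; everything is consistent, and as a bonus your argument shows $\mu=\alpha/\beta>0$ for every expanding member of the family. In short, your proof is a completion and strengthening of the paper's rather than a different route, and the extra step is not cosmetic: it is the part that actually delivers the AC conclusion.
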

\begin{proof}
Since $W'=2W\left(G-\frac{\epsilon}{2}W\right)=2W^2\left(\frac{G}{W}-\frac{\epsilon}{2}\right)$ and $\lim\limits_{\eta\to\infty} \frac{G}{W}=0$, for any small enough $\delta>0$, there exists $\eta_*$ such that $\eta>\eta_*$ implies $2W^2\left(-\delta-\frac{\epsilon}{2}\right)\leq W'\leq 2W^2\left(\delta-\frac{\epsilon}{2}\right)$. By integrating, we obtain
$$\frac{W(\eta_*)}{1+W(\eta_*)(\epsilon+2\delta)(\eta-\eta_*)}\leq W\leq \frac{W(\eta_*)}{1+W(\eta_*)(\epsilon-2\delta)(\eta-\eta_*)}.$$
Therefore, we conclude that $\lim\limits_{\eta\to\infty} W\eta=\frac{1}{\epsilon}$. By Proposition \ref{prop_limit of X/W}, we have  
\begin{equation}
\label{eqn_limit of X/W 2}
\frac{X_i}{\sqrt{W}}\sim \frac{\epsilon}{2}\sqrt{W}\sim \frac{1}{2}\sqrt{\frac{\epsilon}{\eta}}
\end{equation}
as $\eta\to \infty$.
As $dt=\sqrt{W}d\eta$, it is clear that $\eta\sim\frac{\epsilon}{4}t^2$. It follows that
$
\frac{\dot{a}}{a},\frac{\dot{b}}{b},\frac{\dot{c}}{c}\sim \frac{1}{t}
$
as $t\to\infty$. The conical asymptotic is established.
\end{proof}

Lemma \ref{lem_long existing_zeta_gamma}, Lemma \ref{lem_asymp for epsilon=0} and Lemma \ref{lem_asymp for epsilon=1} prove Theorem \ref{thm_1} and Theorem \ref{thm_2}.

\section{Ricci solitons on $\mathbb{O}^2$}
If the group triple is 
$$
(\mathsf{K},\mathsf{H},\mathsf{G})=(\spin(7),\spin(8),\spin(9)),
$$
the principal orbit is the total space of the octonionic Hopf fibration.
The associated cohomogeneity one equation is analogous to the subsystem of \eqref{eqn_cohomogeneity_one_soliton_equation} with $a=b$. Specifically, the cohomogeneity one equation is
\begin{equation}
\label{eqn_cohomogeneity_one_soliton_equation (2)}
\begin{split}
\frac{\ddot{b}}{b}-\left(\frac{\dot{b}}{b}\right)^2&=-\left(7\frac{\dot{b}}{b}+8\frac{\dot{c}}{c}\right)\frac{\dot{b}}{b}+\frac{6}{b^2}+8\frac{b^2}{c^4}+\frac{\dot{b}}{b}\dot{f}+\frac{\epsilon}{2},\\
\frac{\ddot{c}}{c}-\left(\frac{\dot{c}}{c}\right)^2&=-\left(7\frac{\dot{b}}{b}+8\frac{\dot{c}}{c}\right)\frac{\dot{c}}{c}+\frac{28}{c^2}-14\frac{b^2}{c^4}+\frac{\dot{c}}{c}\dot{f}+\frac{\epsilon}{2},\\
\ddot{f}+\frac{\epsilon}{2}&=7\frac{\ddot{b}}{b}+8\frac{\ddot{c}}{c},
\end{split}
\end{equation}
where $b$ and $c$ are metric components for the $\mathbb{S}^7$-fiber and the base space $\mathbb{OP}^1$, respectively. The initial condition where the principal orbit collapses to $\mathbb{OP}^1$ is 
\begin{equation}
\label{eqn_OP1}
\lim\limits_{t\to 0}(b,c,\dot{b},\dot{c})=(0,c_0,1,0),\quad c_0>0.
\end{equation}
If $\mathbb{S}^{15}$ fully collapses, the initial condition is
\begin{equation}
\label{eqn_S15 collapse}
\lim\limits_{t\to 0}(b,c,\dot{b},\dot{c})=(0,0,1,1)
\end{equation}
with a free parameter from $(\dddot{b}-\dddot{c})(0)$.

Apply the coordinate change as in \eqref{eqn_new_variable_for_new_coordinate}, we have 
\begin{equation}
\label{eqn_S15 cohomo1 equation}
\begin{bmatrix}
X_2\\
X_3\\
Y_2\\
Y_3\\
W
\end{bmatrix}'=V(X_2,X_3,Y_2,Y_3,W)=\begin{bmatrix}
X_2\left(G-\frac{\epsilon}{2}W-1\right)+R_2+\frac{\epsilon}{2}W\\
X_3\left(G-\frac{\epsilon}{2}W-1\right)+R_3+\frac{\epsilon}{2}W\\
Y_2\left(G-\frac{\epsilon}{2}W-X_2\right)\\
Y_3\left(G-\frac{\epsilon}{2}W+X_2-2X_3\right)\\
2W\left(G-\frac{\epsilon}{2}W\right)
\end{bmatrix}
\end{equation}
where
\begin{equation}
\begin{split}
R_2&=6Y_2^2+8Y_3^2,\quad R_3=28Y_2Y_3-14Y_3^2,\quad R_s=7R_2+8R_3,\\
H&=7X_2+8X_3,\quad G=7X_2^2+8X_3^2.
\end{split}
\end{equation}
Define $Q:=G+R_s+7\epsilon W-1$. The dynamical system \eqref{eqn_S15 cohomo1 equation} on the invariant set
$$
\{Q\leq 0\}\cap\{H\leq 1\}\cap \{W\geq 0\}\cap \{Y_2,Y_3\geq 0\}
$$
is analogous to \eqref{eqn_Polynomial_soliton_equation} restricted on $\mathcal{RS}_{\text{round}}$. Initial conditions \eqref{eqn_OP1} and \eqref{eqn_S15 collapse} are respectively transformed to critical points
\begin{equation}
\label{eqn_linearized_tildegamma}
\left(\frac{1}{7},0,\frac{1}{7},0,0\right),\quad \left(\frac{1}{15},\frac{1}{15},\frac{1}{15},\frac{1}{15},0\right).
\end{equation}

Define
\begin{equation}
\label{eqn_settildeA}
\begin{split}
\tilde{\mathcal{A}}&:=\mathcal{RS}\cap\{Y_2-Y_3\geq 0\}\cap \left\{2(Y_2-Y_3)+X_3-X_2\geq 0\right\} \cap \{X_2,X_3\geq 0\}.
\end{split}
\end{equation}

\begin{lemma}
The set $\tilde{\mathcal{A}}$ is compact and invariant.
\end{lemma}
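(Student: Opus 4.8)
The plan is to follow verbatim the three-step template already used for the quaternionic set $\mathcal{A}$: a compactness estimate as in Proposition~\ref{prop_Ais compact}, a technical boundary inequality as in Proposition~\ref{prop_technical_1}, and a face-by-face inward-pointing check as in Lemma~\ref{lem_invariant_setA}. The simplification here is that $\tilde{\mathcal{A}}$ carries no $Y_1$ coordinate: it is the octonionic analogue of $\mathcal{A}\cap\mathcal{RS}_{\text{round}}$, where $Y_1\equiv 1$ and $X_1=X_2$. Consequently every estimate below is the specialization of its quaternionic counterpart to $Y_1=1$, with the structure constants of the octonionic fibration. Since the invariance of the ambient region $\mathcal{RS}$ is the analogue of Proposition~\ref{prop_phase_space_for_Ricci_solitons} (and the faces $\{Y_2=0\},\{Y_3=0\}$ are invariant because $Y_2',Y_3'$ carry the factors $Y_2,Y_3$), only the four additional bounding faces of $\tilde{\mathcal{A}}$ require attention.

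For compactness I would argue exactly as in Proposition~\ref{prop_Ais compact}. From $Q\le 0$ and $W\ge 0$ one gets $1\ge Q+1=G+R_s+7\epsilon W\ge R_s$. Writing $R_s=42Y_2^2-56Y_3^2+224Y_2Y_3$ and using $Y_2\ge Y_3\ge 0$ gives $R_s\ge 42Y_2^2$, so $Y_2$ (hence $Y_3$) is bounded; the bound $G\le 1$ then bounds $X_2,X_3$, and $W$ is bounded for $\epsilon>0$ (for $\epsilon=0$ the relevant trajectories lie in the invariant slice $\{W=0\}$).

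Invariance is checked on the four faces. On $\{X_2=0\}$ and $\{X_3=0\}$ it suffices that $R_2,R_3\ge 0$ there, and indeed $R_2=6Y_2^2+8Y_3^2\ge 0$ while $R_3=14Y_3(2Y_2-Y_3)\ge 0$ by $Y_2\ge Y_3\ge 0$. On $\{Y_2-Y_3=0\}$ I would use
\begin{equation*}
(Y_2-Y_3)'=(Y_2-Y_3)\left(G-\frac{\epsilon}{2}W-X_2\right)+2Y_3(X_3-X_2)
\end{equation*}
together with the defining inequality $X_3-X_2\ge 2(Y_3-Y_2)$ to conclude $(Y_2-Y_3)'\ge 0$ on that face. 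The genuinely non-trivial face, and the main obstacle, is $\{2(Y_2-Y_3)+X_3-X_2=0\}$. Here a computation analogous to the one behind \eqref{eqn_key_computation1} yields the identity
\begin{equation*}
\left(2(Y_2-Y_3)+X_3-X_2\right)'=\left(2(Y_2-Y_3)+X_3-X_2\right)\left(G-\frac{\epsilon}{2}W-1+4Y_3\right)+(Y_2-Y_3)\left(2-2X_2-6Y_2+14Y_3\right),
\end{equation*}
so that on the face the sign is governed by $(Y_2-Y_3)(1-X_2-3Y_2+7Y_3)$. It then remains to prove the octonionic analogue of Proposition~\ref{prop_technical_1}, namely that $1-X_2-3Y_2+7Y_3\ge 0$ on $\tilde{\mathcal{A}}\cap\{2(Y_2-Y_3)+X_3-X_2=0\}$.

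For that inequality I would mimic the estimate in Proposition~\ref{prop_technical_1}: on the face $X_2=X_3+2(Y_2-Y_3)\ge 2(Y_2-Y_3)$, so $Q\le 0$ gives $1\ge 7X_2^2+R_s\ge 28(Y_2-Y_3)^2+R_s=70Y_2^2+168Y_2Y_3-28Y_3^2\ge 70Y_2^2$, whence $Y_2\le 1/\sqrt{70}$. Combined with $X_2\le 1/7$ (from $H\le 1$ and $X_3\ge 0$) and $Y_3\ge 0$, this forces $1-X_2-3Y_2+7Y_3\ge \frac{6}{7}-\frac{3}{\sqrt{70}}>0$. With all four faces handled and $\tilde{\mathcal{A}}$ compact, the escape lemma yields positive invariance, completing the proof. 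I expect the only real bookkeeping to be in verifying the displayed identity and this one technical inequality; everything else is a direct transcription of the quaternionic argument at $Y_1=1$.
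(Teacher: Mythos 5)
Your proposal is correct and follows essentially the same route as the paper: the same compactness estimate via $1\ge R_s\ge 42Y_2^2$, the same face-by-face inward-pointing check, and the same key identity on the face $\{2(Y_2-Y_3)+X_3-X_2=0\}$, reducing everything to the positivity of $1-X_2-3Y_2+7Y_3$. The only (immaterial) difference is that you bound $Y_2\le 1/\sqrt{70}$ by keeping the $7X_2^2\ge 28(Y_2-Y_3)^2$ term, whereas the paper uses the slightly cruder but sufficient bound $Y_2^2\le 1/42<4/49$.
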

\begin{proof}
As $Y_2$ is bounded above, all other variables are bounded above by $Y_2\geq Y_3$ and $Q\leq 0$. Thus, the set $\tilde{\mathcal{A}}$ is compact.

Since $Y_2-Y_3\geq 0$, we have $R_3\geq 0$. Therefore, the vector field $V$ on each face $\tilde{\mathcal{A}}\cap \{X_i=0\}$ points inward. The computation for the face $\tilde{\mathcal{A}}\cap \{Y_2-Y_3=0\}$ is similar to \eqref{eqn_key_computation1}. We focus on showing that $V$ restricted on $\tilde{\mathcal{A}}\cap \{2(Y_2-Y_3)+X_3-X_2= 0\}$ points inward.

From $H\leq 1$, it is clear that $X_2\leq \frac{1}{7}$ and $X_3\leq \frac{1}{8}$ hold in $\tilde{\mathcal{A}}$. We have 
\begin{equation}
\begin{split}
&(2(Y_2-Y_3)+X_3-X_2)'\\
&=(2(Y_2-Y_3)+X_3-X_2)\left(G-\frac{\epsilon}{2}W-1+4Y_3\right)+2(Y_2-Y_3)(1-X_2-3Y_2+7Y_3)\\
&\geq (2(Y_2-Y_3)+X_3-X_2)\left(G-\frac{\epsilon}{2}W-1+4Y_3\right)+2(Y_2-Y_3)\left(\frac{6}{7}-3Y_2+7Y_3\right).
\end{split}
\end{equation}
By $Q\leq 0$, we have $1\geq 42 Y_2^2$ on $\tilde{\mathcal{A}}\cap \{2(Y_2-Y_3)+X_3-X_2=0\}$. Since $\frac{4}{49}>\frac{1}{42}\geq Y_2^2$, we conclude that $\frac{6}{7}-3Y_2+7Y_3>0$ on $\tilde{\mathcal{A}}\cap \{2(Y_2-Y_3)+X_3-X_2=0\}$. Therefore, the vector field $\tilde{V}$ restricted on all faces of $\tilde{\mathcal{A}}$ points inward. The set $\tilde{\mathcal{A}}$ is invariant.
\end{proof}

By the linearization of the critical point $\left(\frac{1}{7},0,\frac{1}{7},0,0\right)$, there exists a 2-parameter family of integral curves in $\tilde{\mathcal{A}}$. The Wink solitons on $\mathbb{OP}^2\backslash\{*\}$ are recovered.

On the other hand, there exists a 2-parameter family of integral curves, denoted as $\tilde{\gamma}(s_1,s_3,s_4)$, that emanate the critical point $\left(\frac{1}{15},\frac{1}{15},\frac{1}{15},\frac{1}{15},0\right)$. The $\tilde{\gamma}(s_1,s_3,s_4)$ family is analogous to the $\gamma(s_1,0,s_3,s_4)$ family, with each parameter sharing the same geometric meaning. In particular, the third order free parameter in \eqref{eqn_S15 collapse} is non-zero if $s_1>0$. The asymptotic analysis is simpler for this case. As the $\mathbb{S}^7$-fiber must be round, Proposition \ref{prop_b/c limit exists} carries over, and the limit $\nu$ must exist. We either have $\nu=1$ that gives the standard $\mathbb{S}^{15}$, or $\nu=\frac{3}{11}$ that gives the Bourguignon--Karcher $\mathbb{S}^{15}$. Proposition \ref{prop_Y3/Y2<2 means ACP} is easily generalized. Theorem \ref{thm_3} is established.

\section{Solitons with positive sectional curvatures}
In this section, we consider the subsystem restricted to $\mathcal{RS}_{\text{steady}}\cap \mathcal{RS}_{\text{round}}$, where $X_1=X_2$ and $Y_1=1$. The sectional curvature consists of six components. In the new coordinates, sectional curvatures are expressed as polynomials of $(X_i,Y_i)$. Specifically, we have sectional curvatures for:
\begin{enumerate}
\item
radial--normal 2-planes
$$K_{t2}=X_2-X_2^2-2Y_2^2-4mY_3^2,\quad K_{t3}=X_3-X_3^2-(4m+8)Y_2Y_3+6 Y_3^2,$$
\item
fiber--fiber 2-planes
$$K_{22}=Y_2^2-X_2^2$$
\item
fiber--horizontal 2-planes
$$K_{23}=Y_3^2-X_2X_3$$
\item
horizontal--horizontal 2-planes
$$K_{33}^\mathbb{H}=4Y_2Y_3-3Y_3^2-X_3^2,\quad K_{33}^\mathbb{R}=Y_2Y_3-X_3^2$$
\end{enumerate}
Since the Bryant soliton has positive sectional curvature, the integral curve $\gamma(0,0,0,1)$ is contained in the interior of the following set
$$
\mathcal{K}:=\mathcal{RS}_{\text{steady}}\cap \mathcal{RS}_{\text{round}}\cap \{K_{t2},K_{t3},K_{22},K_{33}, K_{33}^\mathbb{H}, K_{33}^\mathbb{R}>0\}.
$$
\begin{proposition}
\label{prop_forward invariant set}
There exists a sufficiently large $\eta_+$ and a sufficiently small $\delta_1>0$, such that for each $s_1\in [0,\delta_1)$ the integral curve $\gamma(s_1,0,0,\sqrt{1-s_1^2})$ enters $\mathcal{K}$ and remains in the set for $\eta\geq \eta_+$.
\end{proposition}
\begin{proof}
By \eqref{eqn_limit of X2/Y2^2} and Proposition \ref{prop_points with Y1 nonzero}, for each $s_1\in [0,1)$ there exists a corresponding $\eta_{s_1}^+$ such that 
\begin{enumerate}
\item
$K_{22},K_{33}, K_{33}^\mathbb{H}, K_{33}^\mathbb{R}>0$
\item
$1+16m\frac{Y_3^2}{X_2}\frac{X_3}{X_2}-16m\frac{Y_3^2}{X_2}-\frac{G}{X_2}>0$
\item
$1+12\frac{Y_3^2}{X_3}\frac{X_2}{X_3}-12\frac{Y_3^2}{X_3}-\frac{G}{X_3}>0$
\end{enumerate}
along $\gamma(s_1,0,0,\sqrt{1-s_1^2})$ for $\eta\geq \eta_{s_1}^+$. Without loss of generality, make $\eta_{s_1}^+$ a continuous function of $s_1$.

Define $\eta_+=\max\limits_{s_1\in\left[0,\frac{1}{2}\right]} \{\eta_{s_1}^+\}$. Since $\gamma(0,0,0,1)$ is contained in $\mathcal{K}$, there exists a sufficiently small $\delta_1\in\left(0,\frac{1}{2}\right)$ such that $K_{t2},K_{t3}>0$ at $(\gamma(s_1,0,0,\sqrt{1-s_1^2}))(\eta_+)$ for $s_1\in [0,\delta_1)$. Since
\begin{equation}
\label{eqn_Kti}
\begin{split}
K_{t2}'&=K_{t2}(2G-1)+X_2^2\left(1+16m\frac{Y_3^2}{X_2}\frac{X_3}{X_2}-16m\frac{Y_3^2}{X_2}-\frac{G}{X_2}\right),\\
K_{t3}'&=K_{t3}(2G-1)+X_3^2\left(1+12\frac{Y_3^2}{X_3}\frac{X_2}{X_3}-12\frac{Y_3^2}{X_3}-\frac{G}{X_3}\right),
\end{split}
\end{equation}
the signs of $K_{t2}$ and $K_{t3}$ do not change along $\gamma(s_1,0,0,\sqrt{1-s_1^2})$ for $\eta\geq \eta_+$. Therefore, each $\gamma(s_1,0,0,\sqrt{1-s_1^2})$ with $s_1\in[0,\delta_1)$ enters $\mathcal{K}$ and remains in the set for $\eta\geq \eta_+$.
\end{proof}

\begin{proposition}
\label{prop_backward invariant set}
There exists a sufficiently small $\eta_-$ such that for each $s_1\in \left[0,\frac{1}{\sqrt{85}}\right]$ the integral curve $\gamma(s_1,0,0,\sqrt{1-s_1^2})$ is in $\mathcal{K}$ for $\eta\leq \eta_-$.
\end{proposition}
\begin{proof}
Each $\gamma(s_1,0,0,\sqrt{1-s_1^2})$ is initially in $\mathcal{K}$ if $s_1\in\left[0,\frac{2}{\sqrt{85}}\right)$ by \eqref{eqn_linearized gamma negative einstein}. Hence, for each $s_1\in \left[0,\frac{2}{\sqrt{85}}\right)$, there exists a corresponding $\eta_{s_1}^-$ such that $\gamma(s_1,0,0,\sqrt{1-s_1^2})$ is in $\mathcal{K}$ for $\eta \leq \eta_{s_1}^-$.  Without loss of generality, make $\eta_{s_1}^-$ a continuous function of $s_1$. The proof is complete by defining $\eta_-=\min\limits_{s_1\in\left[0,\frac{1}{\sqrt{85}}\right]} \{\eta_{s_1}^-\}$.
\end{proof}

\begin{lemma}
\label{lem_positive sectional curvature assemble}
There exists a sufficiently small $\delta>0$ such that each $\gamma(s_1,0,0,\sqrt{1-s_1^2})$ with $s_1\in[0,\delta)$ is contained in $\mathcal{K}$.
\end{lemma}
\begin{proof}
Fix $\eta_+$ as in the proof of Proposition \ref{prop_forward invariant set} and $\eta^-$ as in Proposition \ref{prop_backward invariant set}. The graph $(\gamma(0,0,0,1))([\eta_-,\eta_+])$ is a compact subset of $\mathcal{K}$. There exists an open (w.r.t. $\mathcal{RS}_{\text{steady}}\cap \mathcal{RS}_{\text{round}}$) $\epsilon$-tube $\mathcal{T}_\epsilon\subset\mathcal{K}$ around $(\gamma(0,0,0,1))([\eta_-,\eta_+])$. By the continuous dependence, there exists a sufficiently small $\delta_2>0$ such that each $\gamma(s_1,0,0,\sqrt{1-s_1^2})$ with $s_1\in[0,\delta_2)$ remains in $\mathcal{T}_\epsilon$ for $\eta\in[\eta_-,\eta_+]$. Let $\delta=\min\left\{\delta_1,\delta_2,\frac{1}{\sqrt{85}}\right\}$, then each $\gamma(s_1,0,0,\sqrt{1-s_1^2})$ with $s_1\in[0,\delta)$ is contained in $\mathcal{K}$.
\end{proof}

The above argument can be carried over to the $\mathbb{O}^2$ case. Theorem \ref{thm_4} is therefore established.

\section{Visual Summary}
We use Grapher to make the following figures. By $s_4=\sqrt{1-(s_1^2+s_2^2+s_3)}$, we use points in the $s_1s_2s_3$-space to represent Ricci solitons on $\mathbb{HP}^{m+1}\backslash\{*\}$ and $\mathbb{H}^{m+1}$. 
\begin{figure}[H]
  \centering
  \includegraphics[width=1\linewidth]{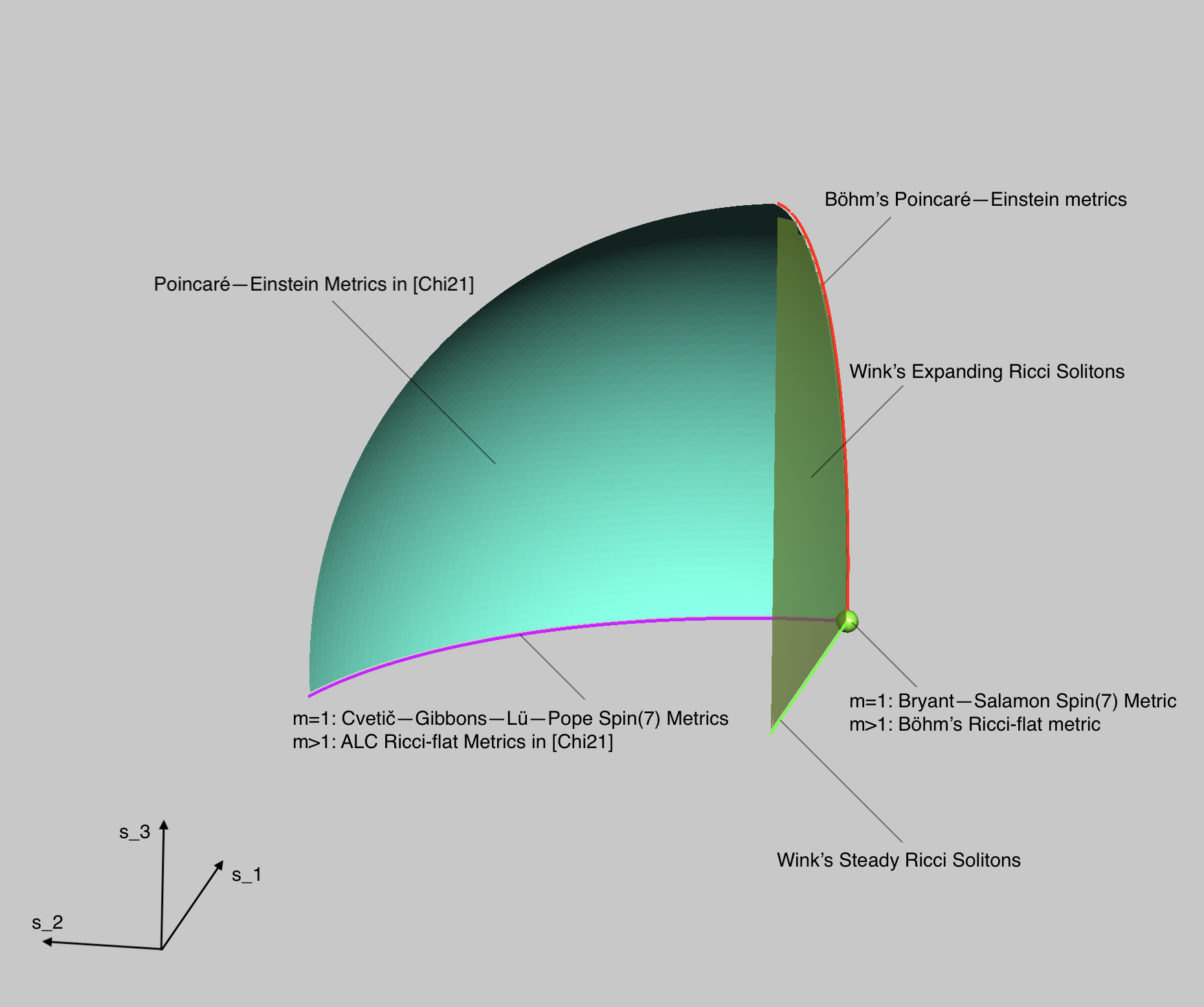}
\caption{Ricci solitons on $\mathbb{HP}^{m+1}\backslash\{*\}$}
\label{fig_1}
\end{figure}
\begin{figure}[H]
  \centering
  \includegraphics[width=1\linewidth]{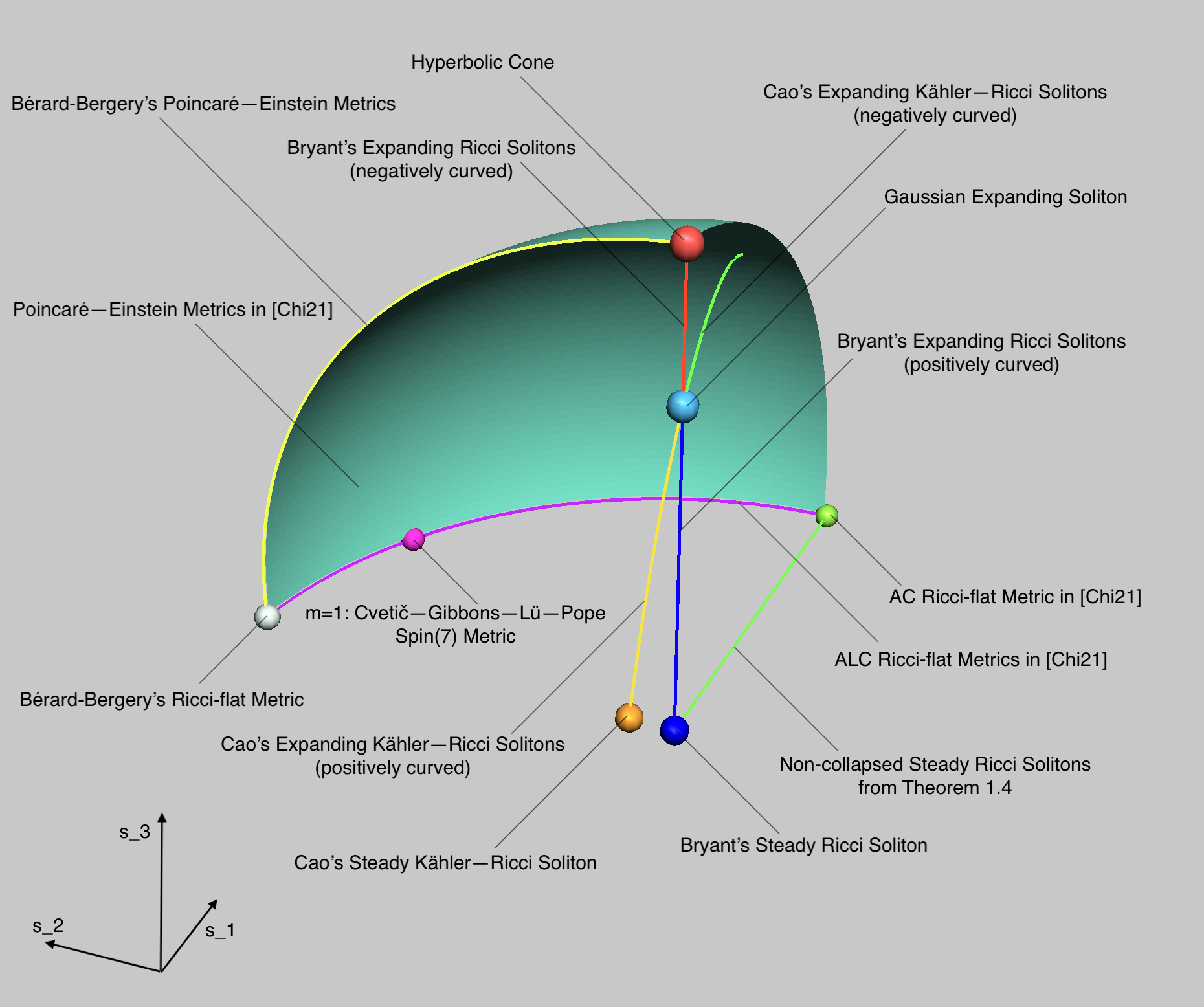}
\caption{Ricci solitons on $\mathbb{H}^{m+1}$}
\label{fig_2}
\end{figure}

\section*{Acknowledgements}
The author is grateful to McKenzie Wang for valuable discussions on this project. The author thanks Xiping Zhu for suggesting this topic, which extends the existing research on cohomogeneity one Einstein metrics. The author also thanks Huaidong Cao for his comments on Kähler--Ricci solitons.

\bibliography{Bibliography}
\bibliographystyle{alpha}

\end{document}